\tikzset{>=stealth',
  head/.style = {fill = white, text=black},
  plaque/.style = {draw, rectangle, minimum size = 10mm}, 
  pil/.style={->,thick},
  junct/.style = {draw,circle,inner sep=0.5pt,outer sep=0pt, fill=black}
  }
\newtheorem{theorem}{Theorem}[section]
\newtheorem{lemma}[theorem]{Lemma}
\newtheorem{claim}[theorem]{Claim}
\newtheorem{proposition}[theorem]{Proposition}
\newtheorem{corollary}[theorem]{Corollary}
\theoremstyle{definition}
\newtheorem{construction}[theorem]{Construction}
\newenvironment{example}
  {\pushQED{\qed}\examplex}
  {\popQED\endexamplex}
\theoremstyle{remark}
\newtheorem{remark}[theorem]{Remark}
\numberwithin{equation}{section}
  \newcommand{\Plus}{\mathord{\begin{tikzpicture}[baseline=0ex, line width=1.2, scale=0.13]
\draw (1,0) -- (1,2);
\draw (0,1) -- (2,1);
\end{tikzpicture}}}
\newcommand\mydef[1]{{\bf #1}}
\newcommand \mdCR[1]{{\tt md}(#1)}
\newcommand \avoid[1]{{\sf avoid}_{321}(#1)}
\newcommand \perm{{\sf perm}}
\newcommand \dTab{{\sf dTab}}
\newcommand{\maxexcited}{D_{\tt top}}
\newcommand{\zip}{D_{\tt zip}}
\newcommand{\Kzip}{D_{\tt zip}^{K}}
\newcommand{\SEelb}{
\raisebox{-0.2em}{  \begin{tikzpicture}[scale=0.25]
\begin{scope}[scale=0.6,thick]
\draw[line width = .25ex, blue] (1.5,5.5) -- (1.5,7)--(1.5,5.5)-- (3,5.5);
\draw (0,4) -- (0,7)--(3,7)--(3,4)--(0,4);
\end{scope}
\end{tikzpicture}}
}
\newcommand{\blank}{
 \raisebox{-0.2em}{  \begin{tikzpicture}[scale=0.25]
\begin{scope}[scale=0.6,thick]
\draw 
(0,4) -- (0,7)--(3,7)--(3,4)--(0,4);
\end{scope}
\end{tikzpicture}}
}
\newcommand{\GL}{{\rm GL}}
\newcommand{\dem}{\delta}
\newcommand{\Pipes}{{\sf Pipes}}
\newcommand{\KPipes}{\overline{\sf Pipes}}
\newcommand{\skewexcited}{{\sf SEYD}}
\newcommand{\nilp}{{\sf NILP}}
\newcommand{\elb}{{\sf elbows}}
\newcommand{\blk}{{\sf blanks}}
\newcommand\WB[1]{{{#1}_{W}}}
 \newcommand\SB[1]{{{#1}_{S}}}
  \newcommand\Tab[1]{{{\sf T}}_{#1}}
   \newcommand\Sh[1]{{\sf S}_{#1}}
   \newcommand\CM[1]{{\tt Zip}{#1}}
\newcommand{\kskewexcited}{\overline{\sf SEYD}}
\newcommand{\wt}{{\tt wt}}
\newcommand\gap{{\tt gap}}
\newcommand\codePD[1]{{\mathcal{R}_{#1}}}
\newcommand{\Sym}{S}
 \newcommand\Pos[1]{{\tt Pos}{(#1)}}
 \newcommand{\reg}{{\rm reg}}
\newcommand{\UD}{{lower decomposable}}
\newcommand{\UDity}{lower decomposability}
\begin{document}

\title[Regularity of $321$-avoiding Kazhdan--Lusztig varieties]{Castelnuovo--Mumford regularity for $321$-avoiding Kazhdan--Lusztig varieties}

\author{Colleen Robichaux}
\address[CR]{
Dept.~of Mathematics,
University of California, Los Angeles,
Los Angeles, CA  \newline \indent 90095, USA}
\email{robichaux@ucla.edu}
\thanks{Colleen Robichaux was supported by the NSF GRFP No. DGE 1746047, NSF RTG No. DMS 1937241, and NSF MSPRF No. DMS 2302279.}

\date{\today}

\begin{abstract}
 We prove the Castelnuovo--Mumford regularity of 321-avoiding Kazhdan--Lusztig varieties can be computed combinatorially in terms of $K$-theoretic skew excited Young diagrams. We present an algorithm which gives a lower bound for this regularity and describe a setting in which this algorithm provides precise regularity computations. This algorithm specializes to compute the regularity of all two-sided mixed ladder determinantal varieties.
\end{abstract}

\maketitle

\section{Introduction}
\emph{Kazhdan--Lusztig varieties} are generalized determinantal varieties introduced by A.~Woo and A.~Yong \cite{WooYongSings} to study singularities of \emph{Schubert varieties}. \emph{Matrix Schubert varieties}, introduced by W.~Fulton \cite{Fulton.Flags}, and \emph{ladder determinantal varieties}, introduced by S.~S.~Abhyankar \cite{Abhyankar}, are well-studied families of Kazhdan--Lusztig varieties, see \cite{Conca,HT,KM}. Kazhdan--Lusztig varieties indexed by {$321$-avoiding} permutations form a large class of Kazhdan--Lusztig varieties with homogeneous defining ideals. As proven by L.~Escobar, A.~Fink, J.~Rajchgot, and A.~Woo \cite{EFRW}, these include \emph{two-sided mixed ladder determinantal varieties} as special cases.

The \emph{Castelnuovo--Mumford regularity} of a graded module is an invariant used to measure its complexity. In general, regularity may be computed using the minimal free resolution of the module in terms of its Betti numbers. 
Since Kazhdan--Lusztig varieties are Cohen--Macaulay, one may instead compute their regularities in terms of degrees of \emph{unspecialized Grothendieck polynomials}, introduced by A.~Woo and A.~Yong \cite{WooYongGrobner}.

We leverage this fact to give a combinatorial rule for the regularity of $321$-avoiding Kazhdan--Lusztig varieties. We then define a combinatorial algorithm that provides a lower bound for this regularity. In particular cases, which include all two-sided mixed ladder determinantal varieties, this algorithm precisely computes the corresponding regularities. 
This paper generalizes previous work of J.~Rajchgot, A.~Weigandt, and the author \cite{RRW} which gives a tableaux-based formula to compute the regularity of Kazhdan--Lusztig varieties indexed by \emph{Grassmannian} permutations. Our specialized results for two-sided ladders extend work of S.~R.~Ghorpade and C.~Krattenthaler which computes the \emph{$a$-invariants} for particular two-sided mixed ladder determinantal varieties in terms of lattice paths.

\subsection{Summary of Results}
We introduce \emph{$K$-theoretic skew excited Young diagrams} to compute regularities of $321$-avoiding Kazhdan--Lusztig varieties:

\begin{theorem}\label{thm:321321IndReg} Suppose $v\geq w$ are $321$-avoiding permutations.  
Then
\[ \reg (\mathbb{C}[{\bf z}^v]/J_{v,w})=\max\{\# D \, : \, D\in\kskewexcited{(v,w)}\}-\ell(w)\geq\#\Kzip(v,w)-\ell(w).
\]
For $v,w$ {\UD}, equality is achieved.
\end{theorem}

Here $\ell(w)$ denotes the Coxeter length of the permutation $w$. In Section~\ref{sec:skewEYD}, we define $\kskewexcited{(v,w)}$, the set of $K$-theoretic skew excited Young diagrams for $v$ and $w$. 
In Section~\ref{sec:degProof}, we construct the distinguished diagram $\Kzip(v,w)\in \kskewexcited{(v,w)}$. We define \emph{\UD} in Section~\ref{sec:321321alg}. Theorem~\ref{thm:321321IndReg} is proven in Section~\ref{sec:321321reg}.

\begin{example}\label{ex:introThm} The  diagram below is 
$\Kzip(v,w)$ for some $v,w\in S_{14}$.\footnote{Here $v=(4,11,12,13,14,1,2,3,5,6,7,8,9,10)$ and $w=(1,4,7,2,8,3,11,5,6,12,9,13,14,10)$.}

  \[
  \ytableausetup
{boxsize=0.8em}
{\begin{ytableau}
 \ &  + &  +  \\
  \textcolor{blue}{\Plus} &  \ &  + &  + &+ &  \ &  \ &  +  &  +  \\
  + &  \textcolor{blue}{\Plus} &  \ &  + &+ &  \ &  \ &  + &  +   \\
 \ &  + &  \textcolor{blue}{\Plus} &  \ &+ &  \ &  \textcolor{blue}{\Plus} &  \  &  +  \\
 \textcolor{blue}{\Plus} &  \ &  + &  \textcolor{blue}{\Plus} &\ &  \textcolor{blue}{\Plus} &  \ &  \  &  +  \\
\end{ytableau}}
\]
In this case, the pair $v,w$ is {\UD}. Thus
Theorem~\ref{thm:321321IndReg} determines  
\[\reg (\mathbb{C}[{\bf z}^v]/J_{v,w})=24-17=7.\]
By construction 
$\reg (\mathbb{C}[{\bf z}^v]/J_{v,w})$ counts 
the number of bold blue pluses above.
\end{example}
As with regularity, the \emph{a-invariant} of a module is an invariant providing data that may increase efficiency in computations, see \cite{BH92} for  discussion. Using Theorem~\ref{thm:321321IndReg}, we compute the $a$-invariant for $321$-avoiding Kazhdan--Lusztig varieties:
\begin{corollary}\label{cor:321321IndAInv} 
Suppose $v\geq w$  are $321$-avoiding permutations.  
Then
\[ a(\mathbb{C}[{\bf z}^v]/J_{v,w})=\max\{\# D \, : \, D\in\kskewexcited{(v,w)}\}-\ell(v)\geq\#\Kzip(v,w)-\ell(v).
\]
For $v,w$ {\UD}, equality is achieved. 
\end{corollary}
Corollary~\ref{cor:321321IndAInv} is proven in Section~\ref{sec:321321reg}. 

\begin{example}
    Taking $v,w$ as in Example~\ref{ex:introThm}, Corollary~\ref{cor:321321IndAInv} computes
    \[a(\mathbb{C}[{\bf z}^v]/J_{v,w})=24-39=-15.\]
    By construction,  $|a(\mathbb{C}[{\bf z}^v]/J_{v,w})|$ counts the empty cells in the region of $\Kzip(v,w)$.
\end{example}

We then prove  specializations of Theorem~\ref{thm:321321IndReg} and Corollary~\ref{cor:321321IndAInv} for two-sided mixed ladder determinantal varieties. In this case, these correspond to $321$-avoiding  Kazhdan--Lusztig varieties where $v,w$ are {\UD}, so our algorithm computes their regularities.

\subsection{Context in Literature}
The regularity of matrix Schubert varieties is recently well-understood. Initial work of Y.~Ren, J.~Rajchgot, A.~St.~Dizier, A.~Weigandt, and the author \cite{RRRSW} gives a combinatorial formula for the regularity Grassmannian matrix Schubert varieties in terms of integer partitions. 
The work of O.~Pechenik, D.~Speyer, and A.~Weigandt \cite{PSW} uses poset-theoretic techniques to compute the regularity of arbitrary matrix Schubert varieties in terms of permutation statistics. 

J.~Pan and T.~Yu \cite{PanYu} use \cite{PSW} to give a diagrammatic regularity formula for matrix Schubert varieties.  The results in \cite{PSW} have been re-proven by M.~Dreyer, K.~Mészáros, and A.~St.~Dizier \cite{DMS} using saturated chains in Bruhat order. Formulas for regularities of matrix Schubert varieties for particular cases \cite{Hafner,RRW} as well as tangent cones of Schubert varieties \cite{YongCM} have also been studied.

The results of J.~Rajchgot, A.~Weigandt, and the author \cite{RRW} give a combinatorial formula to compute the regularity of Grassmannian Kazhdan--Lusztig varieties. Due to a correspondence
with matrix Schubert varieties, these results in \cite{RRW} may be recovered using \cite{PSW}. In general, $321$-avoiding Kazhdan--Lusztig varieties are not isomorphic to matrix Schubert varieties. Thus Theorem~\ref{thm:321321IndReg} generalizes \cite{RRW} in a different direction than \cite{PSW}.

Combinatorial formulas to compute the $a$-invariant of families of \emph{one-sided ladder determinantal varieties} have been determined \cite{BH92,ConcaAinv,GK15}. Additionally, S.~R.~Ghorpade and C.~Krattenthaler \cite{GK15} give an algorithm to compute the $a$-invariant for a large family of two-sided ladder determinantal varieties in terms of lattice paths. Forthcoming work of L.~Escobar, A.~Fink, J.~Rajchgot, and A.~Woo \cite{EFRW} shows two-sided ladder determinantal varieties are $321$-avoiding Kazhdan--Lusztig varieties. Using this fact, we apply Theorem~\ref{thm:321321IndReg} and Corollary~\ref{cor:321321IndAInv} to compute the regularity and  $a$-invariant for two-sided ladder determinantal varieties in terms of lattice paths. We relate our formulas to \cite{GK15} by generalizing a bijection of V.~Kreiman \cite{KreimanEYD} between lattice paths and excited Young diagrams.

\subsection{Outline}
 In Section~\ref{sec:skewexcited} we establish the combinatorial background and introduce $K$-theoretic skew excited Young diagrams. We give the geometric and commutative algebraic background in Section~\ref{sec:geomBack}. In Section~\ref{sec:mainThmPf} we define {\UD}, define $\Kzip(v,w)$, and  prove our main results of Theorem~\ref{thm:321321IndReg} and Corollary~\ref{cor:321321IndAInv}. In Section~\ref{sec:regLadd} we specialize these results to two-sided mixed ladder determinantal varieties and translate our results in terms of lattice paths.

\section{Combinatorial Background}
\label{sec:skewexcited}
For $n\in\mathbb{Z}$, let $[n]:=\{i\in\mathbb{Z}_{>0}  \, : \, i\leq n\}$.

\subsection{Pipe complexes}
\label{sec:pipeCompl}
Let $S_n$ denote the symmetric group on $n$ letters. We write $u\in S_n$ in one-line notation and let $u_i:=u(i)$ for $i\in [n]$. The \mydef{rank function} of $u\in S_n$ is defined as \[{\sf rank}_u(i,j):=\#\{(k,u_{k})  \, : \,  k\in[i],u_{k} \in[j] \}\]
for $(i,j)\in[n]^2$.
The \mydef{Rothe diagram} of $u\in \Sym_n$ is the set
	\[ D(u):= \{(i, j)\in [n]\times [n] \, :  \,  u_i > j \text{ and } u^{-1}_j > i\}.\]
	We illustrate $D(u)$ as the blank cells in the $n\times n$ grid after placing points in cells $(i,u_i)$ and drawing a line through cells which appear weakly above or weakly to the left of $(i,u_i)$ for each $i\in[n]$. Let $\ell(u):=\# D(u)$ denote the {\bf Coxeter length} of $u$. 
The \mydef{Lehmer code} of $u$ is the tuple ${\sf code}(u)=(c_1,\ldots,c_n)$ where $c_i$ counts the number of boxes in row $i$ of $D(u)$. Further, ${\sf code}(u)$ uniquely  encodes $u$ \cite[Proposition~2.1.2]{Manivel}.

\begin{example}\label{ex:Rothe} Below are $D(v)$ and $D(w)$ for $v=46128935(10)7$ and $w=412368597(10)$, respectively.
    \[
\begin{tikzpicture}[scale=.3]
\draw (0,0) rectangle (10,10);

 \draw (0,10) rectangle (3,8);
\draw (0,9)--(3,9);
\draw (1,10)--(1,8);
\draw (2,10)--(2,8);

\draw (4,8) rectangle (5,9);

\draw (2,6) rectangle (3,4);
 \draw (2,5)--(3,5);
 
\draw (4,6) rectangle (5,4);
 \draw (4,5)--(5,5);

\draw (6,6) rectangle (7,4);
 \draw (6,5)--(7,5);

\draw (6,1) rectangle (7,2);

\filldraw (3.5,9.5) circle (.5ex);
\draw[line width = .2ex] (3.5,0) -- (3.5,9.5) -- (10,9.5);
\filldraw (5.5,8.5) circle (.5ex);
\draw[line width = .2ex] (5.5,0) -- (5.5,8.5) -- (10,8.5);
\filldraw (0.5,7.5) circle (.5ex);
\draw[line width = .2ex] (0.5,0) -- (0.5,7.5) -- (10,7.5);
\filldraw (1.5,6.5) circle (.5ex);
\draw[line width = .2ex] (1.5,0) -- (1.5,6.5) -- (10,6.5);
\filldraw (7.5,5.5) circle (.5ex);
\draw[line width = .2ex] (7.5,0) -- (7.5,5.5) -- (10,5.5);
\filldraw (8.5,4.5) circle (.5ex);
\draw[line width = .2ex] (8.5,0) -- (8.5,4.5) -- (10,4.5);
\filldraw (2.5,3.5) circle (.5ex);
\draw[line width = .2ex] (2.5,0) -- (2.5,3.5) -- (10,3.5);
\filldraw (4.5,2.5) circle (.5ex);
\draw[line width = .2ex] (4.5,0) -- (4.5,2.5) -- (10,2.5);
\filldraw (9.5,1.5) circle (.5ex);
\draw[line width = .2ex] (9.5,0) -- (9.5,1.5) -- (10,1.5);
\filldraw (6.5,0.5) circle (.5ex);
\draw[line width = .2ex] (6.5,0) -- (6.5,0.5) -- (10,0.5);
\end{tikzpicture}
\hspace{5em} 
\begin{tikzpicture}[scale=.3]
\draw (0,0) rectangle (10,10);

 \draw (0,10) rectangle (3,9);
 \draw (1,10) -- (1,9);
 \draw (2,10) -- (2,9);
 
\draw (4,6) rectangle (5,4);
 \draw (4,5)--(5,5);

\draw (6,5) rectangle (7,4);

\draw (6,3) rectangle (7,2);

\filldraw (3.5,9.5) circle (.5ex);
\draw[line width = .2ex] (3.5,0) -- (3.5,9.5) -- (10,9.5);
\filldraw (0.5,8.5) circle (.5ex);
\draw[line width = .2ex] (0.5,0) -- (0.5,8.5) -- (10,8.5);
\filldraw (1.5,7.5) circle (.5ex);
\draw[line width = .2ex] (1.5,0) -- (1.5,7.5) -- (10,7.5);
\filldraw (2.5,6.5) circle (.5ex);
\draw[line width = .2ex] (2.5,0) -- (2.5,6.5) -- (10,6.5);
\filldraw (5.5,5.5) circle (.5ex);
\draw[line width = .2ex] (5.5,0) -- (5.5,5.5) -- (10,5.5);
\filldraw (7.5,4.5) circle (.5ex);
\draw[line width = .2ex] (7.5,0) -- (7.5,4.5) -- (10,4.5);
\filldraw (4.5,3.5) circle (.5ex);
\draw[line width = .2ex] (4.5,0) -- (4.5,3.5) -- (10,3.5);
\filldraw (8.5,2.5) circle (.5ex);
\draw[line width = .2ex] (8.5,0) -- (8.5,2.5) -- (10,2.5);
\filldraw (6.5,1.5) circle (.5ex);
\draw[line width = .2ex] (6.5,0) -- (6.5,1.5) -- (10,1.5);
\filldraw (9.5,0.5) circle (.5ex);
\draw[line width = .2ex] (9.5,0) -- (9.5,0.5) -- (10,0.5);
\end{tikzpicture}
\]
Here $\ell(w)=7<\ell(v)=14$ and ${\sf code}(v)=(3,4,0,0,2,2,2,0,0,1,0)$. 
\end{example}

Define the algebra over $\mathbb Z$ generated by $\{e_u \, : \, u\in S_n\}$ with multiplication such that
\[e_ue_{s_i}=
\begin{cases}
e_{us_i} &\text{if } \ell(us_i)>\ell(u), \text{ and} \\
e_u & \text{otherwise.}
\end{cases}\]
Here $s_i$ is the simple transposition which permutes $i$ and $i+1$.

For $u\in S_n$
label the boxes of $D(u)$ along rows so that $k$th  leftmost box in row $i$
 is assigned the label $i+k-1$.
 Given $P\subseteq D(u)$, let ${\sf word}(P)$ in $D(u)$ be the sequence formed by reading the labels of $P$ in this labeling of $D(u)$, scanning right to left across rows, from top to bottom. The \mydef{Demazure product} $\dem(P)$ of $P$ is the permutation that satisfies \[e_{s_{i_1}}\cdots e_{s_{i_k}}=e_{\dem(P)},\]
 where ${\sf word}(P)=(i_1,i_2,\ldots,i_k)$ in $D(u)$.

 Take $v,w\in S_n$ where $v\geq w$, \emph{i.e.}, $v$ covers $w$ in Bruhat order.
 Define 
 \begin{align*} 
     \KPipes(v,w)&:=\{P\subseteq D(v)\, :  \,\dem(P)=w\}, \text{ and}\\
     \Pipes(v,w)&:=\{P\in  \KPipes(v,w) \, :  \,\#P=\ell(w)\}.
\end{align*}
 We illustrate $P\subseteq D(v)$ by filling each $(i,j)\in P$ with a $+$ in $D(v)$.

\begin{example}\label{ex:pipeSubword} The left two diagrams are labeled diagrams $D(v)$ and $D(w)$ for $v,w$ as in Example~\ref{ex:Rothe}. This gives ${\sf word}(D(w))=(3,2,1,5,7,6,8)$ in $D(w)$. 
The third diagram is $P\in \Pipes(v,w)$, and the fourth is some $P'\in \KPipes(v,w)$.
    \[
\begin{tikzpicture}[scale=.35]
\draw (0,0) rectangle (10,10);

 \draw (0,10) rectangle (3,8);
\draw (0,9)--(3,9);
\draw (1,10)--(1,8);
\draw (2,10)--(2,8);

\draw (4,8) rectangle (5,9);

\draw (2,6) rectangle (3,4);
 \draw (2,5)--(3,5);
 
\draw (4,6) rectangle (5,4);
 \draw (4,5)--(5,5);

\draw (6,6) rectangle (7,4);
 \draw (6,5)--(7,5);

\draw (6,1) rectangle (7,2);

\put(3,92){\scriptsize{$1$}}
\put(13,92){\scriptsize{$2$}}
\put(23,92){\scriptsize{$3$}}
\put(3,82){\scriptsize{$2$}}
\put(13,82){\scriptsize{$3$}}
\put(23,82){\scriptsize{$4$}}

\put(43,82){\scriptsize{$5$}}

\put(23,52){\scriptsize{$5$}}
\put(43,52){\scriptsize{$6$}}
\put(63,52){\scriptsize{$7$}}
\put(23,42){\scriptsize{$6$}}
\put(43,42){\scriptsize{$7$}}
\put(63,42){\scriptsize{$8$}}

\put(63,12){\scriptsize{$9$}}

\filldraw (3.5,9.5) circle (.5ex);
\draw[line width = .2ex] (3.5,0) -- (3.5,9.5) -- (10,9.5);
\filldraw (5.5,8.5) circle (.5ex);
\draw[line width = .2ex] (5.5,0) -- (5.5,8.5) -- (10,8.5);
\filldraw (0.5,7.5) circle (.5ex);
\draw[line width = .2ex] (0.5,0) -- (0.5,7.5) -- (10,7.5);
\filldraw (1.5,6.5) circle (.5ex);
\draw[line width = .2ex] (1.5,0) -- (1.5,6.5) -- (10,6.5);
\filldraw (7.5,5.5) circle (.5ex);
\draw[line width = .2ex] (7.5,0) -- (7.5,5.5) -- (10,5.5);
\filldraw (8.5,4.5) circle (.5ex);
\draw[line width = .2ex] (8.5,0) -- (8.5,4.5) -- (10,4.5);
\filldraw (2.5,3.5) circle (.5ex);
\draw[line width = .2ex] (2.5,0) -- (2.5,3.5) -- (10,3.5);
\filldraw (4.5,2.5) circle (.5ex);
\draw[line width = .2ex] (4.5,0) -- (4.5,2.5) -- (10,2.5);
\filldraw (9.5,1.5) circle (.5ex);
\draw[line width = .2ex] (9.5,0) -- (9.5,1.5) -- (10,1.5);
\filldraw (6.5,0.5) circle (.5ex);
\draw[line width = .2ex] (6.5,0) -- (6.5,0.5) -- (10,0.5);
\end{tikzpicture}
\hspace{1.5em} 
\begin{tikzpicture}[scale=.35]
\draw (0,0) rectangle (10,10);

 \draw (0,10) rectangle (3,9);
 \draw (1,10) -- (1,9);
 \draw (2,10) -- (2,9);
 
\draw (4,6) rectangle (5,4);
 \draw (4,5)--(5,5);

\draw (6,5) rectangle (7,4);

\draw (6,3) rectangle (7,2);

\put(3,92){\scriptsize{$1$}}
\put(13,92){\scriptsize{$2$}}
\put(23,92){\scriptsize{$3$}}

\put(43,52){\scriptsize{$5$}}
\put(43,42){\scriptsize{$6$}}
\put(63,42){\scriptsize{$7$}}

\put(63,22){\scriptsize{$8$}}

\filldraw (3.5,9.5) circle (.5ex);
\draw[line width = .2ex] (3.5,0) -- (3.5,9.5) -- (10,9.5);
\filldraw (0.5,8.5) circle (.5ex);
\draw[line width = .2ex] (0.5,0) -- (0.5,8.5) -- (10,8.5);
\filldraw (1.5,7.5) circle (.5ex);
\draw[line width = .2ex] (1.5,0) -- (1.5,7.5) -- (10,7.5);
\filldraw (2.5,6.5) circle (.5ex);
\draw[line width = .2ex] (2.5,0) -- (2.5,6.5) -- (10,6.5);
\filldraw (5.5,5.5) circle (.5ex);
\draw[line width = .2ex] (5.5,0) -- (5.5,5.5) -- (10,5.5);
\filldraw (7.5,4.5) circle (.5ex);
\draw[line width = .2ex] (7.5,0) -- (7.5,4.5) -- (10,4.5);
\filldraw (4.5,3.5) circle (.5ex);
\draw[line width = .2ex] (4.5,0) -- (4.5,3.5) -- (10,3.5);
\filldraw (8.5,2.5) circle (.5ex);
\draw[line width = .2ex] (8.5,0) -- (8.5,2.5) -- (10,2.5);
\filldraw (6.5,1.5) circle (.5ex);
\draw[line width = .2ex] (6.5,0) -- (6.5,1.5) -- (10,1.5);
\filldraw (9.5,0.5) circle (.5ex);
\draw[line width = .2ex] (9.5,0) -- (9.5,0.5) -- (10,0.5);
\end{tikzpicture}
\hspace{1.5em} 
\begin{tikzpicture}[scale=.35]
\draw (0,0) rectangle (10,10);

 \draw (0,10) rectangle (3,8);
\draw (0,9)--(3,9);
\draw (1,10)--(1,8);
\draw (2,10)--(2,8);

\draw (4,8) rectangle (5,9);

\draw (2,6) rectangle (3,4);
 \draw (2,5)--(3,5);
 
\draw (4,6) rectangle (5,4);
 \draw (4,5)--(5,5);

\draw (6,6) rectangle (7,4);
 \draw (6,5)--(7,5);

\draw (6,1) rectangle (7,2);

\put(0.5,91.5){$+$}
\put(10.5,91.5){$+$}
\put(20.5,91.5){$+$}

\put(40.5,82){$+$}

\put(40.5,52){$+$}
\put(60.5,52){$+$}
\put(60.5,42){$+$}

\filldraw (3.5,9.5) circle (.5ex);
\draw[line width = .2ex] (3.5,0) -- (3.5,9.5) -- (10,9.5);
\filldraw (5.5,8.5) circle (.5ex);
\draw[line width = .2ex] (5.5,0) -- (5.5,8.5) -- (10,8.5);
\filldraw (0.5,7.5) circle (.5ex);
\draw[line width = .2ex] (0.5,0) -- (0.5,7.5) -- (10,7.5);
\filldraw (1.5,6.5) circle (.5ex);
\draw[line width = .2ex] (1.5,0) -- (1.5,6.5) -- (10,6.5);
\filldraw (7.5,5.5) circle (.5ex);
\draw[line width = .2ex] (7.5,0) -- (7.5,5.5) -- (10,5.5);
\filldraw (8.5,4.5) circle (.5ex);
\draw[line width = .2ex] (8.5,0) -- (8.5,4.5) -- (10,4.5);
\filldraw (2.5,3.5) circle (.5ex);
\draw[line width = .2ex] (2.5,0) -- (2.5,3.5) -- (10,3.5);
\filldraw (4.5,2.5) circle (.5ex);
\draw[line width = .2ex] (4.5,0) -- (4.5,2.5) -- (10,2.5);
\filldraw (9.5,1.5) circle (.5ex);
\draw[line width = .2ex] (9.5,0) -- (9.5,1.5) -- (10,1.5);
\filldraw (6.5,0.5) circle (.5ex);
\draw[line width = .2ex] (6.5,0) -- (6.5,0.5) -- (10,0.5);
\end{tikzpicture}
\hspace{1.5em} 
\begin{tikzpicture}[scale=.35]
\draw (0,0) rectangle (10,10);

 \draw (0,10) rectangle (3,8);
\draw (0,9)--(3,9);
\draw (1,10)--(1,8);
\draw (2,10)--(2,8);

\draw (4,8) rectangle (5,9);

\draw (2,6) rectangle (3,4);
 \draw (2,5)--(3,5);
 
\draw (4,6) rectangle (5,4);
 \draw (4,5)--(5,5);

\draw (6,6) rectangle (7,4);
 \draw (6,5)--(7,5);

\draw (6,1) rectangle (7,2);

\put(40.5,82){$+$}

\put(0.5,91.5){$+$}
\put(10.5,91.5){$+$}
\put(20.5,91.5){$+$}

\put(20.5,52){$+$}

\put(20.5,42){$+$}
\put(60.5,52){$+$}
\put(60.5,42){$+$}

\filldraw (3.5,9.5) circle (.5ex);
\draw[line width = .2ex] (3.5,0) -- (3.5,9.5) -- (10,9.5);
\filldraw (5.5,8.5) circle (.5ex);
\draw[line width = .2ex] (5.5,0) -- (5.5,8.5) -- (10,8.5);
\filldraw (0.5,7.5) circle (.5ex);
\draw[line width = .2ex] (0.5,0) -- (0.5,7.5) -- (10,7.5);
\filldraw (1.5,6.5) circle (.5ex);
\draw[line width = .2ex] (1.5,0) -- (1.5,6.5) -- (10,6.5);
\filldraw (7.5,5.5) circle (.5ex);
\draw[line width = .2ex] (7.5,0) -- (7.5,5.5) -- (10,5.5);
\filldraw (8.5,4.5) circle (.5ex);
\draw[line width = .2ex] (8.5,0) -- (8.5,4.5) -- (10,4.5);
\filldraw (2.5,3.5) circle (.5ex);
\draw[line width = .2ex] (2.5,0) -- (2.5,3.5) -- (10,3.5);
\filldraw (4.5,2.5) circle (.5ex);
\draw[line width = .2ex] (4.5,0) -- (4.5,2.5) -- (10,2.5);
\filldraw (9.5,1.5) circle (.5ex);
\draw[line width = .2ex] (9.5,0) -- (9.5,1.5) -- (10,1.5);
\filldraw (6.5,0.5) circle (.5ex);
\draw[line width = .2ex] (6.5,0) -- (6.5,0.5) -- (10,0.5);
\end{tikzpicture}
\]
\end{example}
 Defined by A.~Woo and A.~Yong \cite{WooYongGrobner}, the \mydef{unspecialized Grothendieck polynomial} is 
 \begin{equation}
 \label{eq:unspecgroth}
     \mathfrak G_{v,w}(\mathbf t):=\sum_{P\in\KPipes(v,w)}(-1)^{\# P-\ell(w)} \prod_{(i,j)\in P}t_{ij}.
 \end{equation}
 By setting $v=w_0\in S_n$ and specializing variables $t_{ij}$, these unspecialized Grothendieck polynomials recover the {double Grothendieck polynomials} of \cite{LS82}. 
Note that we follow the conventions of \cite{RRW} for $\mathfrak G_{v,w}(\mathbf t)$, which differ from those in \cite{WooYongGrobner}.

\subsection{Skew Excited Young Diagrams}\label{sec:skewEYD}
A permutation $u\in \Sym_n$ is \mydef{$321$-avoiding} if there does not exist a \mydef{$321$ pattern} in $u$, \emph{i.e.}, indices
$i<j<k$ such that $u_k<u_j<u_i$. 
For example, $u=1\underline{7}2\underline{5}83\underline{4}6$ is not $321$-avoiding; the underlined entries form a $321$ pattern in $u$. Let $\avoid{n}:=\{u\in S_n \, : \, u \text{ is } 321\text{-avoiding}\}$. A permutation $u\in \Sym_n$ is \mydef{Grassmannian} if there exists at most one $i\in[n-1]$ such that $u_i>u_{i+1}$. Grassmannian permutations form a subset of $321$-avoiding permutations.

For $u\in \avoid{n}$, let \[\phi_u:\{P\subseteq D(u)\}\rightarrow \{S\subset [n]^2\}\] be the map which deletes all empty rows and columns of $D(u)$ from $P\subseteq D(u)$, shifting remaining columns left and remaining rows up.

\begin{proposition}\cite[Proposition 2.2.13]{Manivel}\label{prop:321-graph}
    For $u\in \avoid{n}$, $\codePD{u}:=\phi_u(D(u))$ is a skew Young diagram $\lambda / \mu$ for some partitions $\mu\subseteq\lambda$.
\end{proposition}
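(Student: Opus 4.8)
The plan is to characterize the shape of $\codePD{u}$ directly in terms of the one-line notation of a $321$-avoiding permutation, using the descent structure that such permutations enjoy. Recall that $u$ is $321$-avoiding if and only if both the subsequence of $u$ at its descent bottoms and the subsequence at its descent tops are increasing; equivalently, $u$ decomposes into two increasing subsequences. The Rothe diagram $D(u)$ consists of the cells $(i,j)$ with $u_i>j$ and $u^{-1}_j>i$. First I would record, for each row $i$, that the set of columns $j$ appearing in row $i$ of $D(u)$ is precisely $\{\,j : j<u_i,\ j \text{ not among } u_1,\dots,u_{i-1}\,\}$ — a fact immediate from the definition. So row $i$ of $D(u)$ is determined by which values less than $u_i$ have already been "used up" in earlier rows.

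The key step is to show that after deleting empty rows and columns, the nonempty rows of $\codePD{u}$ are \emph{left-justified contiguous} and their row-lengths are \emph{weakly decreasing} (giving $\lambda$), while simultaneously establishing that the collection of cells is \emph{upward-closed after the shift} in the sense that forces a partition $\mu$ to sit on top. Concretely, I would argue: (1) within any single row, the columns of $D(u)$ form a contiguous block after the column-deletion map $\phi_u$ — this uses $321$-avoidance, because a "gap" in a row would force a descent pattern producing a $321$; (2) the nonempty rows, read top to bottom, have the property that the (shifted) column-interval of a lower row is contained in or overlaps-to-the-left the interval of the row above, which again follows from the two-increasing-subsequence structure; (3) combining these, the shifted diagram is exactly $\{(a,b) : \mu_a < b \le \lambda_a\}$ for the partition $\lambda$ recording shifted row lengths and a partition $\mu$ recording shifted row-start positions, and one checks $\mu \subseteq \lambda$ and that $\mu$ is genuinely weakly decreasing. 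I would phrase these via the rank function ${\sf rank}_u$, since $(i,j)\in D(u)$ iff ${\sf rank}_u(i,j) = {\sf rank}_u(i-1,j) = {\sf rank}_u(i,j-1)$, and monotonicity of the rank function under $321$-avoidance controls the shape.

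Alternatively — and this is probably the cleanest route — I would simply invoke the classical correspondence: a $321$-avoiding permutation $u$ is, up to adding/removing fixed points, a \emph{dominant-complementary} situation, or more precisely use that $u$ is fully commutative and its heap/diagram is a skew shape; but since the statement is cited to \cite[Proposition 2.2.13]{Manivel}, the honest plan is to reduce to that reference after setting up $\phi_u$, checking only that $\phi_u$ as defined here (delete empty rows and columns, shift) matches the normalization in \emph{loc.\ cit.} The main obstacle is step (2): ruling out a "staircase reversal" where a lower row, after shifting, starts strictly to the \emph{right} of where an upper row starts, which would break the partition-containment $\mu\subseteq\lambda$. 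This is exactly where $321$-avoidance is essential and where a careless argument fails — I expect to handle it by a direct contradiction, extracting from such a configuration three positions $i<j<k$ with $u_k<u_j<u_i$.
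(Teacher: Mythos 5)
The paper offers no proof of this proposition at all: it is stated with a citation to Manivel, so the ``paper's approach'' is precisely your fallback option of reducing to the reference after checking that $\phi_u$ (delete empty rows and columns, then shift) matches the normalization in \emph{loc.\ cit.} Your primary plan --- a direct verification from the description of row $i$ of $D(u)$ as $\{j<u_i \mid j\notin\{u_1,\dots,u_{i-1}\}\}$ --- is therefore a genuinely different, self-contained route, and the two places where you defer to ``extract a 321 pattern'' do go through: for contiguity, if $j_1<j_2<j_3$ with $(i,j_1),(i,j_3)\in D(u)$ but $(i,j_2)\notin D(u)$, then $u^{-1}(j_2)<i<u^{-1}(j_1)$, and any box $(k,j_2)\in D(u)$ would force $k<u^{-1}(j_2)<u^{-1}(j_1)$ with $u_k>j_2>j_1$, a $321$ pattern, so column $j_2$ is entirely empty and disappears under $\phi_u$; for the monotonicity of the row intervals, observe that if rows $i<i'$ of $D(u)$ are both nonempty then $u_i<u_{i'}$ (otherwise a box $(i',j')$ with $j'<u_{i'}$, $u^{-1}(j')>i'$ gives the pattern $u_i>u_{i'}>j'$), and from this both endpoints of the shifted intervals vary weakly monotonically, yielding the shape $\lambda/\mu$ with $\mu\subseteq\lambda$. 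Two caveats: as written your argument is a plan with these verifications named but not executed, so it is not yet a complete proof; and in this paper's convention (Young diagrams reflected across the $y$-axis, cf.\ $\codePD{v}$ in Example~\ref{ex:SEYDexSetup}) lower rows shift weakly to the \emph{right}, so your ``overlaps-to-the-left'' phrasing should be mirrored when you pin down $\lambda$ and $\mu$ --- a cosmetic but necessary adjustment. What your direct route buys is a self-contained proof together with an explicit description of $\lambda$ and $\mu$ in terms of ${\sf code}(u)$; what the paper's citation buys is brevity.
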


 Our conventions for drawing Young diagrams reflect diagrams in English notation across the $y$-axis.
Throughout this subsection, assume $v\geq w$, where $v,w\in \avoid{n}$. 

Let $D^{NE}(v,w)\subseteq D(v)$ be the boxes corresponding to the earliest subsequence ${\sf word}(P)$ of ${\sf word}(D(v))$ in $D(v)$ for $P\in \Pipes(v,w)$. Since $w\in \avoid{n}$, no braid moves are required to connect reduced words of $w$, so it is clear $D^{NE}(v,w)$ exists.

Define $\maxexcited{(v,w)}:=\phi_v(D^{NE}(v,w))$. 
We visualize $D\subseteq\codePD{v}$ by filling $(i,j)\in D$ with $+$'s and call $D$ a \mydef{diagram} in $\codePD{v}$.

\begin{example}\label{ex:SEYDexSetup}
    Recall $v,w$ as well as $P,P'$ from Example~\ref{ex:pipeSubword}. The left picture below is $\codePD{v}$. 
    Note that $P=D^{NE}(v,w)$, so the middle diagram below is $\phi_v(P)=\maxexcited{(v,w)}$.
    The rightmost diagram is $\phi_v(P')$:
    \[\begin{picture}(260,35)
\put(0,20){\ytableausetup
{boxsize=0.8em}
{\begin{ytableau}
 \ &  \ &  \ \\
 \ &  \ & \ & \ \\
 \none &  \none & \ & \  & \ \\
  \none &  \none & \ & \  & \ \\
    \none &  \none & \none & \none  & \
\end{ytableau}}}
\put(100,20){\ytableausetup
{boxsize=0.8em}
{\begin{ytableau}
 + &  + &  + \\
 \ &  \ & \ & + \\
 \none &  \none & \ & +  & + \\
  \none &  \none & \ & \  & + \\
    \none &  \none & \none & \none  & \
\end{ytableau}}}
\put(200,20){\ytableausetup
{boxsize=0.8em}
{\begin{ytableau}
 + &  + &  + \\
 \ &  \ & \ & + \\
 \none &  \none & + & \  & + \\
  \none &  \none & + & \  & + \\
    \none &  \none & \none & \none  & \
\end{ytableau}}}
\end{picture}.
\]
\end{example}

An \mydef{excited move} on $D\subseteq\codePD{v}$ is the operation on a $2\times 2$ subsquare of $D$ such that
\begin{equation*}
    \ytableausetup{boxsize=0.8em}
\begin{ytableau}
\, & + \\
& \ 
\end{ytableau}
\hspace{1em}
 \raisebox{-.2em}{$\mapsto$}
 \hspace{1em}
\begin{ytableau}
\ &\\
+ &
\end{ytableau} \ .
\end{equation*}
 For this move to occur, the subsquare must be contained in $\codePD{v}$.
Let 
$\skewexcited(v,w)$ denote the set of $D\subseteq \codePD{v}$ which can be computed through sequential applications of excited moves on $\maxexcited{(v,w)}$. We call a diagram $D\in \skewexcited(v,w)$ a \mydef{skew excited Young diagram} for $v,w$. For $v,w\in S_n$ Grassmannian, $\skewexcited(v,w)$ are ordinary excited Young diagrams, which arise in the study vexillary matrix Schubert varieties \cite{KMY} as well as the equivariant cohomology and $K$-theory of the Grassmannian \cite{Graham.Kreiman,IN,KreimanEYD}.

A \mydef{K-theoretic excited move} on $D\subseteq \codePD{v}$ is the operation on a $2\times 2$ subsquare of $D$ 
\begin{equation*}
    \ytableausetup{boxsize=0.8em}
\begin{ytableau}
\ & +\\
\ &
\end{ytableau}
\hspace{1em}
 \raisebox{-.2em}{$\mapsto$}
 \hspace{1em}
\begin{ytableau}
\, & + \\
+ & \ \\
\end{ytableau} \ ,
\end{equation*}
where all cells pictured are contained in $\codePD{v}$.
Write $\kskewexcited(v,w)$ for the set of diagrams obtainable through sequential applications of excited and K-theoretic excited moves on $\maxexcited{(v,w)}$ in $\codePD{v}$. We say a diagram $D\in \kskewexcited(v,w)$ is a \mydef{K-theoretic skew excited Young diagram} for $v,w$.
Let $\#D$ denote the number of pluses in $D$. We say $D\in \kskewexcited(v,w)$ is \mydef{maximal} if $D'\in \kskewexcited(v,w)$ implies $\#D'\leq \#D$.

\begin{example}\label{ex:SEYDex}
    Continuing Example~\ref{ex:SEYDexSetup}, the left two diagrams are in $\skewexcited(v,w)$. The right two diagrams are maximal diagrams in $\kskewexcited(v,w)$.
    \[\begin{picture}(350,35)
\put(0,20){\ytableausetup
{boxsize=0.8em}
{\begin{ytableau}
+ &  + &  + \\
 \ &  \ & \ & \ \\
 \none &  \none & + & \  & + \\
  \none &  \none & + & \  & + \\
    \none &  \none & \none & \none  & \
\end{ytableau}}}
\put(100,20){\ytableausetup
{boxsize=0.8em}
{\begin{ytableau}
 + &  + &  + \\
 \ &  \ & \ & + \\
 \none &  \none & \ & \  & + \\
  \none &  \none & + & \  & + \\
    \none &  \none & \none & \none  & \
\end{ytableau}}}
\put(200,20){\ytableausetup
{boxsize=0.8em}
{\begin{ytableau}
 + &  + &  + \\
 \ &  \ & \ & + \\
 \none &  \none & \ & +  & + \\
  \none &  \none & + & \  & + \\
    \none &  \none & \none & \none  & \
\end{ytableau}}}
\put(300,20){\ytableausetup
{boxsize=0.8em}
{\begin{ytableau}
 + &  + &  + \\
 \ &  \ & \ & + \\
 \none &  \none & + & \  & + \\
  \none &  \none & + & \  & + \\
    \none &  \none & \none & \none  & \
\end{ytableau}}}
\end{picture}
\]
\end{example}

\begin{proposition}\label{prop:EYDpipesbij}
   For $v\geq w$ where $v,w\in \avoid{n}$, the map $\phi_v$ restricted to $\KPipes{(v,w)}$ gives a bijection 
\begin{equation*}
\widetilde{\phi_v}:\KPipes(v,w)\rightarrow\kskewexcited(v,w)
\end{equation*} 
such that for $P\in \KPipes(v,w)$, $\# P=\# \widetilde{\phi_v}(P)$.
\end{proposition}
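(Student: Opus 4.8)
The plan is to separate the routine content of the statement from its substance. By construction, $\phi_v$ removes from a subset $P\subseteq D(v)$ precisely the cells lying in the empty rows and columns of $D(v)$ and then rigidly shifts the remaining rows up and the remaining columns left, as in the definition of $\phi_v$. Since every cell of $D(v)$ lies in a non-empty row and column, this is merely a relabelling of positions, so $\phi_v$ is a bijection from $\{P\subseteq D(v)\}$ onto $\{S\subseteq\codePD{v}\}$; its restriction to $\KPipes(v,w)$ is therefore automatically injective and satisfies $\#P=\#\widetilde{\phi_v}(P)$. Everything thus reduces to the one nontrivial assertion: $\phi_v\big(\KPipes(v,w)\big)=\kskewexcited(v,w)$.

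The engine of the proof is a dictionary between the moves defining $\kskewexcited(v,w)$ and operations on pipe dreams. A $2\times 2$ subsquare of $\codePD{v}$ pulls back under $\phi_v^{-1}$ to a quadruple of cells $(r,c),(r,c'),(r',c),(r',c')$ of $D(v)$, with $r<r'$ consecutive among the non-empty rows of $D(v)$ and $c<c'$ consecutive among the non-empty columns; all four lie in $D(v)$ because the square lies in $\codePD{v}=\phi_v(D(v))$. I would first prove the \emph{local move lemma}: if $(r,c')\in P$ while $(r,c),(r',c),(r',c')\notin P$, then both $\dem\big((P\setminus\{(r,c')\})\cup\{(r',c)\}\big)=\dem(P)$ and $\dem\big(P\cup\{(r',c)\}\big)=\dem(P)$. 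These are exactly the claims that an excited move, respectively a K-theoretic excited move, on $\phi_v(P)$ lifts through $\phi_v$ to a Demazure-product-preserving operation on $P$; I would verify them by tracking the two pipes through the relevant $2\times 2$ window and combining the nil-Hecke rules $e_ue_{s_i}=e_{us_i}$ and $e_ue_{s_i}=e_u$, using $v\in\avoid{n}$ to know ${\sf word}(D(v))$ is reduced and that the local picture is that of a chute/ladder move, and using $w\in\avoid{n}$ (full commutativity) so that only commutations, never braid moves, intervene. Because $D^{NE}(v,w)\in\Pipes(v,w)\subseteq\KPipes(v,w)$ and $\maxexcited{(v,w)}=\phi_v(D^{NE}(v,w))$, iterating the lemma out of $D^{NE}(v,w)$ gives $\kskewexcited(v,w)\subseteq\phi_v(\KPipes(v,w))$ at once.

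For the reverse inclusion I would take $P\in\KPipes(v,w)$ and induct on $\#P$. If $\#P=\ell(w)$, then $P\in\Pipes(v,w)$, and I would show that whenever $P\neq D^{NE}(v,w)$ some cell of $P$ sits in the southwest corner of a $2\times 2$ window as in the lemma with the other three cells empty; moving it northeast is an inverse excited move, legal by the lemma and keeping $\dem=w$, and it strictly decreases the sum of the row indices of the cells of $P$. Iterating reaches $D^{NE}(v,w)$, and reading the chain backwards writes $\phi_v(P)$ as a string of excited moves applied to $\maxexcited{(v,w)}$, so $\phi_v(P)\in\skewexcited(v,w)\subseteq\kskewexcited(v,w)$. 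If $\#P>\ell(w)$, then ${\sf word}(P)$ is a subword of ${\sf word}(D(v))$ with Demazure product $w$ but length exceeding $\ell(w)$, hence has a redundant crossing; I would show one can be taken to be a cell $(r',c)\in P$ which, with some $(r,c')\in P$, fills a $2\times2$ window having $(r,c),(r',c')\notin P$ and with $\dem(P\setminus\{(r',c)\})=w$. Then $P':=P\setminus\{(r',c)\}\in\KPipes(v,w)$ has smaller cardinality, so $\phi_v(P')\in\kskewexcited(v,w)$ by induction, and re-inserting $(r',c)$ is exactly a K-theoretic excited move applied to $\phi_v(P')$; hence $\phi_v(P)\in\kskewexcited(v,w)$.

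The hard part is the converse half of this last argument: showing that every pipe dream $P\in\KPipes(v,w)$ other than $D^{NE}(v,w)$ really does contain a crossing that can be moved northeast, or deleted, in precisely the $2\times 2$-window shape the moves require, with the Demazure product preserved and with genuine progress toward $D^{NE}(v,w)$. This is where full commutativity of $v$ and $w$ must be used in earnest---one must translate the abstract redundancy of a non-reduced Demazure subword into the geometry of the skew shape $\codePD{v}=\lambda/\mu$ and rule out the possibility that the crossing one wants to move is blocked by another in a configuration the move set does not handle. The forward local move lemma, the injectivity, and the cardinality identity are comparatively routine once the cell dictionary between $D(v)$ and $\codePD{v}$ is fixed.
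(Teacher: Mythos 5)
Your reduction is sound as far as it goes: $\phi_v$ is indeed just a relabelling of cells, so injectivity and $\#P=\#\widetilde{\phi_v}(P)$ are immediate, and the whole content is the equality $\phi_v(\KPipes(v,w))=\kskewexcited(v,w)$. Your ``local move lemma'' (excited and K-theoretic excited moves lift through $\phi_v$ to Demazure-product-preserving operations, via commutations and $e_{s_i}e_{s_i}=e_{s_i}$) is believable and, once you check that diagonally adjacent boxes of $\codePD{v}$ carry the same letter in the labelling of $D(v)$ and that the interleaving letters commute, it gives the inclusion $\kskewexcited(v,w)\subseteq\phi_v(\KPipes(v,w))$.

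The genuine gap is the reverse inclusion, and it is exactly the step you yourself flag as ``the hard part'' and then only assert. Your induction needs two existence statements: (i) every $P\in\Pipes(v,w)$ with $P\neq D^{NE}(v,w)$ contains a crossing sitting in a $2\times 2$ window (pulled back from $\codePD{v}$) with the other three cells empty, whose inverse excited move preserves $\dem$; and (ii) every $P\in\KPipes(v,w)$ with $\#P>\ell(w)$ contains a deletable crossing whose removal leaves precisely the K-theoretic window configuration, so that re-inserting it is a K-theoretic excited move. Neither is proved in your proposal; both are nontrivial, since a priori a diagram could be ``stuck'' (no legal $2\times 2$ inverse move) without being $D^{NE}(v,w)$, and a redundant letter of a non-reduced subword need not sit in the clean local window your move set handles --- ruling these out is where full commutativity must actually be exploited. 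This missing content is precisely what the paper's proof supplies by citation: it identifies $\KPipes(v,w)$ with the diagrams reachable from $D^{NE}(v,w)$ by (K-theoretic) ladder moves, using Bergeron--Billey together with the subword-complex interpretation of $\KPipes(v,w)$ in Woo--Yong, and then invokes Gao's Theorem~4.1 to conclude that for $321$-avoiding $w$ all such ladder moves are of the $2\times 2$ form, which under $\phi_v$ are exactly the excited and K-theoretic excited moves. So your route (a from-scratch connectivity argument) is legitimate and genuinely different from the paper's citation-based one, but as written it stops short of proving the one statement that carries the proposition; to complete it you would either have to reprove the ladder-move connectivity and its $2\times 2$ restriction in the fully commutative setting, or cite those results as the paper does.
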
  
\begin{proof}
For $D\subseteq[n]\times [n]$, a ladder move is the operation on a $2\times k$ strip in $D$ such that
\begin{equation*}
    \ytableausetup{boxsize=1.1em}
\begin{ytableau}
\ & + & {\scriptstyle{\text{$\cdots$}}} & +& +\\
\ & + & {\scriptstyle{\text{$\cdots$}}} & +&
\end{ytableau}
\hspace{1em}
 \raisebox{-.2em}{$\mapsto$}
 \hspace{1em}
\begin{ytableau}
\ & + & {\scriptstyle{\text{$\cdots$}}} & +& \ \\
+ & + & {\scriptstyle{\text{$\cdots$}}} & +&
\end{ytableau} \hspace{1em} \cup \hspace{1em}
\begin{ytableau}
\ & + & {\scriptstyle{\text{$\cdots$}}} & +& +\\
+ & + & {\scriptstyle{\text{$\cdots$}}} & +&
\end{ytableau} \ .
\end{equation*}
All cells above are contained in $[n]\times [n]$ and $k\geq 2$. Let 
\[S=\{D\subseteq D(v) \, : \, D \text{ obtained by applying ladder moves starting from } D^{NE}(v,w)\}.\]

Using \cite{BergeronBilley} and the subword complex interpretation of $\KPipes(v,w)$ as given in \cite[Section~3]{WooYongGrobner}, $S=\KPipes(v,w)$.
  By \cite[Theorem~4.1]{Gao} since $w\in\avoid{n}$, all ladder moves in this case are of the form
\begin{equation*}
    \ytableausetup{boxsize=1em}
\begin{ytableau}
\ & +\\
\ &
\end{ytableau}
\hspace{1em}
 \raisebox{-.2em}{$\mapsto$}
 \hspace{1em}
\begin{ytableau}
\, & \ \\
+ & \ \\
\end{ytableau} \hspace{1em} \cup
\hspace{1em}
\begin{ytableau}
\, & + \\
+ & \ \\
\end{ytableau}  \ .
\end{equation*}
Thus the statement follows by the definition of $\phi_v$.
\end{proof}

 \begin{corollary}\label{cor:unspGrSEYD} Suppose $v\geq w$ where $v,w\in \avoid{n}$. Then
\[\deg( \mathfrak G_{v,w}(\mathbf t))=\max\{\# D \, : \, D\in \kskewexcited(v,w)\}.\]
\end{corollary}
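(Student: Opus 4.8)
The plan is to read off the degree of $\mathfrak{G}_{v,w}(\mathbf t)$ directly from its definition in \eqref{eq:unspecgroth} and combine this with the bijection of Proposition~\ref{prop:EYDpipesbij}. By construction,
\[
\mathfrak{G}_{v,w}(\mathbf t) = \sum_{P\in\KPipes(v,w)} (-1)^{\#P - \ell(w)} \prod_{(i,j)\in P} t_{ij},
\]
so each monomial appearing has degree $\#P$ for the corresponding $P$, and hence $\deg(\mathfrak G_{v,w}(\mathbf t)) \leq \max\{\#P \mid P\in\KPipes(v,w)\}$. The potential subtlety is cancellation: a priori the top-degree terms could cancel. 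So the first real step is to argue that no cancellation occurs among the monomials of maximal degree. This holds because the map $P\mapsto \prod_{(i,j)\in P} t_{ij}$ is injective on $\KPipes(v,w)$ — distinct subsets of $D(v)$ give distinct squarefree monomials in the $t_{ij}$ — so every monomial in the sum has coefficient $\pm 1$ and in particular is nonzero. Therefore $\deg(\mathfrak G_{v,w}(\mathbf t)) = \max\{\#P \mid P\in\KPipes(v,w)\}$ exactly.

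Next I would transport this maximum across the bijection. By Proposition~\ref{prop:EYDpipesbij}, the restriction $\widetilde{\phi_v}\colon \KPipes(v,w)\to\kskewexcited(v,w)$ is a bijection with $\#P = \#\widetilde{\phi_v}(P)$ for every $P$. Consequently
\[
\max\{\#P \mid P\in\KPipes(v,w)\} = \max\{\#D \mid D\in\kskewexcited(v,w)\},
\]
and chaining the two displayed equalities gives the claim. That is essentially the whole argument; the corollary is a formal consequence of the definition of the unspecialized Grothendieck polynomial together with the weight-preserving bijection already established.

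The only step requiring any care is the no-cancellation claim, i.e. verifying that the monomials $\prod_{(i,j)\in P} t_{ij}$ are pairwise distinct as $P$ ranges over $\KPipes(v,w)$. This is immediate: a squarefree monomial in commuting variables $t_{ij}$ determines, and is determined by, its support, which is exactly the set $P$ (viewed as a subset of $D(v)\subseteq [n]\times[n]$). Hence distinct $P$ yield distinct monomials, each $\KPipes(v,w)$ contributes a single term with coefficient $(-1)^{\#P-\ell(w)}\neq 0$, and in particular a term of maximal degree $\max_P \#P$ survives. I expect no genuine obstacle here; the subword-complex and ladder-move machinery needed for the harder direction has already been packaged into Proposition~\ref{prop:EYDpipesbij}, so this corollary is purely a matter of assembling pieces.
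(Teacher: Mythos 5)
Your proposal is correct and follows essentially the same route as the paper, whose proof is the one-line combination of Proposition~\ref{prop:EYDpipesbij} with Equation~\eqref{eq:unspecgroth}; your explicit no-cancellation observation (distinct $P\subseteq D(v)$ give distinct squarefree monomials, so each appears with coefficient $\pm 1$) is exactly the point left implicit there.
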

\begin{proof}
This follows by Proposition~\ref{prop:EYDpipesbij} and Equation~\eqref{eq:unspecgroth}.
\end{proof}

\begin{example}\label{ex:degVWeyd}
For $v,w$ as in Example~\ref{ex:SEYDex}, Corollary \ref{cor:unspGrSEYD} determines $\deg(\mathfrak G_{v,w}(\mathbf t))=8$.
\end{example}

\section{Castelnuovo--Mumford Regularity of Kazhdan--Lusztig varieties}\label{sec:geomBack}
In this section, we define Castelnuovo--Mumford regularity, $a$-invariants, and Kazhdan--Lusztig varieties. We then recall results of \cite{RRW} which relate the Castelnuovo--Mumford regularity of Kazhdan--Lusztig varieties to unspecialized Grothendieck polynomials.

\subsection{Castelnuovo--Mumford Regularity}
Let $S = \mathbb{C}[x_1,\ldots, x_n]$ be a polynomial ring with the standard grading, and let $I\subseteq S$ be a homogeneous ideal. 
 The \mydef{Hilbert series} of $S/I$ is a formal power series
\begin{equation*}
    H(S/I; t) = \sum_{k\in \mathbb{Z}}\text{dim}_{\mathbb{C}}((S/I)_{k}){t}^{k}
=\frac{K(S/I;t)}{(1-{t})^{n}}.
\end{equation*}
The $\mathbf{K}$\mydef{-polynomial} of $S/I$ is the numerator $K(S/I;{t})\in \mathbb{C}[t^{\pm 1}]$. 
A minimal free resolution of $S/I$ is the complex
\[
 0 \rightarrow \bigoplus_jS(-j)^{\beta_{l,j}(S/I)}\rightarrow \bigoplus_jS(-j)^{\beta_{l-1,j}(S/I)}\rightarrow \cdots \rightarrow \bigoplus_jS(-j)^{\beta_{0,j}(S/I)} \rightarrow S/I \rightarrow 0,
\]
where $l\leq n$ and $S(-j)$ is the free $S$-module with degree shifted by $j$. 
The \mydef{Castelnuovo--Mumford regularity} of $S/I$, written $\reg(S/I)$, is the statistic
\[
\reg(S/I):=\max\{j-i \, :  \,\beta_{i,j}(S/I)\neq 0\}.
\]
For $S/I$ Cohen--Macaulay,
\begin{equation}\label{eq:mainRegEquation}
\reg(S/I) = \text{deg }K(S/I;t) - \text{ht}_S I,
\end{equation}
where $\text{ht}_S I$ denotes the height of the ideal $I$. For more context, consult \cite[Lemma 2.5]{Benedetti.Varbaro}. 

The \mydef{$a$-invariant} of $S/I$, written $a(S/I)$, is the negative of the least degree of a generator of the graded canonical module of $S/I$, as defined by S.~Goto and K.~Watanabe \cite{GotoWat}. When $S/I$ is Cohen--Macaulay,
\begin{equation}\label{eq:aInvReg}
    a(S/I)=\reg(S/I)-d,
\end{equation}
where $d$ is the Krull dimension of $S/I$.

\subsection{Kazhdan--Lusztig varieties}
 We follow the conventions used in \cite{RRW}.
For $v\in S_n$, define $M^{(v)}=(m_{ij})$ to be the $n\times n$ matrix such that for $i,j\in[n]$,
\[m_{ij}:=
\begin{cases}
1 &\text{if } v_i=j, \\
z_{ij} & \text{if } (i,j)\in D(v),\\
0 & \text{ otherwise}.
\end{cases}\]
Let $\mathbb{C}[{\bf z}^v]:= \mathbb{C}[z_{ij}\, :  \,(i,j)\in D(v)]$. For $v\geq w$ where $v,w\in S_{n}$, the \mydef{Kazhdan--Lusztig ideal} $J_{v,w}\subseteq \mathbb{C}[{\bf z}^v]$ is defined by
\[
J_{v,w} := \langle {\sf rank}_w(i,j)+1 - \text{minors}  \text{ in } M^{(v)}_{[i],[j]}\, :  \,(i,j)\in D(w)\rangle,
\]
where $M_{I,J}$ denotes the submatrix of $M$ with row indices in $I$ and column indices in $J$ for $I,J\subseteq[n]$. When $v\in\avoid{n}$ $J_{v,w}$ is homogeneous, see \cite[Footnote on pg.~25]{Knutson-Frob}. Additional cases for which $J_{v,w}$ is homogeneous can be found in \cite[Propositions~6.3 and~6.4]{Neye}, but no full characterization is known.

Let $B_+,B_-\subset \GL_n(\mathbb{C})$ denote the Borel and opposite Borel subgroups, respectively. As defined in \cite{WooYongSings}, the \mydef{Kazhdan--Lusztig variety} is the intersection of the \mydef{Schubert variety} $B_-\backslash \overline{B_- w B_+}\subseteq B_-\backslash \GL_n(\mathbb{C})$ with the \mydef{opposite Schubert cell} $B_-\backslash B_-vB_-$. The coordinate ring of this {Kazhdan--Lusztig variety} is $\mathbb{C}[{\bf z}^{v}]/J_{v,w}$. 
Using \cite[Lemma A.4]{KazhdanLusztig} and the fact that Schubert varieties are Cohen--Macaulay \cite{Fulton.Flags,KM,Ramanathan},
 $\mathbb{C}[{\bf z}^{v}]/J_{v,w}$ is Cohen--Macaulay.

As reformulated in \cite[Lemma 6.3]{RRW},
\begin{lemma}\cite[Theorem 4.5]{WooYongGrobner}\label{lem:321KFormula}
Let $v,w\in \avoid{n}$ where $v\geq w$.
Then 
\begin{equation*}
   K(\mathbb{C}[{\bf z}^v]/J_{v,w};t) =  \sum_{P\in \KPipes(v,w)}(-1)^{\#P-\ell(w)}(1-t)^{\#P}.
\end{equation*}
\end{lemma}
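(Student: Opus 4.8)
Since this statement is, in the conventions of \cite{RRW}, a restatement of \cite[Theorem~4.5]{WooYongGrobner} — the reconciliation of normalizations being exactly \cite[Lemma~6.3]{RRW} — my plan is to quote Woo--Yong's result and match conventions, while also recording the mechanism behind it, where the actual content sits.

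First I would invoke Woo--Yong's Gröbner basis theorem for $J_{v,w}$: with respect to a suitable anti-diagonal term order on $\mathbb{C}[{\bf z}^v]$, the minors generating $J_{v,w}$ form a Gröbner basis, and the initial ideal $\mathrm{in}(J_{v,w})$ is the Stanley--Reisner ideal of the subword complex $\Delta(Q_v,w)$, where $Q_v={\sf word}(D(v))$ is the word read off the labeled Rothe diagram of $v$ as in Section~\ref{sec:pipeCompl}. Since passage to an initial ideal is a flat degeneration, Hilbert series — hence $\mathbf K$-polynomials — are preserved, so $K(\mathbb{C}[{\bf z}^v]/J_{v,w};t)=K(\mathbb{C}[\Delta(Q_v,w)];t)$.

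Next I would compute the right-hand side combinatorially. For a Stanley--Reisner ring on $m=\ell(v)$ variables one has $K(\mathbb{C}[\Delta];t)=\sum_{F\in\Delta}t^{\#F}(1-t)^{m-\#F}$. A face of $\Delta(Q_v,w)$ is a subset $F\subseteq D(v)$ whose complementary subword has Demazure product $\geq w$, so reindexing by $P=D(v)\setminus F$ rewrites this as $\sum_{\dem(P)\geq w}t^{\ell(v)-\#P}(1-t)^{\#P}$. The crucial step is then the identity
\[
\sum_{\dem(P)\geq w}t^{\ell(v)-\#P}(1-t)^{\#P}=\sum_{P\in\KPipes(v,w)}(-1)^{\#P-\ell(w)}(1-t)^{\#P},
\]
which collapses the sum over all Demazure-$\geq w$ subwords of $Q_v$ into the signed sum over $\KPipes(v,w)=\{P:\dem(P)=w\}$; this is precisely the alternating-sum form of the $\mathbf K$-polynomial of a subword complex due to Knutson--Miller, and is what Woo--Yong use. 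Comparing the right-hand side with \eqref{eq:unspecgroth} gives $K(\mathbb{C}[{\bf z}^v]/J_{v,w};t)=\mathfrak G_{v,w}(\mathbf t)\big|_{t_{ij}\mapsto 1-t}$, and I would finish by checking that this specialization is compatible with the normalization of $\mathfrak G_{v,w}$ fixed in Section~\ref{sec:pipeCompl} rather than that of \cite{WooYongGrobner} — the bookkeeping performed in \cite[Lemma~6.3]{RRW}.

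I expect the main obstacle to be the displayed identity: converting the ``geometric'' face-sum of the Stanley--Reisner ring into the ``combinatorial'' signed Demazure-product sum. On the reduced range ($\#P=\ell(w)$) this is the subword-complex computation underlying the pipe-dream formula for Schubert polynomials; the non-reduced contributions, which supply the higher-degree part of the $\mathbf K$-polynomial and its signs, rely on $\Delta(Q_v,w)$ being a vertex-decomposable ball or sphere so that the inclusion--exclusion telescopes correctly. Everything else is quoted directly from \cite{WooYongGrobner} or is the routine convention check of \cite[Lemma~6.3]{RRW}.
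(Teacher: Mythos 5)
Your proposal is correct and is consistent with the paper's treatment: the paper does not reprove this lemma but quotes \cite[Theorem 4.5]{WooYongGrobner} through the convention check of \cite[Lemma 6.3]{RRW}, and your sketch accurately reconstructs the argument behind that citation (antidiagonal Gr\"obner degeneration of $J_{v,w}$ to the Stanley--Reisner ideal of the subword complex, the Knutson--Miller ball/sphere identity converting the face sum $\sum_{\dem(P)\geq w}t^{\ell(v)-\#P}(1-t)^{\#P}$ into the signed sum over $\KPipes(v,w)$, and the specialization $t_{ij}\mapsto 1-t$ matching \eqref{eq:unspecgroth}). The one point worth stating explicitly is where $v\in\avoid{n}$ enters: it guarantees $J_{v,w}$ is homogeneous for the standard grading, so the standard-graded $K$-polynomial on the left is well-defined and is preserved under the degeneration, which is exactly the bookkeeping done in \cite[Lemma 6.3]{RRW}.
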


Combining Lemma~\ref{lem:321KFormula} with Equation~\eqref{eq:mainRegEquation} produces the following:
\begin{proposition}\cite[Proposition 6.4]{RRW}
\label{prop:pipeKLreg}
Let $v,w\in \avoid{n}$ where $v\geq w$.  Then
\begin{equation*}\label{eq:keyDegFormula}
   \deg K(\mathbb{C}[{\bf z}^v]/J_{v,w};t) = \deg \mathfrak{G}_{v,w}(\bf{t}).
\end{equation*}
Furthermore, the Castelnuovo--Mumford regularity of $\mathbb{C}[{\bf z}^v]/J_{v,w}$ is given by
\begin{equation*}
    \reg (\mathbb{C}[{\bf z}^v]/J_{v,w}) =   \deg \mathfrak{G}_{v,w}({\bf{t}})-\ell(w).
\end{equation*}
\end{proposition}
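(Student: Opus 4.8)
The strategy is to feed the two available generating-function descriptions into the Cohen--Macaulay regularity formula~\eqref{eq:mainRegEquation}. Since $v\in\avoid{n}$, the ideal $J_{v,w}$ is homogeneous, and $\mathbb{C}[{\bf z}^v]/J_{v,w}$ is Cohen--Macaulay, so~\eqref{eq:mainRegEquation} gives $\reg(\mathbb{C}[{\bf z}^v]/J_{v,w}) = \deg K(\mathbb{C}[{\bf z}^v]/J_{v,w};t) - \operatorname{ht}_{\mathbb{C}[{\bf z}^v]}J_{v,w}$. Thus it suffices to establish two things: (i) $\deg K(\mathbb{C}[{\bf z}^v]/J_{v,w};t) = \deg\mathfrak{G}_{v,w}(\mathbf t)$, and (ii) $\operatorname{ht}_{\mathbb{C}[{\bf z}^v]}J_{v,w} = \ell(w)$.

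For (i), set $N := \max\{\#P \mid P\in\KPipes(v,w)\}$. In the defining sum~\eqref{eq:unspecgroth} for $\mathfrak{G}_{v,w}(\mathbf t)$, distinct subsets $P\subseteq D(v)$ contribute distinct squarefree monomials $\prod_{(i,j)\in P}t_{ij}$, so no cancellation is possible and $\deg\mathfrak{G}_{v,w}(\mathbf t) = N$ (equivalently, combine Corollary~\ref{cor:unspGrSEYD} with Proposition~\ref{prop:EYDpipesbij}). In the expression $K(\mathbb{C}[{\bf z}^v]/J_{v,w};t) = \sum_{P}(-1)^{\#P-\ell(w)}(1-t)^{\#P}$ from Lemma~\ref{lem:321KFormula}, every summand has degree $\le N$, while the coefficient of $t^N$ is $\sum_{P\colon \#P=N}(-1)^{\#P-\ell(w)}(-1)^{N} = (-1)^{\ell(w)}\cdot\#\{P\mid \#P=N\}\ne 0$; hence $\deg K(\mathbb{C}[{\bf z}^v]/J_{v,w};t) = N$ as well.

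For (ii), note $\mathbb{C}[{\bf z}^v]$ is a polynomial ring in $\#D(v)=\ell(v)$ variables, so the claim is equivalent to $\dim\mathbb{C}[{\bf z}^v]/J_{v,w} = \ell(v)-\ell(w)$, the standard dimension of the Kazhdan--Lusztig variety as a transverse slice of the Schubert variety $X_w$ (\cite{WooYongSings}). This can also be extracted from Lemma~\ref{lem:321KFormula}: since $\mathbb{C}[{\bf z}^v]/J_{v,w}$ is Cohen--Macaulay, $K(\mathbb{C}[{\bf z}^v]/J_{v,w};t)$ vanishes at $t=1$ to order exactly $\operatorname{ht}_{\mathbb{C}[{\bf z}^v]}J_{v,w}$; and in $\sum_{P}(-1)^{\#P-\ell(w)}(1-t)^{\#P}$ the lowest power of $(1-t)$ that occurs is $(1-t)^{\ell(w)}$, because $\#P\ge\ell(\dem(P))=\ell(w)$ always, with equality exactly when $P\in\Pipes(v,w)$, and the coefficient of $(1-t)^{\ell(w)}$ is $\#\Pipes(v,w)>0$ since $v\ge w$ in Bruhat order. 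Combining (i) and (ii) with~\eqref{eq:mainRegEquation} yields $\reg(\mathbb{C}[{\bf z}^v]/J_{v,w}) = \deg\mathfrak{G}_{v,w}(\mathbf t)-\ell(w)$. The only substantive ingredient beyond formal manipulation of the two formulas is the dimension (equivalently height) of the Kazhdan--Lusztig variety, which is where I expect to lean on external geometric input; the degree comparison itself is robust precisely because $\mathfrak{G}_{v,w}(\mathbf t)$ has no monomial cancellation and the top-degree term of the $K$-polynomial cannot cancel.
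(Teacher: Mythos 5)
Your proposal is correct and follows essentially the same route as the paper, which simply combines Lemma~\ref{lem:321KFormula} with Equation~\eqref{eq:mainRegEquation} and cites \cite{RRW} for the details. The particulars you supply---no cancellation in the top degree of $K(\mathbb{C}[{\bf z}^v]/J_{v,w};t)$ versus $\deg\mathfrak{G}_{v,w}(\mathbf t)$, and $\operatorname{ht}_{\mathbb{C}[{\bf z}^v]}J_{v,w}=\ell(w)$ read off from the order of vanishing at $t=1$ (equivalently $\dim=\ell(v)-\ell(w)$)---are exactly the content of the cited argument.
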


Applying this to $a$-invariants:
\begin{corollary}
\label{cor:pipeKLainv}
Let $v,w\in \avoid{n}$ where $v\geq w$.  
The $a$-invariant of $\mathbb{C}[{\bf z}^v]/J_{v,w}$ is given by
\begin{equation*}
    a(\mathbb{C}[{\bf z}^v]/J_{v,w}) =   \deg \mathfrak{G}_{v,w}({\bf{t}})-\ell(v).
\end{equation*}
\end{corollary}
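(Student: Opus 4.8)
The plan is to combine Proposition~\ref{prop:pipeKLreg} with the Cohen--Macaulay identity \eqref{eq:aInvReg} relating the $a$-invariant to the regularity and the Krull dimension. Since $\mathbb{C}[{\bf z}^v]/J_{v,w}$ is Cohen--Macaulay (as noted after the definition of Kazhdan--Lusztig varieties), equation \eqref{eq:aInvReg} gives
\[
a(\mathbb{C}[{\bf z}^v]/J_{v,w}) = \reg(\mathbb{C}[{\bf z}^v]/J_{v,w}) - d,
\]
where $d$ is the Krull dimension of $\mathbb{C}[{\bf z}^v]/J_{v,w}$. Substituting the regularity formula $\reg(\mathbb{C}[{\bf z}^v]/J_{v,w}) = \deg\mathfrak{G}_{v,w}({\bf t}) - \ell(w)$ from Proposition~\ref{prop:pipeKLreg}, it remains to identify $d$.

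First I would compute the Krull dimension. The ambient ring $\mathbb{C}[{\bf z}^v]$ is a polynomial ring in $\#D(v) = \ell(v)$ variables, so $\dim \mathbb{C}[{\bf z}^v] = \ell(v)$. The Kazhdan--Lusztig variety $B_-\backslash\overline{B_-wB_+} \cap B_-\backslash B_-vB_-$ has codimension equal to the codimension of the Schubert variety inside the opposite Schubert cell, which is $\ell(w)$ (the Schubert variety indexed by $w$ has codimension $\ell(w)$ in the appropriate sense, and the opposite cell $B_-\backslash B_-vB_-$ is smooth of dimension $\ell(v)$; the intersection is proper). Hence $\mathrm{ht}_{\mathbb{C}[{\bf z}^v]} J_{v,w} = \ell(w)$ and $d = \ell(v) - \ell(w)$. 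Alternatively, this can be read directly off the Hilbert series: by Lemma~\ref{lem:321KFormula}, $K(\mathbb{C}[{\bf z}^v]/J_{v,w};t)$ is a polynomial whose lowest-degree term, coming from the unique $P \in \Pipes(v,w)$ with $\#P = \ell(w)$ appearing in each term, has order $\ell(w)$, so after writing $H(S/I;t) = K(S/I;t)/(1-t)^{\ell(v)}$ and cancelling, the order of vanishing of the denominator is $\ell(v) - \ell(w)$, which is the Krull dimension.

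Combining these, $a(\mathbb{C}[{\bf z}^v]/J_{v,w}) = \bigl(\deg\mathfrak{G}_{v,w}({\bf t}) - \ell(w)\bigr) - \bigl(\ell(v) - \ell(w)\bigr) = \deg\mathfrak{G}_{v,w}({\bf t}) - \ell(v)$, which is the claimed formula. The only genuinely substantive point is the computation of the Krull dimension / height, i.e., confirming $\mathrm{ht}_{\mathbb{C}[{\bf z}^v]} J_{v,w} = \ell(w)$; I expect this to be the main obstacle, though it is standard for Kazhdan--Lusztig varieties and follows from the dimension theory of Schubert varieties (equivalently, from the fact that $K(\mathbb{C}[{\bf z}^v]/J_{v,w};t)$ has nonzero value related to the multiplicity after dividing by $(1-t)^{\ell(v)-\ell(w)}$). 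Everything else is formal bookkeeping with \eqref{eq:aInvReg} and Proposition~\ref{prop:pipeKLreg}.
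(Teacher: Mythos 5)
Your proposal is correct and follows essentially the same route as the paper: combine Proposition~\ref{prop:pipeKLreg} with Equation~\eqref{eq:aInvReg}, using that $\mathbb{C}[{\bf z}^v]/J_{v,w}$ is Cohen--Macaulay of Krull dimension $\ell(v)-\ell(w)$ (a fact the paper simply asserts and you justify). One small slip: the minimal-degree term of $K(\mathbb{C}[{\bf z}^v]/J_{v,w};t)$ in powers of $(1-t)$ comes from \emph{all} $P\in\Pipes(v,w)$, not a unique one, but since these terms all carry sign $+1$ they cannot cancel, so your dimension count still goes through.
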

\begin{proof}
    This follows by Proposition~\ref{prop:pipeKLreg} combined with Equation~\eqref{eq:aInvReg} since $\mathbb{C}[{\bf z}^v]/J_{v,w}$ has dimension $d=\ell(v)-\ell(w)$.
\end{proof}

For $S/I$ Cohen--Macaulay, the $a$-invariant is the lower bound for when its Hilbert function and Hilbert polynomial agree. 
Using Equations~\eqref{eq:mainRegEquation} and~\eqref{eq:aInvReg}, A.~Stelzer and A.~Yong \cite{StelzerYong} prove that all homogeneous Kazhdan--Lusztig varieties are Hilbertian, \emph{i.e.}, the Hilbert function and Hilbert polynomial of a Kazhdan--Lusztig variety agree at all non-negative integer values, excepting the $v=w$ case.

\section{Main Construction and Proof of Theorem~\ref{thm:321321IndReg}}\label{sec:mainThmPf}

Assume $v\geq w$ where $v,w\in \avoid{n}$ throughout this section. In Section~\ref{sec:321321alg}, we give technical constructions. In Section~\ref{sec:degProof}, we define the diagram $\Kzip(v,w)$ and relate $\Kzip(v,w)$ to $\deg \mathfrak{G}_{v,w}({\bf{t}})$. Section~\ref{sec:321321reg} contains proofs of Theorem~\ref{thm:321321IndReg} and Corollary~\ref{cor:321321IndAInv}.

\subsection{Main Construction}\label{sec:321321alg}
 We index $\codePD{v}$ using matrix indexing, where its upper leftmost box corresponds to $(1,1)$.  For a box ${\sf b}\in [n]^2$, we write ${\sf b}=({\sf b}(1),{\sf b}(2))$.  We say $D\subset [n]^2$ is a \mydef{diagonal} if each pair ${\sf b}, {\sf c}\in D$ satisfies ${\sf b}(1)\neq {\sf c}(1)$ and if ${\sf b}(1)<{\sf c}(1)$ then ${\sf b}(2)<{\sf c}(2)$.

 We build an equivalence relation $\sim$ on boxes in a sequence of $K$-theoretic skew excited Young diagrams connected by ($K$-)excited moves.
 Suppose $D_i\subseteq [n]^2$ for $ 0\leq i\leq t$ where 
 \begin{equation}\label{eq:diagSeq}
     D_0\rightarrow D_1\rightarrow\dots\rightarrow D_t
 \end{equation}
 is such that $D_i$ is obtained from $D_{i-1}$ by a ($K$-)excited move at ${\sf b}_i\in D_{i-1}$ for $i\in[t]$. For each move,  we define  ${\sf c}\sim{\sf c}$ for ${\sf c}\in D_{i-1}\cap D_i$ and  ${\sf b}_i\sim{\sf b}_i+(1,-1)$ for ${\sf b}_i\in D_{i-1}$ and ${\sf b}_i+(1,-1)\in D_i$. 
We include trivial moves $D_i\rightarrow D_i$ for reflexivity.
Note this relation is well-defined across different choices of paths from $D_0$ to $D_t$ since the set of moves to be applied must be equal.
 
Let $D\subset[n]^2$ and take ${\sf b},{\sf c}\in D$. We define a \mydef{SW lattice path} ${\sf b}$ to ${\sf c}$ in $D$ as a sequence $({\sf b}_0,{\sf b}_1,\dots,{\sf b}_{t-1},{\sf b}_{t})$, where 
${\sf b}_0={\sf b}$, ${\sf b}_t={\sf c}$, and ${\sf b}_i\in \{{\sf b}_{i-1}+(1,0),{\sf b}_{i-1}+(0,-1)\}$ for $i\in[t]$ and $t\in\mathbb{Z}_{\geq 0}$. 
 Now suppose ${\sf b}, {\sf c}\in\maxexcited{(v,w)}$ such that ${\sf b}$ is the topmost then the rightmost among ${\sf b}$ and ${\sf c}$.
 Define $\gap({\sf b}, {\sf c})=\gap({\sf c}, {\sf b})$ as the minimal $g\in\mathbb{Z}_{\geq 0}$ such that 
 there exists $D\in\skewexcited{(v,w)}$ where the following hold:
 \begin{itemize}
     \item ${\sf b}'={\sf b}+(g,-g)\in D$,
     \item ${\sf b}'\sim {\sf b}$, and
     \item There exists a SW lattice path in $D$ from ${\sf b}'$ to ${\sf c}$.
 \end{itemize}
 If no such $g$ exists,
 we define $\gap({\sf b}, {\sf c})=\infty$.
 We say boxes ${\sf b}, {\sf c}\in\maxexcited{(v,w)}$ lie within the same \mydef{connected component} of $\maxexcited{(v,w)}$ if there exists ${\sf a}\in\maxexcited{(v,w)}$ such that $\gap({\sf a}, {\sf c})=\gap({\sf a}, {\sf b})=0$.

For ${\sf b}\in \maxexcited{(v,w)}$, among the boxes strictly to the left of ${\sf b}$ in $\maxexcited{(v,w)}$,
consider the box ${\sf a}\in \maxexcited{(v,w)}$ lying minimally  to the left, then minimally below ${\sf b}$ such that $\gap({\sf b}, {\sf a})<\infty$. If such a box ${\sf a}$ exists, 
define 
$\WB{{\sf b}}:={\sf a}$. Otherwise take $\WB{{\sf b}}:={\sf b}$.
Similarly, among the boxes strictly below ${\sf b}$ in $\maxexcited{(v,w)}$,
consider the box ${\sf c}\in \maxexcited{(v,w)}$ lying minimally below, then minimally  to the left of ${\sf b}$ such that $\gap({\sf b}, {\sf c})<\infty$. If such a box ${\sf c}$ exists, 
define 
$\SB{{\sf b}}:={\sf c}$. Otherwise take $\SB{{\sf b}}:={\sf b}$.

\begin{example}\label{ex:DtopCC}
 Consider  
 $\maxexcited{(v,w)}$ below for certain $v, w \in\avoid{14}$\footnote{Here $v= (5,10,11,12,1,13,2,3,14,4,6,7,8,9)$, $w=(1,2,5,6,10,3,4,11,7,13,8,9,12,13,14)$.}.
 Then $\maxexcited{(v,w)}$ has three connected components.
     \[\ytableausetup
{boxsize=0.8em}
{\begin{ytableau}
  \ &  \ &  + &  + \\
 \ &  \ &  + &  + &  \  &  + &  + &  + \\
 \ &  \ &  + &  + &  \  &  + &  + &  + \\
 \ &  \ &  \ &  \ &  \  &  \ &  \ &  \ \\
 \none &  \ &  \ &  \ &  \  &  + &  + &  + \\
  \none &  \none &  \none &    \ & \ &  \ &  \ &  \ 
\end{ytableau}}
\]
We find 
$\gap((3,4),(2,6))=1$, $\gap((3,4),(2,4))=0$, and  $\gap((3,4),(3,6))=\infty$.
We can compute  $\WB{{(2,6)}}=(3,4)$ and $\SB{(2,6)}=(2,6)$. Similarly, we determine $\WB{{(3,7)}}=(3,6)$ and $\SB{(3,7)}=(5,6)$.
 \end{example}

 Let $\dTab(D):D\rightarrow \mathbb{Z}_{\geq0}$ denote the set of tableaux on a diagram $D\subseteq[n]^2$.

 \begin{construction}\label{constr:tab}
  We iteratively construct diagrams $D^{(i)}$ and tableaux $\Tab{v,w}^{(i)}\in \dTab({ D}^{(i)})$ for $0\leq i\leq \ell$ for some $\ell\in\mathbb{Z}_{\geq 0}$. 
Set $D^{(0)}:=\maxexcited({v,w})$. 
  We first recursively define $\Tab{v,w}^{(0)}\in \dTab({ D}^{(0)})$.
For ${\sf b}\in D^{(0)}$ such that ${\sf b}=\SB{{\sf b}}=\WB{{\sf b}}$, define $\Tab{v,w}^{(0)}({\sf b})$ such that
\begin{align*}
    \Tab{v,w}^{(0)}({\sf b}):=\max\Big(\{0\}\cup \{k\in[n] \, : \, 
    &{\sf b}+(k',-k'),{\sf b}+(k',1-k'),\\
    &{\sf b}+(k'-1,-k')\in \codePD{v'}- D^{(0)} \text{ for each } k'\in [k]\}\Big).
\end{align*}   

Otherwise if ${\sf b}\in D^{(0)}$ such that $\SB{{\sf b}}\neq{\sf b}$ or $\WB{{\sf b}}\neq{\sf b}$,  define 
\[
\Tab{v,w}^{(0)}({{\sf b}}):=
\begin{cases}
    \min{\big(\Tab{v,w}^{(0)}(\WB{{\sf b}})+\gap(\WB{{\sf b}}, {{\sf b}}),\, \Tab{v,w}^{(0)}(\SB{{\sf b}})+\gap(\SB{{\sf b}}, {{\sf b}})\big)}& \text{if } \SB{{\sf b}}\neq{\sf b} \text{ and }\WB{{\sf b}}\neq{\sf b},\\
    \Tab{v,w}^{(0)}(\WB{{\sf b}})+\gap(\WB{{\sf b}}, {{\sf b}})& \text{if } \SB{{\sf b}}={\sf b},\\
     \Tab{v,w}^{(0)}(\SB{{\sf b}})+\gap(\SB{{\sf b}}, {{\sf b}}) & \text{if } \WB{{\sf b}}={\sf b}.\\
\end{cases}
\]
We now describe how to construct $D^{(i+1)}$ and $\Tab{v,w}^{(i+1)}\in \dTab({ D}^{(i+1)})$ from $D^{(i)}$ and $\Tab{v,w}^{(i)}$.
Among the diagonals of maximal length in each connected component of $D^{(i)}$, take the bottom-rightmost diagonals in each connected component. Let $Q^{(i)}$ denote the set of these diagonals.
Let $\Lambda^{(i)}\subset Q^{(i)}$ be the subset where ${\sf b}\in   Q^{(i)}-\Lambda^{(i)}$ implies there exists ${\sf c}\in  \Lambda^{(i)}$ such that the following hold:
\begin{itemize}
    \item $\gap({{\sf b}}, {{\sf c}})<\infty$, and 
    \item ${\sf b}$ is the topmost then the rightmost of ${\sf b}$ and ${\sf c}$.
\end{itemize}
We call $\Lambda^{(i)}$ the \mydef{$i$-th main diagonal} of $\maxexcited{(v,w)}$.
Let ${ D}^{(i)}_S\subset D^{(i)}-\Lambda^{(i)}$ denote the boxes strictly below $\Lambda^{(i)}$. Define the tableau $\Sh{v,w}^{(i)}\in \dTab({ D}^{(i)}_S)$ such that for ${\sf b}\in { D}^{(i)}_S$:
\[\Sh{v,w}^{(i)}{({\sf b})}:=\Tab{v,w}^{(i)}({{\sf d}}),\]
where ${\sf {\sf d}}$ is the box in $\Lambda^{(i)}$ minimally to the right of ${\sf b}$ such that $\gap({{\sf b}}, {{\sf d}})<\infty$. {By the definition of $\Lambda^{(i)}$ in terms of bottom leftmost diagonals, we know ${\sf d}$ exists, and we will have $\gap({{\sf b}}, {{\sf d}})=0$.}
Define 
\[D^{(i+1)}=\{{\sf b}+(\Sh{v,w}^{(i)}{({\sf b})},-\Sh{v,w}^{(i)}{({\sf b})})\, : \, {\sf b}\in { D}^{(i)}_S \text{ and } \Tab{v,w}^{(i)}({\sf b})-\Sh{v,w}^{(i)}({\sf b})> 0\}.\] 
    For ${\sf b}\in D^{(i+1)}$ set 
\[\Tab{v,w}^{(i+1)}({{\sf b}}):=\Tab{v,w}^{(i)}({\sf b})-\Sh{v,w}^{(i)}({\sf b}).\]
By construction of $\Tab{v,w}^{(i)}$, note that $D^{(i+1)}\subset\codePD{v}$. Further, if ${\sf b}\in D^{(i+1)}$ then both boxes $\SB{{\sf b}},\WB{{\sf b}}\in D^{(i+1)}$.
The process terminates when $D^{(i+1)}=\emptyset$. 
Let $\ell=\max\{i \, : \, D^{(i)}\neq\emptyset\}$ and set $\mdCR{v,w}=\{\Lambda^{(i)}\}_{i=0}^\ell$.
\end{construction}
We define \[\CM{(v,w)}=\sum_{i=0}^\ell\sum_{{\sf d}\in \Lambda^{(i)}}\Tab{v,w}^{(i)}({\sf d}).\]

\begin{example}\label{ex:TvwConstr}
Consider  
 $\maxexcited{(v,w)}$ below for certain $v, w \in\avoid{16}$\footnote{Here $v= (7,11,12,1,13,14,2,3,15,4,5,16,6,8,9,10)$, $w=(1,2,7,8,3,11,4,12,5,6,13,14,9,10,15,16)$.}.
     \[\begin{picture}(100,75)
\put(0,65){\ytableausetup
{boxsize=0.8em}
{\begin{ytableau}
  \ &  \ &  + &  + &  + &  + \\
 \ &  \ &  + &  + &  + & + &  \ &  + &  + \\
 \ &  \ &  \ &  + &  + & + &  \ &  + &  + \\
 \none &  \ &  \ &  \ &  + & + &  \ &  + &  + \\
 \none &  \ &  \ &  \ &  \ & \ &  \ &  + &  + \\
  \none &  \none &  \none &  \ &  \ & \ &  \ &  \ &  \ \\
 \none &  \none &  \none &  \none &  \none & \ &  \ &  \ &  \ \\
\end{ytableau}}}
\end{picture}
\]
Below, the left is  $\Tab{v,w}^{(0)}$ shaded within $\codePD{v}$. The right diagram is $\Sh{v,w}^{(0)}$ in the shaded boxes of $D^{(0)}_S$ within $\codePD{v}$. Here we have drawn $D^{(0)}-D^{(0)}_S$ with pluses, where $\Lambda^{(0)}$ is bolded.
    \[
\begin{picture}(320,90)
 \put(0,45){$\Tab{v,w}^{(0)}$:}
\put(35,75){\ytableausetup
{boxsize=1em}
{\begin{ytableau}
  \ &  \ & *(lightgray!50)1 &  *(lightgray!50)1 & *(lightgray!50) 1 &  *(lightgray!50)1 \\
 \ &  \ &  *(lightgray!50)1 & *(lightgray!50) 1 & *(lightgray!50) 1 & *(lightgray!50)1 &  \ &  *(lightgray!50)2 &  *(lightgray!50)2 \\
 \ &  \ &  \ &  *(lightgray!50)2 &  *(lightgray!50)1 & *(lightgray!50)1 &  \ & *(lightgray!50) 2 & *(lightgray!50) 2 \\
 \none &  \ &  \ &  \ &  *(lightgray!50)1 & *(lightgray!50)1 &  \ &  *(lightgray!50)2 &  *(lightgray!50)2 \\
 \none &  \ &  \ &  \ &  \ & \ &  \ &  *(lightgray!50)2 &  *(lightgray!50)2 \\
  \none &  \none &  \none &  \ &  \ & \ &  \ &  \ &  \ \\
 \none &  \none &  \none &  \none &  \none & \ &  \ &  \ &  \ \\
\end{ytableau}}}
\put(160,45){$\xrightarrow{\Sh{v,w}^{(0)}}$}
\put(190,75){\ytableausetup
{boxsize=1em}
{\begin{ytableau}
  \ &  \ & \Plus &  + &  + &  + \\
 \ &  \ &  *(lightgray!50)1 & \Plus &  + & + &  \ &  + &  + \\
 \ &  \ &  \ & *(lightgray!50)1 & \Plus & + &  \ &  + &  + \\
 \none &  \ &  \ &  \ &  *(lightgray!50)1 & \Plus &  \ & \Plus &  +  \\
 \none &  \ &  \ &  \ &  \ & \ &  \ &  *(lightgray!50)2 & \Plus \\
  \none &  \none &  \none &  \ &  \ & \ &  \ &  \ &  \ \\
 \none &  \none &  \none &  \none &  \none & \ &  \ &  \ &  \ \\
\end{ytableau}}}
\end{picture}\]
Next, below and to the left we have shaded $\Tab{v,w}^{(1)}$ within $\codePD{v}$.  
In this case, $D^{(1)}_S=\emptyset$ since $\Lambda^{(1)}=D^{(1)}$.
To its right we have drawn $\Lambda^{(1)}$ bolded within $\codePD{v}$.

\[\begin{picture}(320,90)
\put(0,45){$\Tab{v,w}^{(1)}$:}
\put(35,75){\ytableausetup
{boxsize=1em}
{\begin{ytableau}
    \ &  \ & \ &  \ &  \ &  \ \\
 \ &  \ &  \ &  \ &  \ & \ &  \ &  \ &  \ \\
 \ &  \ &  \ &  \ &   \ & \ &  \ &  \ &  \ \\
 \none &  \ &  *(lightgray!50) 1 &  \ &  \ & \ &  \ &  \ &  \  \\
 \none &  \ &  \ &  \ &  \ & \ &  \ &  \ &  \ \\
  \none &  \none &  \none &  \ &  \ & \ &  \ &  \ &  \ \\
 \none &  \none &  \none &  \none &  \none & \ &  \ &  \ &  \ \\
\end{ytableau}}}
\put(160,45){$\xrightarrow{\Sh{v,w}^{(1)}}$}
\put(190,75){\ytableausetup
{boxsize=1em}
{\begin{ytableau}
    \ &  \ & \ &  \ &  \ &  \ \\
 \ &  \ &  \ &  \ &  \ & \ &  \ &  \ &  \ \\
 \ &  \ &  \ &  \ &   \ & \ &  \ &  \ &  \ \\
 \none &  \ & \Plus &  \ &  \ & \ &  \ &  \ &  \  \\
 \none &  \ &  \ &  \ &  \ & \ &  \ &  \ &  \ \\
  \none &  \none &  \none &  \ &  \ & \ &  \ &  \ &  \ \\
 \none &  \none &  \none &  \none &  \none & \ &  \ &  \ &  \ \\
\end{ytableau}}}
\end{picture}
\]
Then we have $D^{(2)}=\emptyset$, so we are finished. We then compute
\[\CM{(v,w)}=\sum_{{\sf d}\in \Lambda^{(0)}}\Tab{v,w}^{(0)}({\sf d})+\sum_{{\sf d}\in \Lambda^{(1)}}\Tab{v,w}^{(1)}({\sf d})=(1+1+1+1+2+2)+(1)=9.
\vspace{-0.5cm}\]
\end{example}

We say the pair $v,w$ are \mydef{\UD} if the following hold:
\begin{enumerate}
    \item[(i)] if ${\sf b}, {\sf c}\in\maxexcited{(v,w)}$ such that ${\sf b}={\sf c}+(k,0)$, then ${\sf c}+(k',0)\in \maxexcited{(v,w)}$ for each $0\leq k'\leq k$.
    \item[(ii)] if ${\sf b},{\sf c}\in\maxexcited{(v,w)}$ lie weakly below $Q^{(0)}$, then $\gap({\sf b}, {\sf c})\in\{0,\infty\}$.
\end{enumerate}

\begin{remark}
  In Construction~\ref{constr:tab} if $v,w$ {\UD},  $Q^{(i)}=\Lambda^{(i)}$ for each $0\leq i\leq \ell$.  
\end{remark}

\begin{example}\label{ex:upperDec}
Looking to Example~\ref{ex:DtopCC}, those $v,w$ are not {\UD}. For example, ${\sf b}=(3,6)$ and ${\sf c}=(5,6)$ violate condition (i). Further, ${\sf b}=(3,4)$ and ${\sf c}=(2,6)$ violate (ii) since ${\sf b},{\sf c}\in Q^{(0)}$ but $\gap({\sf b},{\sf c})=1\not\in \{0,\infty\}$. However, it is straightforward to check that $v,w$ from Example~\ref{ex:TvwConstr} are {\UD}.
\end{example}

\subsection{Applying Construction~\ref{constr:tab}}\label{sec:degProof}
We use the construction in the previous section to obtain the following result, proven at the end of this section:

\begin{theorem}\label{thm:321321IndDeg}
    For $v\geq w$, where $v,w\in\avoid{n}$, we have \[\deg \mathfrak{G}_{v,w}({\bf{t}})\geq \CM{(v,w)}+\ell(w).\]
    If $v,w$ are {\UD}, equality is achieved.
\end{theorem}

Consider the map $\Psi:\kskewexcited{(v,w)}\rightarrow\dTab(\maxexcited{(v,w)})$
 such that for $D\in\kskewexcited{(v,w)}$, the box ${\sf b}$ in $\Psi(D)$ is
 filled with
  the total number of $K$-excited moves applied to boxes ${\sf b}'\sim{\sf b}$ to construct $D$ from $\maxexcited{(v,w)}$.

\begin{example}
Below in the first row is a sequence of diagrams as in Equation~\eqref{eq:diagSeq}, starting with $D_0=\maxexcited{(v,w)}$ for $v,w\in\avoid{8}$\footnote{Here $v=(6,7,8,1,2,3,4,5)$, $w=(1,2,6,3,4,7,5,8)$.}. In the second row are the corresponding $\Psi(D_i)$ for $0\leq i\leq 5$.
      \[\begin{picture}(440,100)
\put(0,25){\ytableausetup
{boxsize=0.9em}
{\begin{ytableau}
       \ &  \ & *(lightgray!50) 0 &  *(lightgray!50) 0 &  *(lightgray!50) 0  \\
       \ & \ & \ &  \ &  *(lightgray!50) 0  \\
       \ & \ & \ &  \ &  \  \\
\end{ytableau}}}
\put(75,25){\ytableausetup
{boxsize=0.9em}
{\begin{ytableau}
       \ &  \ & *(lightgray!50) 0 &  *(lightgray!50) 0 &  *(lightgray!50) 0  \\
       \ & \ & \ &  \ &  *(lightgray!50) 0  \\
       \ & \ & \ &  \ &  \  \\
\end{ytableau}}}
\put(150,25){\ytableausetup
{boxsize=0.9em}
{\begin{ytableau}
      \ &  \ & *(lightgray!50) 0 &   *(lightgray!50)0 &  *(lightgray!50) 0  \\
       \ & \ & \ &  \ &  *(lightgray!50) 1  \\
       \ & \ & \ &  \ &  \  \\
\end{ytableau}}}
\put(225,25){\ytableausetup
{boxsize=0.9em}
{\begin{ytableau}
       \ &  \ &  *(lightgray!50)0 &   *(lightgray!50)1 &  *(lightgray!50) 0  \\
       \ & \ & \ &  \ &   *(lightgray!50)1  \\
       \ & \ & \ &  \ &  \  \\
\end{ytableau}}}
\put(300,25){\ytableausetup
{boxsize=0.9em}
{\begin{ytableau}
       \ &  \ &  *(lightgray!50)0 &   *(lightgray!50)1 &  *(lightgray!50) 0  \\
       \ & \ & \ &  \ &   *(lightgray!50)1  \\
       \ & \ & \ &  \ &  \  \\
\end{ytableau}}}
\put(375,25){\ytableausetup
{boxsize=0.9em}
{\begin{ytableau}
       \ &  \ &  *(lightgray!50)0 &   *(lightgray!50)2 &   *(lightgray!50)0  \\
       \ & \ & \ &  \ & *(lightgray!50) 1  \\
       \ & \ & \ &  \ &  \  \\
\end{ytableau}}}
\put(0,80){\ytableausetup
{boxsize=0.9em}
{\begin{ytableau}
     \ &  \ & + &  + &  +  \\
      \ & \ & \ &  \ &  +  \\
       \ & \ & \ &  \ &  \  \\
\end{ytableau}}}
 \put(65,70){$\rightarrow$}
\put(75,80){\ytableausetup
{boxsize=0.9em}
{\begin{ytableau}
       \ &  \ & \ &  + &  +  \\
       \ & + & \ &  \ &  +  \\
      \ & \ & \ &  \ &  \  \\
\end{ytableau}}}
\put(140,70){$\rightarrow$}
\put(150,80){\ytableausetup
{boxsize=0.9em}
{\begin{ytableau}
       \ &  \ & \ &  + &  +  \\
       \ & + & \ &  \ &  +  \\
       \ & \ & \ &  \textcolor{blue}{\Plus} &  \  \\
\end{ytableau}}}
\put(215,70){$\rightarrow$}
\put(225,80){\ytableausetup
{boxsize=0.9em}
{\begin{ytableau}
      \ &  \ & \ &  + &  +  \\
       \ & + & \textcolor{blue}{\Plus} &  \ & +  \\
       \ & \ & \ &  \textcolor{blue}{\Plus} &  \  \\
\end{ytableau}}}
\put(290,70){$\rightarrow$}
\put(300,80){\ytableausetup
{boxsize=0.9em}
{\begin{ytableau}
     \ &  \ & \ &  + &  +  \\
       \ & \ & \textcolor{blue}{\Plus} &  \ & +  \\
       + & \ & \ &  \textcolor{blue}{\Plus} &  \  \\
\end{ytableau}}}
\put(365,70){$\rightarrow$}
\put(375,80){\ytableausetup
{boxsize=0.9em}
{\begin{ytableau}
      \ &  \ & \ &  + &  +  \\
       \ & \ & \textcolor{blue}{\Plus} &  \ &  + \\
       + & \textcolor{blue}{\Plus} & \ &  \textcolor{blue}{\Plus} &  \ \\
\end{ytableau}}}
\end{picture}
\]
These  $\Psi(D_i)$ are drawn within $\codePD{v}$.
\end{example}

For $U\in\dTab(\maxexcited{(v,w)})$, let $\Pos{U}\subseteq \maxexcited{(v,w)}$ be defined such that:
\[\Pos{U}=\{{\sf b}\in \maxexcited{(v,w)} \, : \, U({\sf b})>0\}.\]
Below, we algorithmically construct a diagram $D_U$ from $U\in \dTab(\maxexcited{(v,w)})$. In general, $D_U$ may not lie within $\codePD{v}$.

\begin{construction}\label{const:org}
     Partition $\Pos{U}$ into maximal length diagonals $\{{\sf C}_i\}_{i=0}^{\ell}$, working in order of right to left. Let the indices of these diagonals increase right to left. 
     We construct $D_U$ using diagonals ${\sf C}_j^{(i)}$, which we construct iteratively. 
Initialize ${\sf C}_i^{(0)}:={\sf C}_i$ for $i\in[\ell]$.

    Let $S_i$ denote the set of boxes below ${\sf C}_i^{(i)}=\{{\sf c}_{i_k}^{(i)}\}$. 
    Consider each ${\sf b}\in S_i$, working from left to right, and bottom to top. 
    Compute 
    \[m=\max\Big\{0,\{U({\sf c}_{i_k}^{(0)})-\gap({\sf b},{\sf c}_{i_k}^{(i)}) \, : \,
     {\sf c}_{i_k}^{(i)} \text{ lies to the upper right of } {\sf b}\}\Big\}.\]
    Then apply $m$ excited moves to ${\sf b}$. Once this has been done for each box in $S_i$, let ${\sf C}_j^{(i+1)}=\{{\sf c}_{j_k}^{(i+1)}\}$ be the  boxes in the resulting diagram such that ${\sf c}_{j_k}^{(i+1)}\sim {\sf c}_{j_k}^{(i)}$ where  ${\sf C}_j^{(i)}=\{{\sf c}_{j_k}^{(i)}\}$.
    Stop after the $i=\ell$ step. Call this diagram $D_U^\circ$.
    Now apply $U({\sf c}_{i_k}^{(0)})$ $K$-excited moves at each ${\sf c}_{i_k}^{(i)}\in {\sf C}_i^{(i)}$ for $i\in[\ell]$. Output the resulting diagram, which we denote by $D_U$.
\end{construction}

We say a subset $Z\subset [n]^2$ is a \mydef{zig--zag} if $Z$ contains no non-trivial diagonals, i.e., no diagonals of length greater than $1$. 
Order boxes in $Z=\{\mathbf{z}_i\}_{i\in[t]}$ such that indices increase from bottom left to top right and  say $Z$ \mydef{starts at} $\mathbf{z}_1$.

\begin{claim}\label{claim:preImg}
        Suppose $v\geq w$, where $v,w\in\avoid{n}$. Consider $U\in\dTab(\maxexcited{(v,w)})$. Then $D_U\in \kskewexcited{(v,w)}$ if and only if for each ${\sf b}\in \maxexcited{(v,w)}$ and each zig--zag $Z=\{\mathbf{z}_i\}_{i\in[t]}$ that starts at ${\sf b}=\mathbf{z}_1$, the following holds:
        \begin{equation}\label{eq:preImg}
            \Big(\sum_{\mathbf{z}_i \in Z}U(\mathbf{z}_i)\Big)-\Big(\sum_{i\in[t-1]}\gap(\mathbf{z}_i,\mathbf{z}_{i+1})\Big)\leq \Tab{v,w}^{(0)}({\sf b}).
        \end{equation}
\end{claim}
\begin{proof}
 By the order of moves in Construction~\ref{const:org}, it follows that we must only confirm that $D_U\subseteq\codePD{v}$. 
Then note that by construction of $\Tab{v,w}^{(0)}$, $D_U\subseteq\codePD{v}$ if and only if no more than $\Tab{v,w}^{(0)}({\sf b})$ total moves are applied to boxes ${\sf b}'$ where ${\sf b}'\sim{\sf b}$. 

Consider a zig-zag $Z$ starting at ${\sf b}$. If  $U\in\Psi(\kskewexcited{(v,w)})$, at least 
\begin{equation}\label{eq:nogapZ}
    \Big(\sum_{\mathbf{z}_i \in Z}U(\mathbf{z}_i)\Big)-\Big(\sum_{i\in[t-1]}\gap(\mathbf{z}_i,\mathbf{z}_{i+1})\Big)
\end{equation}
 moves will be applied to ${\sf b}$ in Construction~\ref{const:org}. Thus 
   if we have $U\in\Psi(\kskewexcited{(v,w)})$, then Equation~\eqref{eq:preImg} holds.

Similarly, the number of moves applied to ${\sf b}$ in Construction~\ref{const:org} is the maximal value of Equation~\eqref{eq:nogapZ} over all zig-zags $Z=\{\mathbf{z}_i\}_{i\in[t]}$ starting at ${\sf b}$. Thus if Equation~\eqref{eq:preImg} holds for each $Z$, then in Construction~\ref{const:org}, no more than  $\Tab{v,w}^{(0)}({\sf b})$ moves are applied to ${\sf b}$. Then the result follows.
\end{proof}

Let $\mdCR{v,w}=\{\Lambda^{(i)}\}$. 
Consider the filling ${\sf F}_{v,w}$ of $\maxexcited{(v,w)}$
such that for ${\sf b}\in \maxexcited{(v,w)}$
\[
{\sf F}_{v,w}({\sf b})=
\begin{cases}
    \Tab{v,w}^{(i)}({\sf b}) & {\sf b}\in \Lambda^{(i)},\\
0 & \text{else}.    
\end{cases}
\]
Let $D_{\tt zip}(v,w):=D_U^\circ$ and $D_{\tt zip}^K(v,w):=D_U$ for $U={\sf F}_{v,w}$.

\begin{example}\label{ex:showComput}
 We continue with $v,w$ as in Example~\ref{ex:TvwConstr}. 
 The leftmost diagram is ${\sf F}_{v,w}$.
The middle diagrams are the intermediate diagrams used in Construction~\ref{const:org}, where we have bolded ${\sf C}_{0}^{(0)}$ and ${\sf C}_{1}^{(1)}$, respectively. The right diagram is $D_{\tt zip}^K(v,w)$. 
 \[\begin{picture}(500,80)
\put(0,65){\Small\ytableausetup
{boxsize=1em}
{\begin{ytableau}
   \ &  \ &  1 &  0 &  0 &  0 \\
 \ &  \ &  0 &  1 &  0 & 0 &  \ &  0 &  0 \\
 \ &  \ &  \ &  1 &  1 & 0 &  \ &  0 &  0 \\
 \none &  \ &  \ &  \ &  0 & 1&  \ &  2 &  0 \\
 \none &  \ &  \ &  \ &  \ & \ &  \ &  0 &  2 \\
  \none &  \none &  \none &  \ &  \ & \ &  \ &  \ &  \ \\
 \none &  \none &  \none &  \none &  \none & \ &  \ &  \ &  \ \\
\end{ytableau}}}
\put(120,65){\Small\ytableausetup
{boxsize=1em}
{\begin{ytableau}
    \ &  \ &  \bf{1} &  0 &  0 &  0 \\
 \ &  \ &  \ &  \bf{1} &  0 & 0 &  \ &  0 &  0 \\
 \ &  0 &  \ &  \ & \bf{1} & 0 &  \ &  0 &  0 \\
 \none &  \ &  1 &  \ &  \ & \bf{1}&  \ &  \bf{2} &  0 \\
 \none &  \ &  \ &  0 &  \ & \ &  \ &  \ &  \bf{2} \\
  \none &  \none &  \none &  \ &  \ & \ &  \ &  \ &  \ \\
 \none &  \none &  \none &  \none &  \none & 0 &  \ &  \ &  \ \\
\end{ytableau}}}
\put(240,65){\Small\ytableausetup
{boxsize=1em}
{\begin{ytableau}
     \ &  \ & { \bf{1}} &  0 &  0 &  0 \\
 \ &  \ &  \ &  \bf{1} &  0 & 0 &  \ &  0 &  0 \\
 \ &  0 &  \ &  \ & \bf{1} & 0 &  \ &  0 &  0 \\
 \none &  \ &  \bf{1} &  \ &  \ & \bf{1}&  \ &  \bf{2} &  0 \\
 \none &  \ &  \ &  0 &  \ & \ &  \ &  \ &  \bf{2} \\
  \none &  \none &  \none &  \ &  \ & \ &  \ &  \ &  \ \\
 \none &  \none &  \none &  \none &  \none & 0 &  \ &  \ &  \ \\
\end{ytableau}}}
\put(360,65){\Small\ytableausetup
{boxsize=1em}
{\begin{ytableau}
   \ &  \ &  +&  + &  + &  + \\
 \ &  \textcolor{blue}{\Plus}  &  \ &  +&  + & + &  \ &  + &  + \\
 \ &  + &  \textcolor{blue}{\Plus}  &  \ &  +& + &  \ &  + &  + \\
 \none &  \ &  +&  \textcolor{blue}{\Plus}  &  \ & +&  \ &  +&  + \\
 \none &  \textcolor{blue}{\Plus} &  \ &  + &  \textcolor{blue}{\Plus}  & \ &  \textcolor{blue}{\Plus}  &  \ &  +\\
  \none &  \none &  \none &  \ &  \ & \textcolor{blue}{\Plus}  &  \ &  \textcolor{blue}{\Plus}  &  \ \\
 \none &  \none &  \none &  \none &  \none & + &  \textcolor{blue}{\Plus}  &  \ &  \ \\
\end{ytableau}}}
\end{picture}
\]
Restricting to the non-bolded black pluses in the rightmost diagram gives $D_{\tt zip}(v,w)$. 
\end{example}

\begin{claim}\label{claim:fav}
    Suppose $v\geq w$, where $v,w\in\avoid{n}$.
    Then $D_{\tt zip}(v,w)\in \skewexcited{(v,w)}$ and $D_{\tt zip}^K(v,w)\in \kskewexcited{(v,w)}$.
\end{claim}
\begin{proof}
This follows by Claim~\ref{claim:preImg} and the construction of $\Tab{v,w}^{(i)}$.
\end{proof}

\begin{remark}\label{rem:org}
Note that if $U=\Psi(D)$ for some $D\in\kskewexcited{(v,w)}$, then  $D_U\in \kskewexcited{(v,w)}$ by Claim~\ref{claim:preImg}. In particular, when constructing $D_U$, we apply a minimal number of excited moves such that $\Pos{\Psi(D_U)}=\Pos{U}$. Further note that $\#D=\sum_{{\sf b}\in \Pos{U}}U({\sf b})+\ell(w)$, so $\#D=\#D_U$.
\end{remark}

For the proof of Theorem~\ref{thm:321321IndDeg}, we will need some technical lemmas.
\begin{lemma}\label{prop:Nsimple2}
  Let $v\geq w$, where $v,w\in\avoid{n}$ such that $v,w$ are {\UD}. 
   Let  $D\in \kskewexcited{(v,w)}$. Suppose $Q\subseteq \Pos{\Psi(D)}$ has a diagonal $\Delta\subseteq \maxexcited{(v,w)}$ weakly to its lower left where $|\Delta|$ is weakly greater than the length of any diagonal in $Q$.
    Then there exists $D'\in \kskewexcited{(v,w)}$ such that $\#D=\#D'$ and $\Pos{\Psi(D')}\subseteq (\Pos{\Psi(D)}-Q)\cup \Delta$.
\end{lemma}
\begin{proof}
Let $U=\Psi(D)$. By Remark~\ref{rem:org}, without loss of generality, we can assume  $D=D_U$. Since such $\Delta $ exists, we can choose $\Delta$ weakly below $\Lambda^{(0)}$  since $v,w$ are {\UD}.

Take a partition of $Q=\sqcup_{k\in[K]} Z_k $ into maximal size zig--zags $Z_k$ such that $Z_k$ lies to the top left of $Z_{k-1}$ for $k\geq 2$ and if ${\sf b},{\sf c}\in Z_k$ then $\gap({\sf b},{\sf c})=0$. For $k\in[K]$, let $Z_k=\{{\sf z}_{k_j}\}$, where the labels in the lower subscript increase from  bottom left to top right. By definition, each zig-zag contain only trivial diagonals, so since $|\Delta|$ is weakly greater than the length of any diagonal in $Q$, we know $|\Delta|\geq K$.

Define the subset of distinct boxes $\{{\sf d}_{k}\}_{k\in[K]}\subseteq \Delta$, with indices ordered from upper left to bottom right, such that the following hold:
\begin{enumerate}
    \item[(a)] for each $k\in[K]$, $\gap({\sf z}_{k_1},{\sf d}_{k})=0$ 
    \item[(b)] $\{{\sf d}_{k}\}_{k\in[K]}$ is the bottommost possible choice in $\Delta$ satisfying (a).
\end{enumerate}
By the definition of {\UDity} and since $|\Delta|\geq K$, such a subset of $\Delta$ exists.
 Then let  $\Omega_1\in\dTab(\maxexcited{(v,w)})$ such that for ${\sf b}\in \maxexcited{(v,w)}$:
\[
\Omega_1({\sf b})=
\begin{cases}
U({\sf b})+\displaystyle\sum_{{\sf z}_{k_j}\in Z_k}U({\sf z}_{k_j}) &\text{ if } {\sf b}={\sf d}_{k} \text{ for some } k\in[K],\\
0 & \text{ if } {\sf b}\in Q,\\
U({\sf b}) & \text{ else}.    
\end{cases}
\]
Since $U\in\Psi(\kskewexcited{(v,w)})$, Claim~\ref{claim:preImg} says that Construction~\ref{const:org} applies at most $\Tab{v,w}^{(0)}({\sf b})$ moves to each ${\sf b}\in \maxexcited{(v,w)}$ in order to construct $D$.

Thus $U({\sf d}_{k})+\sum_{{\sf z}_{k_j}\in Z_k}U({\sf z}_{k_j}) \leq \Tab{v,w}^{(0)}({\sf d}_{k})$ by Claim~\ref{claim:preImg}. Note that by construction of $Z_k$ and choice of ${\sf d}_k$, any contribution of $\gap$ from Equation~\eqref{eq:preImg} is trivial.
We see the left hand side of Equation~\eqref{eq:preImg} for $\Omega_1$ will be weakly less than those left hand side computations for $U$ by construction of $\Omega_1$.
Thus by  Claim~\ref{claim:preImg}, $\Omega_1\in \Psi(\kskewexcited{(v,w)})$.
By construction, $\#D_{\Omega_1}=\#D$ and $\Pos{\Omega_1}\subseteq (\Pos{U}-Q)\cup \Delta$, so $D_{\Omega_1}$ is as desired.
\end{proof}

We say a diagram $D$ is \mydef{NW-hook closed} if ${\sf a},{\sf b}\in D$ have ${\sf a}(1)>{\sf b}(1)$ and ${\sf a}(2)<{\sf b}(2)$ implies $({\sf b}(1),{\sf a}(2))\in D$. 
\begin{claim}\label{claim:NWhook}
 Let $v\geq w$ where $v,w\in\avoid{n}$ such that condition (i) of {\UDity} holds. 
 Then $\maxexcited{(v,w)}$ is NW-hook closed.
\end{claim}
\begin{proof}
    Suppose not. Then there exist ${\sf a},{\sf b}\in \maxexcited{(v,w)}$ where ${\sf a}(1)>{\sf b}(1)$, ${\sf a}(2)<{\sf b}(2)$, and $({\sf b}(1),{\sf a}(2))\not \in \maxexcited{(v,w)}$. Choose such ${\sf a},{\sf b}$ where the area of the rectangular region ${\mathcal R}$ defined by opposite corners ${\sf a},{\sf b}$ is minimized. 
    This implies  
    $({\sf a}(1),d),({\sf b}(1),d)\not\in \maxexcited{(v,w)}$ where ${\sf a}(2)<d<{\sf b}(2)$, or else the area was not minimized. Similarly  $(d,{\sf a}(2)),(d,{\sf a}(2))\not\in \maxexcited{(v,w)}$ where ${\sf a}(1)<d<{\sf b}(1)$.

    In particular, we find ${\sf a}+(-1,0)\not\in\maxexcited{(v,w)}$. If ${\sf a}+(0,1)\in\maxexcited{(v,w)}$, then ${\sf a}(2)+1={\sf b}(2)$ by the argument above. By {\UDity} and minimality, this implies ${\sf a}(1)={\sf b}(1)+1$, so ${\sf a}+(-1,1)={\sf b}$. Then $({\sf b}(1),{\sf a}(2)) \in \maxexcited{(v,w)}$ or else $w\not\in\avoid{n}$. Alternatively, if ${\sf a}+(0,1)\not\in\maxexcited{(v,w)}$, then ${\sf a}+(-1,1)\in\maxexcited{(v,w)}$ or else $\maxexcited{(v,w)}\neq \phi_v(D^{NE}(v,w))$. By minimality, this implies ${\sf a}+(-1,1)={\sf b}$. Since ${\sf word}(D^{NE}(v,w))$ is a reduced word and ${\sf a}+(0,1)\not\in\maxexcited{(v,w)}$, this implies $({\sf b}(1),{\sf a}(2)) \in \maxexcited{(v,w)}$. Thus in either case, we reach the contradiction that $({\sf b}(1),{\sf a}(2)) \in \maxexcited{(v,w)}$.
\end{proof}

\begin{lemma}\label{prop:orgRed2}
    Let $v\geq w$ where $v,w\in\avoid{n}$ such that $v,w$ are {\UD}. Let $\mdCR{v,w}=\{\Lambda^{(i)}\}_{i=0}^\ell$ and set $\Lambda^{(\ell+1)}:=\emptyset$.  Suppose $D\in \kskewexcited{(v,w)}$.  Then there exists some $D'\in \kskewexcited{(v,w)}$ such that the following hold:
\begin{enumerate}
    \item $\#D=\#D'$.
    \item We have $\Pos{\Psi(D')}\subseteq \mdCR{v,w}$. Equivalently, if ${\sf b}\in\Pos{\Psi(D')}$ is weakly above $\Lambda^{(j)}$ 
    then ${\sf b}\in \Lambda^{(i)}$ for some $0\leq i\leq j\leq \ell+1$. 
    \item Suppose  ${\sf b}\in\Pos{\Psi(D')}$ has $\Lambda^{(j)}$ to its lower left where $0\leq j\leq \ell+1$ is minimal.
    Then if ${\sf c}\in \Lambda^{(i)}$ is to the upper right of ${\sf b}$ for some $i$, where ${\sf b}\neq{\sf c}$, then $(\Psi(D'))({\sf c})=\Tab{v,w}^{(i)}({\sf c})$.
\end{enumerate}
\end{lemma}
\begin{proof}
Let $\Psi(D)=U $.
 By Remark~\ref{rem:org}, without loss of generality,  we assume  $D=D_U$.

We prove (1)--(3) for $0\leq i\leq j\leq \ell$, by induction on $j$.
We first consider the base case of $j=0$.
Suppose there exists a subset $Q\subseteq \Pos{U}$ such that $Q$ is to the upper right of 
$\Lambda^{(0)}$. 
Then take $\Delta=\Lambda^{(0)}$ and apply Lemma~\ref{prop:Nsimple2} to output the desired $D'$. This proves (1) and (2). Here (3) holds trivially.

Now suppose (1)--(3) hold for some $0\leq j\leq \ell$.
Suppose there exists $Q\subset \Pos{U}$ such that $Q$ is weakly above
$\Lambda^{(j+1)}$ and strictly below $\Lambda^{(j)}$. 
Further assume $Q\subset \Pos{U}$ contains a diagonal of length greater than $|\Lambda^{(j+1)}|$. Let $\Gamma$ denote the lower left-most such diagonal in $Q$. 
By Lemma~\ref{prop:Nsimple2}, without loss of generality, we assume there are no boxes in $Q$ above $\Gamma$.

Let $\Gamma=\{{\sf q}_k\}_{k\in[K]}$ where $K\in\mathbb{Z}_{\geq 0}$
and indices increase from upper left to bottom right. 
Define the subset of distinct boxes $\{{\sf d}_{k}\}_{k\in[K]}\subset \Lambda^{(j)}$, ordered from upper left to bottom right, such that the following hold:
\begin{enumerate}
    \item[(a)] $\gap({\sf q}_k,{\sf d}_{k})=0$ for each $k\in[K]$
    \item[(b)] $\{{\sf d}_{k}\}_{k\in[K]}$ is the leftmost possible choice satisfying (a).
\end{enumerate}
By definition of $\mdCR{v,w}$ in terms of maximal length, bottom leftmost diagonals, we see $|\Gamma|<|\Lambda^{(j)}|$. Further, by definition of $\mdCR{v,w}$ and Claim~\ref{claim:NWhook}, there is ${\sf d}_{k}\in \Lambda^{(j)}$
such that ${\sf q}_k(2)={\sf d}_k(2)$ and $\gap({\sf q}_k,{\sf d}_{k})=0$ for each $k\in[K]$. 
Thus such a subset exists.

Define  $\Omega_2\in\dTab(\maxexcited{(v,w)})$ such that for ${\sf b}\in \maxexcited{(v,w)}$:
\[
\Omega_2({\sf b})=
\begin{cases}
    \min{(U({\sf q}_k)+ U({\sf d}_{k}), \, \Tab{v,w}^{(0)}({\sf d}_{k}))}& \text{ if }{\sf b}={\sf d}_{k},\\
U({\sf q}_k)-(\Omega_2({{\sf d}_{k}})-U({{\sf d}_{k}})) & \text{ if }{\sf b}={\sf q}_k, \\
U({\sf b}) & \text{ else}.       
\end{cases}
\]

Since $U\in\Psi(\kskewexcited{(v,w)})$,  Construction~\ref{const:org} applies at most $\Tab{v,w}^{(0)}({\sf b})$ moves to each box ${\sf b}\in \maxexcited{(v,w)}$ to construct $D$.
Thus by Claim~\ref{claim:preImg}, for ${\sf q}_k\in Q$, $U({\sf q}_k)+U({\sf d}_k)\leq \Tab{v,w}^{(0)}({\sf q}_k)$ and $U({\sf d}_k)\leq \Tab{v,w}^{(0)}({\sf d}_k)$. 
Note that by definition of $\Tab{v,w}^{(i)}$ that if $\{{\sf c}_i\}_{i=0}^j\subset \maxexcited{(v,w)}$ is such that ${\sf c}_{j}\in \Lambda^{(j)}$ and ${\sf c}_{i-1}$ is the box $\Lambda^{(i-1)}$ minimally east of ${\sf c}_{i}$ for $i\leq j$, then
\[\Tab{v,w}^{(0)}({\sf c}_j)=\sum_{i=0}^j \Tab{v,w}^{(i)}({\sf c}_i).\]
This follows by a straightforward induction using the definition of $\Tab{v,w}^{(i)}$.
Similarly, since ${\sf d}_{k}$ and ${\sf q}_{k}$ lie in the same column,  
we find
\[\Tab{v,w}^{(0)}({\sf q}_k)=\Tab{v,w}^{(0)}({\sf d}_k)+\Tab{v,w}^{(j)}({\sf q}_k)-\Tab{v,w}^{(j)}({\sf d}_k)\]

Then by construction, 
$\Omega_2({\sf q}_k)\leq \Tab{v,w}^{(j+1)}({\sf q}_k)$ if $\Tab{v,w}^{(j)}({\sf q}_k)-\Tab{v,w}^{(j)}({\sf d}_k)>0$, and otherwise $\Omega_2({\sf q}_k)=0$.
Thus the number of moves required to apply  to ${\sf q}_k$ by $\Omega_2$ under Construction~\ref{const:org} is weakly less than that for ${\sf F}_{v,w}$. Similarly, the number of moves at a box ${\sf b}$ required to apply 
Construction~\ref{const:org} for $\Omega_2$ is weakly less than that for ${\sf F}_{v,w}$ for boxes below $\Lambda^{(j)}$. By construction of $\Omega_2$, the number of moves at a box ${\sf b}$ required to apply 
Construction~\ref{const:org} to $\Omega_2$ is weakly less than that for ${\sf F}_{v,w}$ for boxes above $\Lambda^{(j)}$. 
Thus by  Claim~\ref{claim:preImg}, $\Omega_2\in \Psi(\kskewexcited{(v,w)})$. By construction, $\#D_{\Omega_2}=\#D$.

Note that by definition of $D^{(j+1)}$ in Construction~\ref{constr:tab} and the construction of $\Omega_2$, the longest diagonal in $\Pos{\Omega_2}\cap Q$ cannot be longer than $|\Lambda^{(j+1)}|$.
We then apply Lemma~\ref{prop:Nsimple2} to $Q-\Gamma$, using $\Delta=\Lambda^{(j+1)}$, to obtain $U_2\in\Psi(\kskewexcited{(v,w)})$.

We define the following map for $i\in[j+1]$ on tableaux $V_i$, which we construct iteratively. Take $V_{j+1}:=U_2$.
Let $\Pos{V_i}\cap \Lambda^{(i)}=\{{\sf q}_{k}'\}_{k\in[K']}$.
Define the subset of distinct boxes $\{{\sf d}_{k}'\}_{k\in[K']}\subset \Lambda^{(i-1)}$, ordered from upper left to bottom right, such that the following hold:
\begin{enumerate}
    \item[(a)] $\gap({\sf q}_k',{\sf d}_{k}')=0$ for each $k\in[K']$
    \item[(b)] $\{{\sf d}_{k}'\}_{k\in[K']}$ is the leftmost possible choice satisfying (a).
\end{enumerate}
The existence of this subset follows by the same argument as the previous construction.
 Then let  $\Omega_3^{(i)}$ be the map on $V_i$ such that for ${\sf b}\in \maxexcited{(v,w)}$:
\[
\Omega_3^{(i)}({\sf b})=
\begin{cases}
    \min{(V_i({\sf q}_k')+ V_i({\sf d}_{k}'), \, \Tab{v,w}^{(0)}({\sf d}_{k}'))}& \text{ if }{\sf b}={\sf d}_{k}',\\
V_i({\sf q}_k')-(\Omega_3^{(i)}({{\sf d}_{k}'})-V_i({{\sf d}_{k}'})) & \text{ if }{\sf b}={\sf q}_k', \\
V_i({\sf b}) & \text{ else}.       
\end{cases}
\]
Take $V_{i-1}:=\Omega_3^{(i)}(V_i)$.  
By a similar argument to $\Omega_2$, we see that $V_i\in \Psi(\kskewexcited{(v,w)})$ and $\#D_{V_i}=\#D$ for $0\leq i\leq j+1$.  By construction, (1)--(3) hold for $V_0$ by the inductive assumption, so the result follows by induction.
\end{proof}

\begin{proof}[Proof of Theorem~\ref{thm:321321IndDeg}]
By construction, $\#D_{\tt zip}^K(v,w)=\CM{(v,w)}$. By Claim~\ref{claim:fav}, we know
$D_{\tt zip}^K(v,w)\in \kskewexcited{(v,w)}$, so this proves the lower bound.

Now assume $v,w$ are {\UD}.
It remains to show that $D_{\tt zip}^K(v,w)$ is maximal, i.e., $\CM{(v,w)}=\max\{\# D \, : \, D\in\kskewexcited{(v,w)}\}$. 
Take $\mdCR{v,w}=\{\Lambda^{(i)}\}_{i=0}^\ell$ and
suppose $D\in\kskewexcited{(v,w)}$ be maximal. 
By Claim~\ref{claim:preImg} and Lemma~\ref{prop:orgRed2}, without loss of generality, we can assume $\Pos{\Psi(D)}\subseteq \mdCR{v,w}$. Further, we assume $(\Psi(D))({\sf d})\leq \Tab{v,w}^{(j)}({\sf d})$ for each ${\sf d}\in \Lambda^{(j)}$. 
Thus, \[\#D=\sum_{i=0}^\ell\sum_{{\sf d}\in \Lambda^{(i)}}(\Psi(D)({\sf d}))\leq \sum_{i=0}^\ell\sum_{{\sf d}\in \Lambda^{(i)}}\Tab{v,w}^{(i)}({\sf d})=\CM{(v,w)},\]
so the result follows.
\end{proof}

\subsection{Proof of Theorem~\ref{thm:321321IndReg} and Corollary~\ref{cor:321321IndAInv}}\label{sec:321321reg}
Using the results of the previous subsection, we can prove our main result.

\noindent
\emph{Proof of Theorem~\ref{thm:321321IndReg}:}
  This follows from combining Proposition~\ref{prop:pipeKLreg} and Theorem~\ref{thm:321321IndDeg}.\qed

\noindent
\emph{Proof of Corollary~\ref{cor:321321IndAInv}:}
    This follows from Corollary~\ref{cor:pipeKLainv} and Theorem~\ref{thm:321321IndDeg}.\qed

\begin{example}\label{ex:showThm}
 In Example~\ref{ex:TvwConstr}, we computed $\CM{(v,w)}=9$. As noted in Example~\ref{ex:upperDec}, $v,w$ are {\UD}. Thus
Theorem~\ref{thm:321321IndDeg} determines \[\deg( \mathfrak G_{v,w}(\mathbf t))=\#D_{\tt zip}^K(v,w)=16+5+8=29.\]
Theorem~\ref{thm:321321IndReg} and  Corollary~\ref{cor:321321IndAInv} imply
\begin{align*}
    \reg (\mathbb{C}[{\bf z}^v]/J_{v,w})&=\#D_{\tt zip}^K(v,w)-\ell(w)=30-21=9, \text{ and}\\
    a(\mathbb{C}[{\bf z}^v]/J_{v,w})&=\#D_{\tt zip}^K(v,w)-\ell(v)=30-50=-20.
\end{align*}
We see $\reg (\mathbb{C}[{\bf z}^v]/J_{v,w})$ counts the bold blue pluses in $D_{\tt zip}^K(v,w)$ and $|a(\mathbb{C}[{\bf z}^v]/J_{v,w})|$ counts the empty boxes in $D_{\tt zip}^K(v,w)$.
\end{example}

We now consider a non-{\UD} example, when $D_{\tt zip}^K(v,w)$ gives a lower bound:
\begin{example}\label{ex:lowerBdEx}
 We return to $v,w$ as in Example~\ref{ex:DtopCC}. As noted in Example~\ref{ex:upperDec}, $v,w$ are not {\UD}.
 We compute $\Tab{v,w}^{(0)}$ shaded within $\codePD{v}$, given on the left. In this case, $\mdCR{v,w}=\{\Lambda^{(0)}\}$. The second diagram from the left is  ${\sf F}_{v,w}$.
 The third is $D_{\tt zip}^K(v,w)$. However, this is not maximal. We find such a maximal diagram to the right. In both of these diagrams, the bold blue pluses are the pluses arising from $K$-excited moves.
 \[\begin{picture}(500,80)
\put(0,65){\Small\ytableausetup
{boxsize=1em}
{\begin{ytableau}
 \ &  \ &  *(lightgray!50)1 &  *(lightgray!50)1 \\
 \ &  \ & *(lightgray!50)1 &  *(lightgray!50)1 &  \  &  *(lightgray!50)2 &  *(lightgray!50)2 &  *(lightgray!50)2 \\
 \ &  \ &  *(lightgray!50)1 &  *(lightgray!50)1 &  \  &  *(lightgray!50)2 &  *(lightgray!50)2 &  *(lightgray!50)2 \\
 \ &  \ &  \ &  \ &  \  &  \ &  \ &  \ \\
 \none &  \ &  \ &  \ &  \  &  *(lightgray!50)1 &  *(lightgray!50)1 &  *(lightgray!50)1 \\
  \none &  \none &  \none &    \ & \ &  \ &  \ &  \ 
\end{ytableau}}}
\put(120,65){\Small\ytableausetup
{boxsize=1em}
{\begin{ytableau}
 \ &  \ &  0 &  0 \\
 \ &  \ & \bf{1} &  0 &  \  &  0 &  0 &  0 \\
 \ &  \ &  0 &  \bf{1} &  \  &  0 &  0 &  0 \\
 \ &  \ &  \ &  \ &  \  &  \ &  \ &  \ \\
 \none &  \ &  \ &  \ &  \  &  \bf{1} &  0 &  0 \\
  \none &  \none &  \none &    \ & \ &  \ &  \ &  \ 
\end{ytableau}}}
\put(240,65){\Small\ytableausetup
{boxsize=1em}
{\begin{ytableau}
 \ &  \ &  + &  + \\
 \ &  \ &  + &  + &  \  &  + &  + &  + \\
 \ &  \textcolor{blue}{\Plus} &  \ &   + &  \  &  + &  + &  + \\
 \ &  + &  \textcolor{blue}{\Plus} &  \ &  \  &  \ &  \ &  \ \\
 \none &  \ &  \ &  \ &  \  &  + &  + & + \\
  \none &  \none &  \none &    \ & \textcolor{blue}{\Plus} &  \ &  \ &  \ 
\end{ytableau}}}
\put(360,65){\Small\ytableausetup
{boxsize=1em}
{\begin{ytableau}
   \ &  \ &  + &  + \\
 \ &  \textcolor{blue}{\Plus} &  \ &  + &  \  &  + &  + &  + \\
 \ &  + &  \textcolor{blue}{\Plus} &  \ &  \textcolor{blue}{\Plus}  &  \ &  + &  + \\
 \ &  + &  + &  \textcolor{blue}{\Plus} &  \  &  \textcolor{blue}{\Plus} &  \ &  \ \\
 \none &  \ &  \ &  + &  \textcolor{blue}{\Plus}  &  \ &  \ &  + \\
  \none &  \none &  \none &    \ & + &  + &  \textcolor{blue}{\Plus} &  \  
\end{ytableau}}}
\end{picture}
\]
Applying Corollary~\ref{cor:unspGrSEYD} and
Theorem~\ref{thm:321321IndDeg} to these computations, we find \[\deg( \mathfrak G_{v,w}(\mathbf t))=22> \#D_{\tt zip}^K(v,w)=18.\]
Theorem~\ref{thm:321321IndReg} and  Corollary~\ref{cor:321321IndAInv} compute
\begin{align*}
    \reg (\mathbb{C}[{\bf z}^v]/J_{v,w})&=7=22-15> \#D_{\tt zip}^K(v,w)-\ell(w)=3, \text{ and}\\
    a(\mathbb{C}[{\bf z}^v]/J_{v,w})&=-26=22-48> \#D_{\tt zip}^K(v,w)-\ell(v)=-30.
\end{align*}
Thus, the algorithmic lower bound given by $D_{\tt zip}^K(v,w)$ can be improved in this case.
\end{example}

\begin{remark}
   A previous version of this manuscript claimed an algorithm for all $v,w\in \avoid{n}$. While that algorithm is not correct, we expect similar techniques can be used for a revised algorithm. However, the choice of diagonals $\mdCR{v,w}$ will be more subtle.
\end{remark}

\section{Regularity of Ladder Determinantal Varieties}\label{sec:regLadd}
In this section we use the result of L.~Escobar, A.~Fink, J.~Rajchgot, and A.~Woo \cite{EFRW} which states two-sided ladder determinantal varieties are Kazhdan--Lusztig varieties indexed by $v, w\in\avoid{n}$. In this setting, $v,w$ are particular {\UD} pairs. We give translations of Theorem~\ref{thm:321321IndReg} and Corollary~\ref{cor:321321IndAInv} accordingly.

Lastly in this two-sided ladder case, we reformulate Theorem~\ref{thm:321321IndReg} and Corollary~\ref{cor:321321IndAInv} in terms of lattice paths.
This generalizes work of S.~R.~Ghorpade and C.~Krattenthaler  \cite{GK15}.

\subsection{Ladder Determinantal Varieties}
A \mydef{ladder region} $L$ is a skew Young diagram $\lambda/\mu$. We assume $\lambda$ and $\mu$ have $\ell(\lambda)$ non-negative parts.
For $L=\lambda/\mu$,
we define the \mydef{perimeter} of $L$ as $2n$, where $n=\lambda_1+\lambda'_1$, 
{\emph{i.e.}}, the number of boxes in the first row plus the number of boxes in the first column of $\lambda$.

A ladder region $L$ is equivalently determined by its lower left corners $L^{\sf SW} = \{\alpha_i\}_{i\in [s]}$ and upper right corners $L^{\sf{NE}} = \{\beta_i\}_{i\in [t]}$, with points ordered upper left to bottom right. 
Define $\alpha_{0}=(0,0)$ to be the upper leftmost corner of $L$
and let $\alpha_{s+1}$ denote the bottom rightmost corner of $L$. For a point ${\gamma}$ in $L$ write $\gamma=(\gamma(1),\gamma(2))$.
A box ${\sf b}$ in $L$ inherits the label of its bottom right corner.

Let $\mathcal{M}=\{({\sf{p}}_i,r_i)\}_{i\in[s']}$ denote a set of marked points along the lower left border of $L$ where $r_i\in \mathbb{Z}_{> 0}$. Points in $\mathcal{M}$ are ordered from upper left to bottom right.

 Define $L(z)$ as the filling of each $(i,j)\in L$ with indeterminate $z_{ij}$. Take $\mathbb{C}[L(z)]$ the polynomial ring generated by entries in $L(z)$. 
Define the \mydef{two-sided mixed ladder determinantal ideal} $I_{L,\mathcal{M}}$:
\[I_{L,\mathcal{M}} := \langle r_i - \text{minors }  \text{ in } L_{[{\sf{p}}_i(1)],[{\sf{p}}_i(2)+1,\alpha_{s+1}(2)]}(z) \, : \, ({\sf{p}}_i,r_i)\in \mathcal{M}\rangle\subseteq \mathbb{C}[L(z)],\]
where $L_{I,J}(z)$ denotes the submatrix of $L(z)$ with row indices in $I$ and column indices in $J$ for $I,J\subseteq[n]$. 
 The \mydef{two-sided mixed ladder determinantal variety} has coordinate ring $X_{L,\mathcal{M}}:=\mathbb{C}[L(z)]/I_{L,\mathcal{M}}$.
Taking $L=\lambda$, \emph{i.e.}, when $\mu=\emptyset$, produces a \mydef{one-sided mixed ladder determinantal variety}. 
Define $(L,{\mathcal M})$ to be \mydef{minimal ladder} if
\begin{enumerate}
    \item each $z_{ij}\in L(z)$ appears in a monomial of a generator in $I_{L,\mathcal{M}}$, 
    \item $ 0<{\sf{p}}_1(1)-r_1<{\sf{p}}_2(1)-r_2<\dots<{\sf{p}}_{s'}(1)-r_{s'}$, and
    \item $ 0<{\sf{p}}_1(2)-r_1<{\sf{p}}_2(2)-r_2<\dots<{\sf{p}}_{s'}(2)-r_{s'}$.
\end{enumerate}
It is straightforward to reduce any two-sided ladder to a minimal two-sided ladder.

\begin{example}\label{ex:2sidelad}
Let $L=\lambda/\mu$, where $\lambda=(5,5,5,5,2,2)$ and $\mu=(2,1,0,0,0,0)$. Then $L^{\sf SW}=\{(4,0),(6,3)\}$ and $L^{\sf NE}=\{(0,3),(1,4),(2,5)\}$. Below is $L(z)$ with marked points $\mathcal{M}=\{((4,0),3),((4,2),2),((6,3),2)\}$ drawn in red.

\[\begin{picture}(100,90)
\put(0,0){\begin{tikzpicture}[scale=.52]
\draw[line width = .1ex, gray] (0,5) -- (3,5);
\draw[line width = .1ex, gray] (0,4) -- (4,4);
\draw[line width = .1ex, gray] (0,3) -- (5,3);
\draw[line width = .1ex, gray] (3,2) -- (5,2);
\draw[line width = .1ex, gray] (3,1) -- (5,1);

\draw[line width = .1ex, gray] (1,6) -- (1,2);
\draw[line width = .1ex, gray] (2,6) -- (2,2);
\draw[line width = .1ex, gray] (3,5) -- (3,2);
\draw[line width = .1ex, gray] (4,4) -- (4,0);

\draw[line width = .25ex] (0,2)--(3,2)--(3,0)--(5,0)--(5,4)--(4,4)--(4,5)--(3,5)--(3,6)--(0,6)--(0,2);
\filldraw[red] (0,2) circle (1ex);
\filldraw[red] (2,2) circle (1ex);
\filldraw[red] (3,0) circle (1ex);
\put(1,80){{\Small$z_{11}$}}
\put(15,80){{\Small$z_{12}$}}
\put(30,80){{\Small$z_{13}$}}
\put(1,65){{\Small$z_{21}$}}
\put(15,65){{\Small$z_{22}$}}
\put(30,65){{\Small$z_{23}$}}
\put(45,65){{\Small$z_{24}$}}
\put(1,50){{\Small$z_{31}$}}
\put(15,50){{\Small$z_{32}$}}
\put(30,50){{\Small$z_{33}$}}
\put(45,50){{\Small$z_{34}$}}
\put(60,50){{\Small$z_{35}$}}
\put(1,35){{\Small$z_{41}$}}
\put(16,35){{\Small$z_{42}$}}
\put(30,35){{\Small$z_{43}$}}
\put(45,35){{\Small$z_{44}$}}
\put(60,35){{\Small$z_{45}$}}
\put(45,20){{\Small$z_{54}$}}
\put(60,20){{\Small$z_{55}$}}
\put(45,5){{\Small$z_{64}$}}
\put(60,5){{\Small$z_{65}$}}
\put(-9,20){{$\textcolor{red}3$}}
\put(25,18){{$\textcolor{red}2$}}
\put(35,-5){{$\textcolor{red}2$}}
\end{tikzpicture}}
\end{picture}
\]
 Then $ I_{L,\mathcal M}=\langle 3-\text{minors of } L_{[4],[5]}(z), 2-\text{minors of } L_{[4],\{3,4,5\}}(z) ,2-\text{minors of } L_{[6],\{4,5\}}(z)  \rangle$.
\end{example}

\subsection{Two-sided ladders and Kazhdan--Lusztig Varieties}
Let $(L,{\mathcal M})$ be a minimal two-sided ladder where  $L=\lambda/\mu$, ${\mathcal M}=\{({\sf{p}}_i,r_i)\}_{i\in[s']}$, and $L$ has perimeter $2n$. 
Define $s_v\in\mathbb{Z}_{\geq 0}^n$ as the sequence
\[s_v:=(\lambda_1-\mu_1,0^{\lambda_1-\lambda_2},\lambda_2-\mu_2,0^{\lambda_2-\lambda_3},
\ldots,\lambda_{\ell(\lambda)}-\mu_{\ell(\lambda)},0^{\lambda_{\ell(\lambda)}}).\]
Let $v\in S_n$ be the unique permutation such that ${\sf code}(v)=s_v$.
Suppose 
$L^{\sf{NE}} = \{\beta_j\}_{j\in [t]}$. 
Then take $w\in S_n$ to be the minimal length permutation satisfying  \[{\sf rank}_w((\|{\sf{p}}_i\|,\|{\beta}_j\|))=\min\big(\{\|{\sf{p}}_i\|,\|{\beta}_j\|,{\sf rank}_v((\|{\sf{p}}_i\|,\|{\beta}_j\|))+r_i-1\}\big)\]
for each ${i\in[s'],j\in[t]}$.
Here $\|\gamma\|=\gamma(1)+\gamma(2)$ for a point $\gamma$.
We define $\perm(L,\mathcal{M})=(v,w)$. 

This formula to compute $\perm(L,\mathcal{M})$ refines the formula in \cite[Theorem~4.7.3]{GM} for the one-sided ladder case.

\begin{example}\label{ex:permForLadd} Let $L$ and $\mathcal{M}$ be as in Example~\ref{ex:2sidelad}. Below are $D(v)$ and $D(w)$ such that $(v,w)=\perm(L,\mathcal{M})$. In $D(w)$, the positions $\{(\|{\sf{p}}_i\|,\|{\beta}_j\|)\}_{i\in[s'],\\ j\in[t]}$ are shaded.

 \[
\begin{tikzpicture}[scale=.32]
\draw (0,0) rectangle (11,11);

 \draw (0,11) rectangle (3,7);
 \draw (4,10) rectangle (5,7);
 \draw (6,9) rectangle (7,7);

\draw (4,4) rectangle (5,2);
 \draw (6,4) rectangle (7,2);
 
 \draw (0,10)--(3,10);
\draw (0,9)--(3,9);
 \draw (0,8)--(3,8);
\draw (1,11)--(1,7);
\draw (2,11)--(2,7);

\draw (5,9)--(4,9);
\draw (5,8)--(4,8);

\draw (7,8)--(6,8);

\draw (4,3)--(5,3);
\draw (6,3)--(7,3);

\filldraw (3.5,10.5) circle (.5ex);
\draw[line width = .2ex] (3.5,0) -- (3.5,10.5) -- (11,10.5);
\filldraw (5.5,9.5) circle (.5ex);
\draw[line width = .2ex] (5.5,0) -- (5.5,9.5) -- (11,9.5);
\filldraw (7.5,8.5) circle (.5ex);
\draw[line width = .2ex] (7.5,0) -- (7.5,8.5) -- (11,8.5);
\filldraw (8.5,7.5) circle (.5ex);
\draw[line width = .2ex] (8.5,0) -- (8.5,7.5) -- (11,7.5);
\filldraw (0.5,6.5) circle (.5ex);
\draw[line width = .2ex] (0.5,0) -- (0.5,6.5) -- (11,6.5);
\filldraw (1.5,5.5) circle (.5ex);
\draw[line width = .2ex] (1.5,0) -- (1.5,5.5) -- (11,5.5);
\filldraw (2.5,4.5) circle (.5ex);
\draw[line width = .2ex] (2.5,0) -- (2.5,4.5) -- (11,4.5);
\filldraw (9.5,3.5) circle (.5ex);
\draw[line width = .2ex] (9.5,0) -- (9.5,3.5) -- (11,3.5);
\filldraw (10.5,2.5) circle (.5ex);
\draw[line width = .2ex] (10.5,0) -- (10.5,2.5) -- (11,2.5);
\filldraw (4.5,1.5) circle (.5ex);
\draw[line width = .2ex] (4.5,0) -- (4.5,1.5) -- (11,1.5);
\filldraw (6.5,0.5) circle (.5ex);
\draw[line width = .2ex] (6.5,0) -- (6.5,0.5) -- (11,0.5);
\end{tikzpicture}
\hspace{5em} 
\begin{tikzpicture}[scale=.32]
\fill[lightgray]  (3,8) rectangle (2,7);
\fill[lightgray]  (5,8) rectangle (4,7);
\fill[lightgray]  (7,8) rectangle (6,7);

\fill[lightgray]  (3,6) rectangle (2,5);
\fill[lightgray]  (5,6) rectangle (4,5);
\fill[lightgray]  (7,6) rectangle (6,5);

\fill[lightgray]  (3,3) rectangle (2,2);
\fill[lightgray]  (5,3) rectangle (4,2);
\fill[lightgray]  (7,3) rectangle (6,2);

\draw (3,7) rectangle (2,9);
\draw (3,8)--(2,8);

 \draw (6,5) rectangle (7,6);
 
 \draw (4,5) rectangle (5,6);

 \draw (4,7) rectangle (5,8);

\filldraw (0.5,10.5) circle (.5ex);
\draw[line width = .2ex] (0.5,0) -- (0.5,10.5) -- (11,10.5);
\filldraw (1.5,9.5) circle (.5ex);
\draw[line width = .2ex] (1.5,0) -- (1.5,9.5) -- (11,9.5);
\filldraw (3.5,8.5) circle (.5ex);
\draw[line width = .2ex] (3.5,0) -- (3.5,8.5) -- (11,8.5);
\filldraw (5.5,7.5) circle (.5ex);
\draw[line width = .2ex] (5.5,0) -- (5.5,7.5) -- (11,7.5);
\filldraw (2.5,6.5) circle (.5ex);
\draw[line width = .2ex] (2.5,0) -- (2.5,6.5) -- (11,6.5);
\filldraw (7.5,5.5) circle (.5ex);
\draw[line width = .2ex] (7.5,0) -- (7.5,5.5) -- (11,5.5);
\filldraw (4.5,4.5) circle (.5ex);
\draw[line width = .2ex] (4.5,0) -- (4.5,4.5) -- (11,4.5);
\filldraw (6.5,3.5) circle (.5ex);
\draw[line width = .2ex] (6.5,0) -- (6.5,3.5) -- (11,3.5);
\filldraw (8.5,2.5) circle (.5ex);
\draw[line width = .2ex] (8.5,0) -- (8.5,2.5) -- (11,2.5);
\filldraw (9.5,1.5) circle (.5ex);
\draw[line width = .2ex] (9.5,0) -- (9.5,1.5) -- (11,1.5);
\filldraw (10.5,0.5) circle (.5ex);
\draw[line width = .2ex] (10.5,0) -- (10.5,0.5) -- (11,0.5);
\draw (0,0) rectangle (11,11);
\end{tikzpicture}
\vspace*{-0.5cm}
\]
\end{example}

One-sided ladder determinantal varieties are isomorphic to vexillary matrix Schubert varieties, see \cite{GM,KMY}. In general, two-sided ladder determinantal varieties are not isomorphic to matrix Schubert varieties. For example, if $(L,{\mathcal M})$ is
as in Example~\ref{ex:2sidelad}, $X(L,{\mathcal M})$ is not isomorphic to a matrix Schubert variety.
As proven by L.~Escobar, A.~Fink, J.~Rajchgot, and A.~Woo, all two-sided ladders can realized as Kazhdan--Lusztig varieties:
\begin{theorem}\cite{EFRW}\label{prop:2sidedKL}
    Given $(L,{\mathcal M})$ minimal, suppose $\perm(L,\mathcal{M})=(v,w)$ and $L$ has perimeter $2n$. Then the following hold:
    \begin{enumerate}
        \item  $v,w\in \avoid{n}$ where $v\geq w$, and
        \item $I_{L,\mathcal{M}}$ and $J_{v,w}$ have the same set of generators.
    \end{enumerate}
\end{theorem}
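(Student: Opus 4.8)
The statement has a combinatorial half, claim (1) about $v$ and $w$, and an algebraic half, claim (2) that $I_{L,\mathcal M}$ and $J_{v,w}$ have the same generating set; the plan is to prove (1) first, because the identification of variables it yields is exactly what turns (2) into a finite bookkeeping problem about minors.

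For (1), I would start by reading $v$ off its code. The nonzero entries of $s_v$ sit in the positions $q_j := j+\lambda_1-\lambda_j$ for $j\in[\ell(\lambda)]$, carrying the value $\lambda_j-\mu_j$, and all other entries are $0$; from this one writes the one-line notation of $v$ explicitly and sees that $v$ is a union of two increasing subsequences (the values in the rows indexed by the $q_j$ and the values in the code-$0$ rows), hence $v\in\avoid n$. The same explicit description shows that row $q_j$ of $D(v)$ is a single contiguous block of $\lambda_j-\mu_j$ cells, positioned so that applying $\phi_v$ recovers precisely the skew shape $L=\lambda/\mu$ in the paper's reflected English-notation convention; I would record this identification $\codePD{v}=L$ along with the induced bijection between the variables of $L(z)$ and those of $\mathbb C[{\bf z}^v]$. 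Turning to $w$: conditions (2) and (3) of minimality force the cells $\{(\|{\sf p}_i\|,\|\beta_j\|)\}$ and the prescribed rank values to be compatible with a single rank function, so the Bruhat-minimal permutation realizing them exists and is unique (the standard recovery of a permutation from essential-set rank data, cf.\ \cite{Fulton.Flags}). One then checks $w\in\avoid n$ by verifying that $D(w)$ is again a skew shape — the southeast corners of its connected components track the staircase cut out by the ${\sf p}_i$ against the northeast corners $\beta_j$ — and checks $v\geq w$ by verifying ${\sf rank}_w(i,j)\geq{\sf rank}_v(i,j)$ for all $(i,j)$; at the prescribed cells this is immediate from $r_i\geq 1$ and ${\sf rank}_v\leq\min(\|{\sf p}_i\|,\|\beta_j\|)$, and elsewhere it follows from Bruhat-minimality of $w$ together with the fact that $v$ itself satisfies the corresponding inequalities.

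For (2), the identification $\codePD{v}=L$ lets me regard $M^{(v)}_{[i],[j]}$, for the relevant $(i,j)$, as the ladder submatrix $L(z)$ with $1$'s and $0$'s adjoined in the pattern given by the graph of $v$. I would then invoke the classical reduction of minors of a partial permutation matrix, in the form used for matrix Schubert and Kazhdan--Lusztig ideals: an $(r+1)$-minor of a window of $M^{(v)}$ containing exactly $k$ of the $1$-entries of $M^{(v)}$ in pivot position is, up to sign, an $(r+1-k)$-minor of the complementary $z$-submatrix, and every minor of that $z$-submatrix arises this way. Applied at the cell $(a,b)=(\|{\sf p}_i\|,\|\beta_j\|)$ — where $k={\sf rank}_v(a,b)$ of the $1$'s lie in the window and $({\sf rank}_w(a,b)+1)-k=r_i$ by the very definition of $w$ — this converts the $({\sf rank}_w(a,b)+1)$-minor generators of $J_{v,w}$ at that cell into exactly the $r_i$-minor generators of $I_{L,\mathcal M}$ on $L_{[{\sf p}_i(1)],[{\sf p}_i(2)+1,\alpha_{s+1}(2)]}(z)$, once the window $[a]\times[b]$ of $M^{(v)}$ is matched, after the $1$-reduction, with the window $[{\sf p}_i(1)]\times[{\sf p}_i(2)+1,\alpha_{s+1}(2)]$ of the ladder (here the $\|\cdot\|$-coordinates and the reflection convention flip the northwest window of $M^{(v)}$ into the northeast window of $L$). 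To close the argument one notes that $J_{v,w}$ is already generated by its essential minors \cite{WooYongGrobner}, that the essential set of $w$ is exactly the marked staircase $\{{\sf p}_i\}$ with the ranks above, and that minimality of $(L,\mathcal M)$ rules out redundant $r_i$-minor relations; hence the two generating sets coincide cell by cell.

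The step I expect to be the main obstacle is the precise handling of $w$ in (1) combined with the cell-by-cell matching in (2): one must show that the essential set of the Bruhat-minimal $w$ is exactly $\{{\sf p}_i\}$ with the expected ranks — no spurious essential cells, no missing relations — and that the displacement by $\|\cdot\|$-coordinates lines up the ladder windows $[{\sf p}_i(1)]\times[{\sf p}_i(2)+1,\alpha_{s+1}(2)]$ with the windows $[a]\times[b]$ of $M^{(v)}$ after the $1$-reduction. This is where the minimality hypotheses (1)--(3) on $(L,\mathcal M)$ and the reflected drawing convention must be used in an essential way.
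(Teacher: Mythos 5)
There is nothing in the paper to compare your proposal against: Theorem~\ref{prop:2sidedKL} is not proven in this paper at all. It is imported wholesale from the forthcoming work \cite{EFRW}, and the only original contribution here is the explicit formula for $\perm(L,\mathcal{M})$, which the author notes refines \cite[Theorem~4.7.3]{GM} from the one-sided case. So the question of whether you follow ``the paper's route'' is moot; the relevant yardstick is whether your sketch would stand on its own.

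As a standalone argument it has the right architecture --- reading $v$ from ${\sf code}(v)=s_v$ and identifying $\codePD{v}$ with $L$, producing $w$ from Fulton-type rank conditions, and reducing minors of $M^{(v)}$ across the pivot $1$'s to match the $r_i$-minor generators, exactly as one would expect by analogy with the one-sided ladder case \cite{GM,KMY} --- but it is an outline rather than a proof. The steps you defer are precisely the content of the theorem: that the Bruhat-minimal $w$ satisfying the rank conditions at the cells $(\|{\sf p}_i\|,\|\beta_j\|)$ exists, is $321$-avoiding, and has essential set exactly the marked staircase with the prescribed ranks (no spurious essential cells is where minimality conditions (1)--(3) must enter, and you do not say how); that $v\geq w$ holds at \emph{all} cells, not just the prescribed ones (``follows from Bruhat-minimality'' is an assertion, not an argument); and that the pivot-reduction of the $({\sf rank}_w+1)$-minors of $M^{(v)}_{[i],[j]}$ yields \emph{literally} the same generating set as $I_{L,\mathcal{M}}$, window by window, rather than merely an equality of ideals --- which requires the careful $\|\cdot\|$-coordinate bookkeeping you yourself flag as the main obstacle. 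Since you identify these points as open rather than resolving them, the proposal should be regarded as a plausible strategy consistent with the cited result, not a verification of it; for this paper's purposes the statement simply rests on \cite{EFRW}.
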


\subsection{Specializing Theorem~\ref{thm:321321IndReg}}
When $(v,w)=\perm(L,\mathcal{M})$ for $(L,{\mathcal M})$ minimal, diagrams in $\kskewexcited(v,w)$ exhibit additional structure.  This allows us to re-frame Theorem~\ref{thm:321321IndReg} and Corollary~\ref{cor:321321IndAInv} in terms of lattice paths in $L$.

\begin{construction}[Computing boundary points]\label{alg:LaddPointCompute}
Take a minimal two-sided ladder $(L,{\mathcal M})$ where ${\mathcal M}=\{({\sf{p}}_i,r_i)\}_{i\in[s']}$ and $L^{\sf SW} = \{\alpha_i\}_{i\in [s]}$. 
For each $i\in[s-1]$ let 
\begin{align*}
    r_i^H&:=\min\{r_{i_j} \, : \, ({\sf p}_{i_j},r_{i_j})=((\alpha_i(1),y)),r_{i_j})\in  {\mathcal{M}}\}, \text{ and}\\
    r_{i}^V&:=\min\{r_{i_j} \, : \, ({\sf p}_{i_j},r_{i_j})=(x,\alpha_i(2)),r_{i_j})\in  {\mathcal{M}}\}.
\end{align*}
Initialize ${\mathcal M}'={\mathcal M}$.
For each $i\in[s-1]$, if 
$((\alpha_i(1),\alpha_{i+1}(2)),r)\not\in \mathcal{M}'$ for any $r\in\mathbb{Z}_{>0}$, append 
\[\big((\alpha_i(1),\alpha_{i+1}(2)),\min(r_i^H,r_{i+1}^V)\big)\]
to $\mathcal{M}'$. Lastly append $(\alpha_0,1)$ and $(\alpha_{s+1},1)$ to ${\mathcal{M}}'$. Partition ${\mathcal{M'}}=\bigcup_{i\in[s]}{\mathcal{M}}^{V}_i\cup {\mathcal{M}}^{H}_i$, where
\begin{align*}
    {\mathcal{M}}^{V}_i&:=\{({\sf p}_{i_j},r_{i_j}) \, : \, {\sf p}_{i_j}=(x,\alpha_i(2))\}, \text{ and}\\
   {\mathcal{M}}^{H}_i&:=\{({\sf p}_{i_j},r_{i_j}) \, : \, {\sf p}_{i_j}=(\alpha_i(1),y)\}.
\end{align*}
Points in
${\mathcal{M}}^{V}_i$ are ordered top to bottom and those in  ${\mathcal{M}}^{H}_i$ are ordered right to left.

Initialize $V(\mathcal{M})=\emptyset$ and  $H(\mathcal{M})=\emptyset$. 
Then iterate the following for each $i\in [s]$:
\begin{itemize}[\textbf{-}]
    \item For each 
    $j\in [\#{\mathcal{M}}^{V}_i-1]$, take $({\sf p}_{i_j},r_{i_j})\in {\mathcal{M}}^{V}_i$. If $r_{i_{j+1}}-r_{i_j}=k\geq 1$, append ${\sf p}_{i_j}+(k'-\frac{1}{2},0)$ to $V(\mathcal{M})$ for each $k'\in[k]$. 
    \item For each $j\in [\#{\mathcal{M}}^{H}_i-1]$, take
    $({\sf p}_{i_j},r_{i_j})\in {\mathcal{M}}^{H}_i$. If $r_{i_{j+1}}-r_{i_j}=k\geq 1$, append ${\sf p}_{i_j}+(0,-k'+\frac{1}{2})$ to $H(\mathcal{M})$ for each $k'\in[k]$. 
\end{itemize}
 This gives boundary points $V(\mathcal{M})$ and $H(\mathcal{M})$.

     Suppose $\#V(\mathcal{M})=\ell\in\mathbb{Z}_{\geq0}$. Then $\#H(\mathcal{M})=\ell$ by construction.
For $i\in[\ell]$, label points $H_i\in H(\mathcal{M})$ in increasing order from right to left.
For $i\in[\ell]$ in decreasing order, assign label $V_i$ to be the bottom-most point in $V(\mathcal{M})-\{V_j \, : \, \ell\geq j>i\}$ that lies to the upper left of $H_i$.
\end{construction}

\begin{example}\label{ex:laddAlgIll}
We illustrate Construction~\ref{alg:LaddPointCompute} below.
    The left diagram draws a ladder $L$ with the original marked points $\mathcal{M}$ bolded in red and $\mathcal{M}'-\mathcal{M}$ in light gray. The middle diagram adds $V(\mathcal{M})$ and $H(\mathcal{M})$ in bold black. The right diagram labels $V(\mathcal{M})$ and $H(\mathcal{M})$.

\[\begin{picture}(465,120)
\put(0,-5){\begin{tikzpicture}[scale=.45]
\draw[line width = .1ex, gray] (0,9) -- (8,9);
\draw[line width = .1ex, gray] (0,8) -- (8,8);
\draw[line width = .1ex, gray] (0,7) -- (10,7);
\draw[line width = .1ex, gray] (2,6) -- (10,6);
\draw[line width = .1ex, gray] (6,5) -- (10,5);
\draw[line width = .1ex, gray] (6,4) -- (10,4);
\draw[line width = .1ex, gray] (6,3) -- (10,3);
\draw[line width = .1ex, gray] (6,2) -- (10,2);
\draw[line width = .1ex, gray] (8,1) -- (10,1);

\draw[line width = .1ex, gray] (1,10) -- (1,6);
\draw[line width = .1ex, gray] (2,10) -- (2,6);
\draw[line width = .1ex, gray] (3,10) -- (3,5);
\draw[line width = .1ex, gray] (4,10) -- (4,5);
\draw[line width = .1ex, gray] (5,10) -- (5,5);
\draw[line width = .1ex, gray] (6,10) -- (6,5);
\draw[line width = .1ex, gray] (7,10) -- (7,2);
\draw[line width = .1ex, gray] (8,8) -- (8,2);
\draw[line width = .1ex, gray] (9,8) -- (9,0);

\draw[line width = .25ex] (0,6)--(0,10)--(8,10)--(8,8)--(10,8)--(10,0)--(8,0)--(8,2)--(6,2)--(6,5)--(2,5)--(2,6)--(0,6);
\put(-5,65){{$\textcolor{red}{\bf{3}}$}}
\put(17,52){{$\textcolor{red}{\bf{4}}$}}
\put(67,54){{$\textcolor{red}{\bf{3}}$}}
\put(67,22){{$\textcolor{red}{\bf{4}}$}}
\put(92,-2){{$\textcolor{red}{\bf{2}}$}}
\filldraw[red] (0,6) circle (1.25ex);
\filldraw[red] (2,5) circle (1.25ex);
\filldraw[red] (6,5) circle (1.25ex);
\filldraw[red] (6,2) circle (1.25ex);
\filldraw[red] (8,0) circle (1.25ex);
\put(-9,125){{$\textcolor{gray}1$}}
\put(17,66){{$\textcolor{gray}3$}}
\put(93,15){{$\textcolor{gray}2$}}
\put(130,-2){{$\textcolor{gray}1$}}
\filldraw[gray] (0,10) circle (0.75ex);
\filldraw[gray] (2,6) circle (0.75ex);
\filldraw[gray] (8,2) circle (0.75ex);
\filldraw[gray] (10,0) circle (0.75ex);
\end{tikzpicture}}
\put(160,-5){\begin{tikzpicture}[scale=.45]
\draw[line width = .1ex, gray] (0,9) -- (8,9);
\draw[line width = .1ex, gray] (0,8) -- (8,8);
\draw[line width = .1ex, gray] (0,7) -- (10,7);
\draw[line width = .1ex, gray] (2,6) -- (10,6);
\draw[line width = .1ex, gray] (6,5) -- (10,5);
\draw[line width = .1ex, gray] (6,4) -- (10,4);
\draw[line width = .1ex, gray] (6,3) -- (10,3);
\draw[line width = .1ex, gray] (6,2) -- (10,2);
\draw[line width = .1ex, gray] (8,1) -- (10,1);

\draw[line width = .1ex, gray] (1,10) -- (1,6);
\draw[line width = .1ex, gray] (2,10) -- (2,6);
\draw[line width = .1ex, gray] (3,10) -- (3,5);
\draw[line width = .1ex, gray] (4,10) -- (4,5);
\draw[line width = .1ex, gray] (5,10) -- (5,5);
\draw[line width = .1ex, gray] (6,10) -- (6,5);
\draw[line width = .1ex, gray] (7,10) -- (7,2);
\draw[line width = .1ex, gray] (8,8) -- (8,2);
\draw[line width = .1ex, gray] (9,8) -- (9,0);

\draw[line width = .25ex] (0,6)--(0,10)--(8,10)--(8,8)--(10,8)--(10,0)--(8,0)--(8,2)--(6,2)--(6,5)--(2,5)--(2,6)--(0,6);

\filldraw[red] (0,6) circle (0.75ex);
\filldraw[red] (2,5) circle (0.75ex);
\filldraw[red] (6,5) circle (0.75ex);
\filldraw[red] (6,2) circle (0.75ex);
\filldraw[red] (8,0) circle (0.75ex);
\filldraw[gray] (0,10) circle (0.75ex);
\filldraw[gray] (2,6) circle (0.75ex);
\filldraw[gray] (8,2) circle (0.75ex);
\filldraw[gray] (10,0) circle (0.75ex);

\filldraw[black] (0,9.5) circle (1.25ex);
\filldraw[black] (0,8.5) circle (1.25ex);
\filldraw[black] (2,5.5) circle (1.25ex);
\filldraw[black] (6,4.5) circle (1.25ex);

\filldraw[black] (5.5,5) circle (1.25ex);
\filldraw[black] (6.5,2) circle (1.25ex);
\filldraw[black] (7.5,2) circle (1.25ex);
\filldraw[black] (9.5,0) circle (1.25ex);
\end{tikzpicture}}
\put(320,-5){\begin{tikzpicture}[scale=.45]
\draw[line width = .1ex, gray] (0,9) -- (8,9);
\draw[line width = .1ex, gray] (0,8) -- (8,8);
\draw[line width = .1ex, gray] (0,7) -- (10,7);
\draw[line width = .1ex, gray] (2,6) -- (10,6);
\draw[line width = .1ex, gray] (6,5) -- (10,5);
\draw[line width = .1ex, gray] (6,4) -- (10,4);
\draw[line width = .1ex, gray] (6,3) -- (10,3);
\draw[line width = .1ex, gray] (6,2) -- (10,2);
\draw[line width = .1ex, gray] (8,1) -- (10,1);

\draw[line width = .1ex, gray] (1,10) -- (1,6);
\draw[line width = .1ex, gray] (2,10) -- (2,6);
\draw[line width = .1ex, gray] (3,10) -- (3,5);
\draw[line width = .1ex, gray] (4,10) -- (4,5);
\draw[line width = .1ex, gray] (5,10) -- (5,5);
\draw[line width = .1ex, gray] (6,10) -- (6,5);
\draw[line width = .1ex, gray] (7,10) -- (7,2);
\draw[line width = .1ex, gray] (8,8) -- (8,2);
\draw[line width = .1ex, gray] (9,8) -- (9,0);

\draw[line width = .25ex] (0,6)--(0,10)--(8,10)--(8,8)--(10,8)--(10,0)--(8,0)--(8,2)--(6,2)--(6,5)--(2,5)--(2,6)--(0,6);
\filldraw[black] (0,9.5) circle (1ex);
\filldraw[black] (0,8.5) circle (1ex);
\filldraw[black] (2,5.5) circle (1ex);
\filldraw[black] (6,4.5) circle (1ex);

\filldraw[black] (5.5,5) circle (1ex);
\filldraw[black] (6.5,2) circle (1ex);
\filldraw[black] (7.5,2) circle (1ex);
\filldraw[black] (9.5,0) circle (1ex);
\put(118,-13){{\small{$H_1$}}}
\put(87,14){{\small{$H_2$}}}
\put(73,14){{\small{$H_3$}}}
\put(52,55){{\small{$H_4$}}}
\put(64,45){{\small{$V_3$}}}
\put(12,67){{\small{$V_4$}}}
\put(-15,105){{\small{$V_2$}}}
\put(-15,122){{\small{$V_1$}}}
\end{tikzpicture}}
\end{picture}
\]    
\end{example}

A \mydef{lattice path} from $H_i$ to $V_i$ in the region $L$ is a path from $H_i$ to $V_i$ in $L$ consisting up and left steps in $L$. We visualize lattice paths in $L$ with tiles
\[\begin{tikzpicture}[scale=0.25]
\begin{scope}[scale=0.7,thick]
\draw[line width = .25ex, blue] (1.5,7) -- (1.5,5.5)--(3,5.5);
\draw (0,4) -- (0,7)--(3,7)--(3,4)--(0,4);
\end{scope}
\begin{scope}[scale=0.7,xshift=15em,thick]
\draw[line width = .25ex, blue] (1.5,4) -- (1.5,5.5)--(0,5.5);
\draw (0,4) -- (0,7)--(3,7)--(3,4)--(0,4);
\end{scope}
\begin{scope}[scale=0.7,xshift=30em,thick]
\draw[line width = .25ex, blue] (1.5,4) -- (1.5,7);
\draw (0,4) -- (0,7)--(3,7)--(3,4)--(0,4);
\end{scope}
\begin{scope}[scale=0.7,xshift=45em,thick]
\draw[line width = .25ex, blue] (0,5.5) -- (3,5.5);
\draw (0,4) -- (0,7)--(3,7)--(3,4)--(0,4);
\end{scope}
\begin{scope}[scale=0.7,xshift=60em,thick]
\draw 
(0,4) -- (0,7)--(3,7)--(3,4)--(0,4);
\end{scope}
\end{tikzpicture}.
\]

 We call the leftmost tile a \mydef{SE-elbow} tile and the rightmost tile a \mydef{blank} tile. For a box ${\sf b}\in L$, let $t({\sf b})$ denote the tile occupying ${\sf b}$.

For a minimal two-sided ladder $(L,{\mathcal M})$ where $L=\lambda/\mu$, define $\nilp{(L,{\mathcal{M}})}$ to be the set of non-intersecting lattice paths $P=(P_1,\ldots,P_{\ell})$ where $P_{i}$ is a lattice path in $L$ from $H_i$ to $V_i$, for $H_i\in H({\mathcal M}),V_i\in V({\mathcal M})$. A path $P_i$ may occupy box $(x,y)\in\mu$ only if:
\begin{enumerate}
    \item $t(x,y)\neq \SEelb$, and 
    \item $t(x+k,y-k)\neq\blank$ \,for any $(x+k,y-k)\in\lambda$ where $k\in\mathbb{Z}_{>0}$.
\end{enumerate}

\begin{example} For $(L,\mathcal{M})$ as in Example~\ref{ex:laddAlgIll}, the leftmost two diagrams are in $\nilp{(L,{\mathcal{M}})}$. The rightmost diagram is not since $P_1$ occupies $(2,9)\in\mu$ but $t(2+1,9-1)=t(3,8)=\blank$.
    \[\begin{picture}(340,80)
\put(0,-15){\begin{tikzpicture}[scale=.3]
\draw[line width = .1ex, gray] (0,9) -- (8,9);
\draw[line width = .1ex, gray] (0,8) -- (8,8);
\draw[line width = .1ex, gray] (0,7) -- (10,7);
\draw[line width = .1ex, gray] (2,6) -- (10,6);
\draw[line width = .1ex, gray] (6,5) -- (10,5);
\draw[line width = .1ex, gray] (6,4) -- (10,4);
\draw[line width = .1ex, gray] (6,3) -- (10,3);
\draw[line width = .1ex, gray] (6,2) -- (10,2);
\draw[line width = .1ex, gray] (8,1) -- (10,1);

\draw[line width = .1ex, gray] (1,10) -- (1,6);
\draw[line width = .1ex, gray] (2,10) -- (2,6);
\draw[line width = .1ex, gray] (3,10) -- (3,5);
\draw[line width = .1ex, gray] (4,10) -- (4,5);
\draw[line width = .1ex, gray] (5,10) -- (5,5);
\draw[line width = .1ex, gray] (6,10) -- (6,5);
\draw[line width = .1ex, gray] (7,10) -- (7,2);
\draw[line width = .1ex, gray] (8,8) -- (8,2);
\draw[line width = .1ex, gray] (9,8) -- (9,0);

\draw[line width = .25ex] (0,6)--(0,10)--(8,10)--(8,8)--(10,8)--(10,0)--(8,0)--(8,2)--(6,2)--(6,5)--(2,5)--(2,6)--(0,6);

\draw[line width = .25ex, blue] (5.5,5)--(5.5,5.5)--(2,5.5);
\draw[line width = .25ex, blue] (6.5,2)--(6.5,4.5)--(6,4.5);
\draw[line width = .25ex, blue] (7.5,2)--(7.5,5.5)--(6.5,5.5)--(6.5,6.5)--(0.5,6.5)--(0.5,8.5)--(0,8.5);
\draw[line width = .25ex, blue] (9.5,0)--(9.5,0.5)--(8.5,0.5)--(8.5,6.5)--(7.5,6.5)--(7.5,7.5)--(1.5,7.5)--(1.5,9.5)--(0,9.5);

\filldraw[black] (0,9.5) circle (1ex);
\filldraw[black] (0,8.5) circle (1ex);
\filldraw[black] (2,5.5) circle (1ex);
\filldraw[black] (6,4.5) circle (1ex);

\filldraw[black] (5.5,5) circle (1ex);
\filldraw[black] (6.5,2) circle (1ex);
\filldraw[black] (7.5,2) circle (1ex);
\filldraw[black] (9.5,0) circle (1ex);

\end{tikzpicture}}
\put(130,-15){\begin{tikzpicture}[scale=.3]
\draw[line width = .1ex, gray] (0,9) -- (8,9);
\draw[line width = .1ex, gray] (0,8) -- (8,8);
\draw[line width = .1ex, gray] (0,7) -- (10,7);
\draw[line width = .1ex, gray] (2,6) -- (10,6);
\draw[line width = .1ex, gray] (6,5) -- (10,5);
\draw[line width = .1ex, gray] (6,4) -- (10,4);
\draw[line width = .1ex, gray] (6,3) -- (10,3);
\draw[line width = .1ex, gray] (6,2) -- (10,2);
\draw[line width = .1ex, gray] (8,1) -- (10,1);

\draw[line width = .1ex, gray] (1,10) -- (1,6);
\draw[line width = .1ex, gray] (2,10) -- (2,6);
\draw[line width = .1ex, gray] (3,10) -- (3,5);
\draw[line width = .1ex, gray] (4,10) -- (4,5);
\draw[line width = .1ex, gray] (5,10) -- (5,5);
\draw[line width = .1ex, gray] (6,10) -- (6,5);
\draw[line width = .1ex, gray] (7,10) -- (7,2);
\draw[line width = .1ex, gray] (8,8) -- (8,2);
\draw[line width = .1ex, gray] (9,8) -- (9,0);

\draw[line width = .25ex] (0,6)--(0,10)--(8,10)--(8,8)--(10,8)--(10,0)--(8,0)--(8,2)--(6,2)--(6,5)--(2,5)--(2,6)--(0,6);

\draw[line width = .25ex, blue] (5.5,5)--(5.5,5.5)--(2,5.5);
\draw[line width = .25ex, blue] (6.5,2)--(6.5,4.5)--(6,4.5);
\draw[line width = .25ex, blue] (7.5,2)--(7.5,5.5)--(6.5,5.5)--(6.5,7.5)--(0.5,7.5)--(0.5,8.5)--(0,8.5);
\draw[line width = .25ex, blue] (9.5,0)--(9.5,7.5)--(9.5,7.5)--(7.5,7.5)--(7.5,9.5)--(0,9.5);

\filldraw[black] (0,9.5) circle (1ex);
\filldraw[black] (0,8.5) circle (1ex);
\filldraw[black] (2,5.5) circle (1ex);
\filldraw[black] (6,4.5) circle (1ex);

\filldraw[black] (5.5,5) circle (1ex);
\filldraw[black] (6.5,2) circle (1ex);
\filldraw[black] (7.5,2) circle (1ex);
\filldraw[black] (9.5,0) circle (1ex);

\end{tikzpicture}}
\put(260,-15){\begin{tikzpicture}[scale=.3]
\draw[line width = .1ex, gray] (0,9) -- (8,9);
\draw[line width = .1ex, gray] (0,8) -- (8,8);
\draw[line width = .1ex, gray] (0,7) -- (10,7);
\draw[line width = .1ex, gray] (2,6) -- (10,6);
\draw[line width = .1ex, gray] (6,5) -- (10,5);
\draw[line width = .1ex, gray] (6,4) -- (10,4);
\draw[line width = .1ex, gray] (6,3) -- (10,3);
\draw[line width = .1ex, gray] (6,2) -- (10,2);
\draw[line width = .1ex, gray] (8,1) -- (10,1);

\draw[line width = .1ex, gray] (1,10) -- (1,6);
\draw[line width = .1ex, gray] (2,10) -- (2,6);
\draw[line width = .1ex, gray] (3,10) -- (3,5);
\draw[line width = .1ex, gray] (4,10) -- (4,5);
\draw[line width = .1ex, gray] (5,10) -- (5,5);
\draw[line width = .1ex, gray] (6,10) -- (6,5);
\draw[line width = .1ex, gray] (7,10) -- (7,2);
\draw[line width = .1ex, gray] (8,8) -- (8,2);
\draw[line width = .1ex, gray] (9,8) -- (9,0);

\draw[line width = .25ex] (0,6)--(0,10)--(8,10)--(8,8)--(10,8)--(10,0)--(8,0)--(8,2)--(6,2)--(6,5)--(2,5)--(2,6)--(0,6);

\draw[line width = .25ex, blue] (5.5,5)--(5.5,5.5)--(2,5.5);
\draw[line width = .25ex, blue] (6.5,2)--(6.5,4.5)--(6,4.5);
\draw[line width = .25ex, blue] (7.5,2)--(7.5,5.5)--(6.5,5.5)--(6.5,6.5)--(0.5,6.5)--(0.5,8.5)--(0,8.5);
\draw[line width = .25ex, blue] (9.5,0)--(9.5,8.5)--(9.5,8.5)--(7.5,8.5)--(7.5,9.5)--(0,9.5);

\filldraw[black] (0,9.5) circle (1ex);
\filldraw[black] (0,8.5) circle (1ex);
\filldraw[black] (2,5.5) circle (1ex);
\filldraw[black] (6,4.5) circle (1ex);

\filldraw[black] (5.5,5) circle (1ex);
\filldraw[black] (6.5,2) circle (1ex);
\filldraw[black] (7.5,2) circle (1ex);
\filldraw[black] (9.5,0) circle (1ex);

\end{tikzpicture}}
\end{picture}
\]    
\end{example}
 
 For $P\in\nilp{(L,{\mathcal{M}})}$, define 
\[\blk(P):=\Big\{ (i,j)\in L \, : \, t(i,j)= \blank\Big\}.\]
Let ${\wt}(L,{\mathcal{M}})=\#L-\#\blk(P)$, where $\#L$ denotes the number of boxes in $L$ and $P\in \nilp{(L,{\mathcal{M}})}$. By the definition of $\nilp{(L,{\mathcal{M}})}$, $\#\blk(P)$ is constant across all $P\in \nilp{(L,{\mathcal{M}})}$. 
For $P\in \nilp{(L,{\mathcal{M}})}$, define the \mydef{unforced elbows} of $P$ as the set
\[\elb(P):=\{ (i,j)\in L \, : \, t(i,j)= \SEelb \, \text{ and } t(i-k,j+k)\in \blk(P) \text{ for some } k\geq0\}.\] 
Define the map 
\begin{align*}
       \psi: \nilp{(L,{\mathcal{M}})}&\rightarrow [n]^2\\
       P&\mapsto \blk(P).
    \end{align*}
For a minimal two-sided ladder $(L,{\mathcal M})$, take $P_{{\tt bot}}(L,{\mathcal M})\in \nilp{(L,{\mathcal{M}})}$ to be the non-intersecting lattice path in which each path lies maximally to the lower left in $L$. 

Ordinary excited Young diagrams naturally biject with non-intersecting lattice paths when $L=\lambda$, as shown by V.~Kreiman \cite{KreimanEYD}. We prove the corresponding  bijection for skew excited Young diagrams.

\begin{proposition}\label{prop:laddToSeyd}
Let $(L,{\mathcal M})$ be a minimal two-sided ladder. 
    Then for $(v,w)=\perm(L,{\mathcal{M}})$,
    \begin{align*}
       \psi: \nilp{(L,{\mathcal{M}})}&\rightarrow \skewexcited(v,w)\\
       P&\mapsto \blk(P)
    \end{align*}
    is a bijective map, where $\psi(P_{{\tt bot}}(L,{\mathcal M}))=\maxexcited(v,w)$.
\end{proposition}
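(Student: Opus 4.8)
The plan is to set up the bijection by transport of structure from a single anchor identity, $\psi(P_{{\tt bot}}(L,{\mathcal M}))=\maxexcited(v,w)$, exploiting that $\skewexcited(v,w)$ is \emph{defined} as the set of diagrams obtained from $\maxexcited(v,w)$ by sequences of excited moves. So I would prove: (i) the anchor identity; (ii) a local dictionary matching a single excited move with a single ``southwest slide'' of one path; and (iii) that $\nilp(L,{\mathcal M})$ is connected under southwest slides with $P_{{\tt bot}}$ its unique minimal element. Given these, $\psi$ is a bijection.

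For (i) I would unwind Construction~\ref{alg:LaddPointCompute}: the labelled boundary points $\{(H_i,V_i)\}_i$ record precisely the jumps in the rank data ${\sf rank}_w(\|{\sf p}_i\|,\|\beta_j\|)$ that defines $w$ via $\perm(L,{\mathcal M})$, and $V(\mathcal M),H(\mathcal M)$ translate the ranks $r_i$ into the sizes of the corresponding staircases. Under the identification $\codePD{v}=\lambda/\mu=L$ (valid up to deleting empty rows and columns, by Proposition~\ref{prop:321-graph}), routing every path as far southwest as possible and deleting the covered cells reproduces, row by row, the earliest subword of ${\sf word}(D(v))$ realising $w$, i.e.\ $D^{NE}(v,w)$; applying $\phi_v$ gives $\blk(P_{{\tt bot}})=\maxexcited(v,w)$. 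This is essentially a careful comparison of two normal forms, one on the pipe side and one on the path side.

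For (ii) I would use the local picture of Kreiman~\cite{KreimanEYD}: a $2\times2$ square of cells whose northeast cell carries a $+$ (a $\blank$ tile) and whose other three cells carry path tiles must, by non-intersection, look like a single $\SEelb$ that can be slid one step southwest, and this slide is exactly the excited move; conversely any excited move available to $\blk(P)$ inside $\codePD{v}$ arises this way, from a unique corner, and $\blk(P')$ is the result of the excited move whenever $P'$ is the slid path, so the slide is a unique lift. The one genuinely new point over the one-sided ($\mu=\emptyset$) case is the behaviour at the inner boundary: conditions (1) and (2) in the definition of $\nilp(L,{\mathcal M})$ are exactly the constraints ensuring that the $\blank$-cells of any $P$ remain inside $\lambda/\mu$ and that no excited move between skew excited Young diagrams is either blocked or spuriously created by the missing corner $\mu$. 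I would check both directions of this by the same $2\times2$-square analysis, now permitting the square to straddle $\partial\mu$, observing that an excited move touching $\mu$ is legal for skew excited Young diagrams precisely when the corresponding slid corner avoids the situations forbidden by (1)--(2).

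For (iii), order $\nilp(L,{\mathcal M})$ by iterated southwest slides and $\skewexcited(v,w)$ by excited moves (with $\maxexcited(v,w)$ minimal). Any $P\ne P_{{\tt bot}}$ has a path that is not maximally southwest, hence admits an inverse slide strictly decreasing $\sum_{(i,j)\in \blk(P)}(i+j)$ while staying in $\nilp(L,{\mathcal M})$, so $P_{{\tt bot}}$ is the unique minimum. Assembling: by (i) and (ii), every excited move out of $\maxexcited(v,w)$ lifts to a slide, so every $D\in\skewexcited(v,w)$ is $\psi(P)$ for some $P$ (surjectivity); by (iii) and (ii), inducting on the excited-move rank of $\blk(P)$, the lift at each step is forced, giving injectivity; and $\psi(P_{{\tt bot}})=\maxexcited(v,w)$ is (i). The main obstacle is (ii) near $\mu$: with a genuine skew shape one must show the two ``forbidden corner'' conditions are simultaneously necessary (so $\psi$ lands in $\skewexcited(v,w)$, not a larger family of diagrams poking outside $\lambda/\mu$) and sufficient (so every excited move lifts to a legal slide), and one must confirm the $\{(H_i,V_i)\}$ labelling of Construction~\ref{alg:LaddPointCompute} yields endpoints for which $\nilp(L,{\mathcal M})$ is nonempty with $P_{{\tt bot}}$ as described; the rest is a direct extension of Kreiman's argument.
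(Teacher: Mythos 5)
Your overall architecture (anchor identity, local dictionary between excited moves and path slides, connectivity of $\nilp(L,{\mathcal M})$ under slides) is reasonable, and steps (ii)--(iii) are essentially an expanded version of what the paper disposes of with the droop/excited-move picture. The genuine gap is step (i), which is where virtually all of the paper's proof lives. You assert that routing each path maximally southwest "reproduces, row by row, the earliest subword of ${\sf word}(D(v))$ realising $w$," calling it a comparison of normal forms, but you give no mechanism connecting the rank conditions ${\sf rank}_w(\|{\sf p}_i\|,\|\beta_j\|)$ that define $w=\perm(L,{\mathcal M})$ (filtered through Construction~\ref{alg:LaddPointCompute}: the augmentation ${\mathcal M}'$, the split into ${\mathcal M}^V_i,{\mathcal M}^H_i$, and the nontrivial rule matching $V_i$ to $H_i$) to the diagram $D^{NE}(v,w)$ determined by the Demazure-product condition. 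The paper avoids ever making this direct comparison: it inducts on $|\mu|$, adjoining the northmost missing box $(a,b+1)$ with $(a+1,b+1)\in L$, relating $(L,{\mathcal M})$ to $(L',{\mathcal M})$ via $v=s_iv'$ and $w\in\{w',s_iw'\}$ according to whether $t(a,b+1)$ is a $\blank$ in $P_{\tt bot}(L',{\mathcal M})$, and bottoms out in the one-sided case, where $v,w$ are Grassmannian and the identity is Kreiman's theorem. Without that reduction, or a worked-out substitute for it, your plan does not prove $\psi(P_{\tt bot}(L,{\mathcal M}))=\maxexcited(v,w)$; you also defer, rather than settle, the nonemptiness of $\nilp(L,{\mathcal M})$ and the well-definedness of $P_{\tt bot}$, which your whole scheme presupposes.

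A secondary, concrete error: your monovariant in (iii) does not move. An excited move (equivalently, a slide of a path across a $2\times2$ block) displaces a plus, i.e.\ a blank of $P$, anti-diagonally, from $(i,j)$ to $(i+1,j-1)$ in the paper's matrix indexing (compare the middle diagram of Example~\ref{ex:SEYDexSetup} with the first diagram of Example~\ref{ex:SEYDex}), so $\sum_{(i,j)\in \blk(P)}(i+j)$ is \emph{constant} along slides rather than strictly decreasing; you should use $\sum_{(i,j)\in\blk(P)} i$ (or the column sum) instead. Relatedly, the claim that a path which is not maximally southwest admits a single legal slide while remaining in $\nilp(L,{\mathcal M})$ (non-intersection, and conditions (1)--(2) along $\partial\mu$) needs an actual argument, e.g.\ by choosing the southeast-most offending corner of the lowest such path. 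These items are fixable, but the missing proof of the anchor identity is the essential defect.
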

\begin{proof}
We first show $\psi(P_{{\tt bot}}(L,{\mathcal M}))=\maxexcited(v,w)$.
Suppose $L=\lambda/\mu$. Without loss of generality, assume $\lambda,\mu$ are such that connected components in $\lambda/\mu$ share corners.

We proceed by induction on $|\mu|$. 
When $|\mu|=0$, $L$ is a one-sided ladder. This implies $v,w$ are Grassmannian, as proven in \cite[Theorem~4.7.3]{GM}. This case is proven in \cite[Section~5]{KreimanEYD}. See \cite{MPP} and \cite[Section~7.3]{Weigandt.BPD} for additional discussion.

Suppose the result holds for $|\mu|=k-1\geq0$. Consider $L=\lambda/\mu$ such that $|\mu|=k$. Take $(a,b)\in L$ to the topmost box in $L$ such that $(a+1,b+1)\in L$ and $(a,b+1)\not\in L$. 
Let $L':=L\cup(a,b+1)$. 

By the inductive assumption since $(L',{\mathcal{M}})$ is also minimal, the result holds for $(L',{\mathcal{M}})$.
Let $(v',w')=\perm(L',\mathcal{M})$. 
 Since $L'=L\cup(a,b+1)$, this determines 
\begin{equation}\label{eq:LswChange}
    (L')^{\sf{NE}}=L^{\sf{NE}}-\{(a-1,b),(a,b+1)\}\cup \{(a-1,b+1)\}. 
\end{equation}
Then by definition of $v$ and choice of $(a,b)$, $v=s_iv'$, where $i={\sf word}((a,b+1))$ in $D(v')$.

Suppose $t(a,b+1)=\blank$ in $P_{{\tt bot}}(L',{\mathcal M})$. 
Using the inductive assumption, we know that $(a,b+1)\in\maxexcited(v',w')$. By the definition of $\perm(L,\mathcal{M})$ and Equation~\eqref{eq:LswChange}, it is straightforward to check through case analysis on $L^{\sf{NE}}\cap\{(a-1,b),(a,b+1)\}$ that $w=s_iw'$. 
Thus $\maxexcited(v,w)=\maxexcited(v',w')-\{(a,b+1)\}$.
Since 
\[\blk(P_{{\tt bot}}(L,{\mathcal M}))=\blk(P_{{\tt bot}}(L',{\mathcal M}))-\{(a,b+1)\},\] 
we find $\psi(P_{{\tt bot}}(L,{\mathcal M}))=\maxexcited(v,w)$.

Otherwise $t(a,b+1)\neq \blank$ in $P_{{\tt bot}}(L',{\mathcal M})$. 
By the inductive assumption, we know that $(a,b+1)\not\in\maxexcited(v',w')$. By the definition of $\perm(L,\mathcal{M})$ and Equation~\eqref{eq:LswChange}, it is straightforward to check through case analysis on $L^{\sf{NE}}\cap\{(a-1,b),(a,b+1)\}$ that $w=w'$. 
Thus $\maxexcited(v,w)=\maxexcited(v',w')$. Since $\blk(P_{{\tt bot}}(L,{\mathcal M}))=\blk(P_{{\tt bot}}(L',{\mathcal M}))$, we have $\psi(P_{{\tt bot}}(L,{\mathcal M}))=\maxexcited(v,w)$.
 
    We see excited moves and droops of lattice paths biject with each other:
\[
    \ytableausetup{boxsize=1.4em}
\begin{ytableau}
\, & + \\
& \ 
\end{ytableau}
\hspace{1em}
 \raisebox{-.2em}{$\mapsto$}
 \hspace{1em}
\begin{ytableau}
\ &\\
+ &
\end{ytableau} 
\hspace{3em}
 \raisebox{-1.5em}{\begin{tikzpicture}[scale=0.15]
\begin{scope}
\draw[line width = .2ex, blue] (2,6) -- (2,2)--(6,2);
\draw[dotted,line width = .2ex, blue] (2,8) -- (2,6);
\draw[dotted,line width = .2ex, blue] (0,6) -- (2,6);
\draw[dotted,line width = .2ex, blue] (8,2) -- (6,2);
\draw[dotted,line width = .2ex, blue] (6,0) -- (6,2);
\draw (0,8) -- (8,8)-- (8,0)-- (0,0)-- (0,8);
\draw (4,8) -- (4,0);
\draw (8,4) -- (0,4);
\end{scope}
\end{tikzpicture}}
\hspace{1em}
\raisebox{-.2em}{$\mapsto$}
\hspace{1em}
 \raisebox{-1.5em}{\begin{tikzpicture}[scale=0.15]
\begin{scope}
\draw[line width = .2ex, blue] (2,6)--(6,6)--(6,2);
\draw[dotted,line width = .2ex, blue] (2,8) -- (2,6);
\draw[dotted,line width = .2ex, blue] (0,6) -- (2,6);
\draw[dotted,line width = .2ex, blue] (8,2) -- (6,2);
\draw[dotted,line width = .2ex, blue] (6,0) -- (6,2);
\draw (0,8) -- (8,8)-- (8,0)-- (0,0)-- (0,8);
\draw (4,8) -- (4,0);
\draw (8,4) -- (0,4);
\end{scope}
\end{tikzpicture}} \ .
\]
Therefore since $\mathcal{R}_v=\lambda/\mu$, the result follows.
\end{proof}

Then for $(v,w)=\perm(L,{\mathcal{M}})$, define $P_{{\tt zip}}(L,{\mathcal M}):=\psi^{-1}(D_{\tt zip}(v,w))$.
 
\begin{example}\label{ex:NILPtoSEYD}
Take $(L,\mathcal{M})$ as in Example~\ref{ex:laddAlgIll}. To the left are $P_{{\tt bot}}(L,{\mathcal M})$ and $\maxexcited(v,w)$. 
To the right we have $P_{{\tt zip}}(L,{\mathcal M})$ and $\Kzip(v,w)$ where pluses in $\Kzip(v,w)-\zip(v,w)$ are drawn in bold blue. Note that $\elb(P_{{\tt zip}}(L,{\mathcal M}))$ coincides with $\Kzip(v,w)-\zip(v,w)$.

\[\begin{picture}(450,70)
\put(0,-15){\begin{tikzpicture}[scale=.3]
\draw[line width = .1ex, gray] (0,9) -- (8,9);
\draw[line width = .1ex, gray] (0,8) -- (8,8);
\draw[line width = .1ex, gray] (0,7) -- (10,7);
\draw[line width = .1ex, gray] (2,6) -- (10,6);
\draw[line width = .1ex, gray] (6,5) -- (10,5);
\draw[line width = .1ex, gray] (6,4) -- (10,4);
\draw[line width = .1ex, gray] (6,3) -- (10,3);
\draw[line width = .1ex, gray] (6,2) -- (10,2);
\draw[line width = .1ex, gray] (8,1) -- (10,1);

\draw[line width = .1ex, gray] (1,10) -- (1,6);
\draw[line width = .1ex, gray] (2,10) -- (2,6);
\draw[line width = .1ex, gray] (3,10) -- (3,5);
\draw[line width = .1ex, gray] (4,10) -- (4,5);
\draw[line width = .1ex, gray] (5,10) -- (5,5);
\draw[line width = .1ex, gray] (6,10) -- (6,5);
\draw[line width = .1ex, gray] (7,10) -- (7,2);
\draw[line width = .1ex, gray] (8,8) -- (8,2);
\draw[line width = .1ex, gray] (9,8) -- (9,0);

\draw[line width = .25ex] (0,6)--(0,10)--(8,10)--(8,8)--(10,8)--(10,0)--(8,0)--(8,2)--(6,2)--(6,5)--(2,5)--(2,6)--(0,6);

\draw[line width = .25ex, blue] (5.5,5)--(5.5,5.5)--(2,5.5);
\draw[line width = .25ex, blue] (6.5,2)--(6.5,4.5)--(6,4.5);
\draw[line width = .25ex, blue] (7.5,2)--(7.5,5.5)--(6.5,5.5)--(6.5,6.5)--(0.5,6.5)--(0.5,8.5)--(0,8.5);
\draw[line width = .25ex, blue] (9.5,0)--(9.5,0.5)--(8.5,0.5)--(8.5,6.5)--(7.5,6.5)--(7.5,7.5)--(1.5,7.5)--(1.5,9.5)--(0,9.5);

\filldraw[black] (0,9.5) circle (1ex);
\filldraw[black] (0,8.5) circle (1ex);
\filldraw[black] (2,5.5) circle (1ex);
\filldraw[black] (6,4.5) circle (1ex);

\filldraw[black] (5.5,5) circle (1ex);
\filldraw[black] (6.5,2) circle (1ex);
\filldraw[black] (7.5,2) circle (1ex);
\filldraw[black] (9.5,0) circle (1ex);

\end{tikzpicture}}
\put(105,64){\ytableausetup
{boxsize=0.7em}
{\begin{ytableau}
  \ &  \ &  + &  + & + &  + &  + &  + \\
  \ &  \ &  + &  + & + &  + &  + &  + \\
\ &  \ & \ &  \ & \ &  \ &  \ &   \ &  + &  +\\
\ &  \ &  \ &  \ & \ &  \ &  \ &  \ &   \ &  +\\
 \none &  \none &  \ &  \ & \ &  \ &   \ &   \ &   \ &  + \\
 \none &  \none &  \none &  \none &  \none &  \none &   \ &  \ &  \ &  +\\
 \none &  \none &  \none &  \none &  \none &  \none &   \ &  \ &  \ &  +\\
 \none &  \none &  \none &  \none &  \none &  \none &   \ &  \ & \ &  +\\
 \none &  \none &  \none &  \none &  \none &  \none &   \none &  \none &  \ &  +\\
 \none &  \none &  \none &  \none &  \none &  \none &   \none &  \none &  \ &  \ \\
\end{ytableau}}}
\put(240,-15){\begin{tikzpicture}[scale=.3]
\draw[line width = .1ex, gray] (0,9) -- (8,9);
\draw[line width = .1ex, gray] (0,8) -- (8,8);
\draw[line width = .1ex, gray] (0,7) -- (10,7);
\draw[line width = .1ex, gray] (2,6) -- (10,6);
\draw[line width = .1ex, gray] (6,5) -- (10,5);
\draw[line width = .1ex, gray] (6,4) -- (10,4);
\draw[line width = .1ex, gray] (6,3) -- (10,3);
\draw[line width = .1ex, gray] (6,2) -- (10,2);
\draw[line width = .1ex, gray] (8,1) -- (10,1);

\draw[line width = .1ex, gray] (1,10) -- (1,6);
\draw[line width = .1ex, gray] (2,10) -- (2,6);
\draw[line width = .1ex, gray] (3,10) -- (3,5);
\draw[line width = .1ex, gray] (4,10) -- (4,5);
\draw[line width = .1ex, gray] (5,10) -- (5,5);
\draw[line width = .1ex, gray] (6,10) -- (6,5);
\draw[line width = .1ex, gray] (7,10) -- (7,2);
\draw[line width = .1ex, gray] (8,8) -- (8,2);
\draw[line width = .1ex, gray] (9,8) -- (9,0);

\draw[line width = .25ex] (0,6)--(0,10)--(8,10)--(8,8)--(10,8)--(10,0)--(8,0)--(8,2)--(6,2)--(6,5)--(2,5)--(2,6)--(0,6);

\draw[line width = .25ex, blue] (5.5,5)--(5.5,5.5)--(2,5.5);
\draw[line width = .25ex, blue] (6.5,2)--(6.5,4.5)--(6,4.5);
\draw[line width = .25ex, blue] (7.5,2)--(7.5,5.5)--(6.5,5.5)--(6.5,6.5)--(1.5,6.5)--(1.5,7.5)--(0.5,7.5)--(0.5,8.5)--(0,8.5);
\draw[line width = .25ex, blue] (9.5,0)--(9.5,0.5)--(8.5,0.5)--(8.5,6.5)--(7.5,6.5)--(7.5,7.5)--(2.5,7.5)--(2.5,8.5)--(1.5,8.5)--(1.5,9.5)--(0,9.5);

\filldraw[black] (0,9.5) circle (1ex);
\filldraw[black] (0,8.5) circle (1ex);
\filldraw[black] (2,5.5) circle (1ex);
\filldraw[black] (6,4.5) circle (1ex);

\filldraw[black] (5.5,5) circle (1ex);
\filldraw[black] (6.5,2) circle (1ex);
\filldraw[black] (7.5,2) circle (1ex);
\filldraw[black] (9.5,0) circle (1ex);
\end{tikzpicture}}
\put(345,64){\ytableausetup
{boxsize=0.7em}
{\begin{ytableau}
  \ &  \ &  + &  + & + &  + &  + &  + \\
  \ &  \textcolor{blue}{\Plus} &  \ &  + & + &  + &  + &  + \\
\textcolor{blue}{\Plus} &  \ & \textcolor{blue}{\Plus} &  \ & \ &  \ &  \ &   \ &  + &  +\\
+ &  \textcolor{blue}{\Plus} &  \ &  \ & \ &  \ &  \ &  \textcolor{blue}{\Plus} &   \ &  +\\
 \none &  \none &  \ &  \ & \ &  \ &   \textcolor{blue}{\Plus} &   \ &   \ &  + \\
 \none &  \none &  \none &  \none &  \none &  \none &   \ &  \ &  \ &  + \\
 \none &  \none &  \none &  \none &  \none &  \none &   \ &  \ &  \ &  + \\
 \none &  \none &  \none &  \none &  \none &  \none &   \ &  \ & \ &  + \\
 \none &  \none &  \none &  \none &  \none &  \none &   \none &  \none &  \ &  + \\
 \none &  \none &  \none &  \none &  \none &  \none &   \none &  \none &  \textcolor{blue}{\Plus} &  \ \\
\end{ytableau}}}
\end{picture}\]
\end{example}

\begin{lemma}\label{lem:mdDiagSame}
Let $(L,{\mathcal M})$ be a minimal two-sided ladder. Suppose $\perm(L,\mathcal{M})=(v,w)$. Then $v,w\in \avoid{n}$ where $v,w$ are {\UD} such that  $\gap({\sf b},{\sf c})\in\{ 0,\infty\}$ for ${\sf b},{\sf c}\in \maxexcited{(v,w)}$.
\end{lemma}
\begin{proof}
Consider the partition $\maxexcited{(v,w)}=\sqcup_{q\in[m]}C_q$ into connected components.
Define $w^{(q)}=\delta(\phi_v^{-1}(C_q))$ 
for $q\in[m]$. 
Then $w^{(q)}\in \avoid{n}$ by Proposition~\ref{prop:321-graph}.
    We claim the following: 
    \begin{equation}\label{eq:seydDecomp}
        \skewexcited(v,w)=\skewexcited(v,w^{(1)})\times \skewexcited(v,w^{(2)}) \times\dots\times \skewexcited(v,w^{(m)}).
    \end{equation}
If this fails, there exists $(x,y)\in C_q$ and $(x,y+k)\in C_{q+1}$ for some $q\in[m-1]$, $k>1$ such that $(x,y+k-1)\not\in \maxexcited{(v,w)}$. By Proposition~\ref{prop:laddToSeyd},
$\psi(P_{{\tt bot}}(L,{\mathcal M}))=\maxexcited(v,w)$, so in $P_{{\tt bot}}(L,{\mathcal M})$, $t(x,y+k-1)\neq\blank$. Since $\maxexcited{(v,w)}$ is upper rightmost, this implies that some path $P_j$ in $P_{{\tt bot}}(L,{\mathcal M})$ occupies tile $(x,y+k-1)$. This forces $P_j$ to pass above $C_q$, violating condition (2) for lattice paths, a contradiction. This proves Equation~\eqref{eq:seydDecomp}.

We obtain a similar contradiction if there exists $(x,y)\in C_q$ and $(x+k,y)\in C_{q+1}$ for some $q\in[m-1]$, $k>1$ such that $(x+k-1,y)\not\in \maxexcited{(v,w)}$. This implies condition (i) of {\UD}. Condition (ii) and the fact that $\gap({\sf b},{\sf c})\in\{ 0,\infty\}$ follows by Equation~\eqref{eq:seydDecomp}.
\end{proof}

This gives the following:
\begin{corollary}\label{cor:laddReg}
Suppose $(L,{\mathcal M})$ is a minimal two-sided ladder where $\perm(L,\mathcal{M})=(v,w)$. Then the following hold:
    \begin{align*}
         \reg (X(L,\mathcal{M}))&=\#\Kzip(v,w)-\ell(w)=\sum_{i=0}^\ell\sum_{{\sf d}\in \Lambda^{(i)}}\Tab{v,w}^{(i)}({\sf d})\text{, and}\\
          a(X(L,\mathcal{M}))&=\#\Kzip(v,w)-\ell(v)=\Big(\sum_{i=0}^\ell\sum_{{\sf d}\in \Lambda^{(i)}}\Tab{v,w}^{(i)}({\sf d})\Big)-\ell(v)+\ell(w).
    \end{align*}
\end{corollary}    
    \begin{proof}
By Theorem~\ref{thm:321321IndReg} and Corollary~\ref{cor:321321IndAInv} combined with Proposition~\ref{prop:2sidedKL} and Lemma~\ref{lem:mdDiagSame}.
    \end{proof}

In \cite[Lemma~14]{GK15} S.~R.~Ghorpade and C.~Krattenthaler give an algorithm to compute $a(X(L,\mathcal{M}))$ for two-sided ladders generated by $k$-minors with additional marked points $({\sf p}, r)$, where $r\in[k]$ and ${\sf p}$ lies on the topmost vertical edge or rightmost horizontal edge of $L$. 
This algorithm computes $a(X(L,\mathcal{M}))$ by maximizing $\#\elb(P)$ for $P\in \nilp{(L,{\mathcal{M}})}$. We extend this to all minimal two-sided ladders:

\begin{corollary}\label{thm:regAnvLaddElb}
    Let  $(L,{\mathcal M})$ be a minimal two-sided ladder. Then 
    \begin{align*}
         \reg (X(L,\mathcal{M}))&=\#\elb(P_{{\tt zip}}(L,{\mathcal M})) \text{, and}\\
          a(X(L,\mathcal{M}))&=\#\elb(P_{{\tt zip}}(L,{\mathcal M}))-{\wt}(L,\mathcal{M}).
    \end{align*}
\end{corollary}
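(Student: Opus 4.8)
The plan is to translate the formula of Corollary~\ref{cor:laddReg}, which expresses $\reg(X(L,\mathcal{M}))$ as $\#\Kzip(v,w)-\ell(w)$, into the language of lattice paths via the bijection $\psi$ of Proposition~\ref{prop:laddToSeyd}. The key identity to establish is
\[
\#\Kzip(v,w)-\ell(w) = \#\elb(P_{{\tt zip}}(L,{\mathcal M})),
\]
where $P_{{\tt zip}}(L,{\mathcal M}) = \psi^{-1}(D_{\tt zip}(v,w))$. Since $\#\zip(v,w) = \ell(w)$ by Construction~\ref{alg:main}, the left side counts exactly the pluses in $\Kzip(v,w)-\zip(v,w)$, i.e., the pluses added by the K-theoretic excited moves applied to the boxes of $\mdCR{C_q}$. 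So it suffices to show that the cells carrying these new pluses are in bijection with the unforced elbows $\elb(P_{{\tt zip}}(L,{\mathcal M}))$.

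First I would use Lemma~\ref{lem:mdDiagSame} to replace $\mdCR{C_q}$ with ${\sf Diag}_{v,w}(C_q)$ throughout, so that in this ladder setting the distinguished diagonal is the canonical westmost-then-southmost one in each component. Next I would unwind the definition of $\room{v,w}({\sf b})$ from Construction~\ref{alg:main}: for ${\sf b}\in{\sf Diag}_{v,w}(C_q)$, $\room{v,w}({\sf b})$ counts the number of successive southwest-diagonal ``rooms'' available below ${\sf b}$ in $\codePD{v}-D_{\tt zip}(v,w)$, and each unit of room corresponds to exactly one new plus inserted by a K-theoretic excited move (this is the content of Theorem~\ref{thm:321321IndDeg}, which gives $\#\Kzip(v,w) = \ell(w) + \sum_q\sum_{{\sf b}\in\mdCR{C_q}}\room{v,w}({\sf b})$). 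On the lattice-path side, I would show that under $\psi$ a blank tile of $P_{{\tt zip}}(L,{\mathcal M})$ corresponds to a cell of $D_{\tt zip}(v,w)$ and a SE-elbow tile corresponds to an empty cell; then the condition ``$t(i-k,j+k)\in\blk(P)$ for some $k\ge 0$'' in the definition of $\elb(P)$ matches precisely the southwest-diagonal ``room'' condition defining $\room{v,w}$. Carefully matching the two diagonal-scanning conventions — noting that $\psi$ reflects English-notation Young diagrams across the $y$-axis, per the stated conventions — gives the bijection and hence the regularity formula. For the $a$-invariant formula, I would combine the regularity identity with $a(X(L,\mathcal{M})) = \#\Kzip(v,w)-\ell(v)$ from Corollary~\ref{cor:laddReg}, and observe that $\ell(v) = \#\codePD{v}$ equals $\#L$ (since $\codePD{v} = \lambda/\mu = L$ by construction of $v$), while $\ell(w)$ equals $\#L - {\wt}(L,\mathcal{M}) + \ell(w) - \#\zip(v,w)$... more cleanly: from $\reg - a = \ell(v)-\ell(w) = \#L - \#\blk(P) = {\wt}(L,\mathcal{M})$, using that $\ell(w) = \#\zip(v,w) = \#\maxexcited(v,w) = \#\blk(P_{{\tt bot}})$ and $\#\blk(P)$ is constant across $\nilp(L,\mathcal{M})$; subtracting gives $a(X(L,\mathcal{M})) = \#\elb(P_{{\tt zip}}(L,{\mathcal M})) - {\wt}(L,\mathcal{M})$.

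The main obstacle I anticipate is the bookkeeping in the correspondence between the ``room'' count $\room{v,w}({\sf b})$ and the unforced-elbow count: one must verify that the sequence of K-theoretic excited moves applied to a diagonal ${\sf Diag}_{v,w}(C_q)$ produces new pluses in exactly the cells that, pulled back through $\psi$, become SE-elbow tiles satisfying the diagonal visibility condition to a blank — and that no double-counting occurs across different components $C_q$. This requires a careful local analysis of how a K-theoretic excited move $\begin{ytableau}\ & +\\ \ & \end{ytableau} \mapsto \begin{ytableau}\, & +\\ +& \ \end{ytableau}$ interacts with the droop/elbow dictionary established at the end of the proof of Proposition~\ref{prop:laddToSeyd}, together with the fact (from Lemma~\ref{lem:mdDiagSame}) that $\skewexcited(v,w)$ factors as a product over components, so the elbow count is additive over components and the local picture suffices.
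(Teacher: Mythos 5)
Your proposal is correct and follows essentially the same route as the paper: transfer the formulas of Corollary~\ref{cor:laddReg} through the bijection $\psi$ of Proposition~\ref{prop:laddToSeyd}, identify $\Kzip(v,w)-\zip(v,w)$ with $\elb(P_{{\tt zip}}(L,{\mathcal M}))$, and use $\ell(w)=\#\zip(v,w)$ together with ${\wt}(L,\mathcal{M})=\ell(v)-\ell(w)$ for the $a$-invariant. The only difference is that you spell out the room-count/elbow bookkeeping that the paper dismisses as straightforward from the definitions.
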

\begin{proof}

    From Proposition~\ref{prop:laddToSeyd} and the definition of $\elb(P)$ where $P\in \nilp{(L,{\mathcal{M}})}$, it is straightforward to see
    \[\elb(P_{{\tt zip}}(L,{\mathcal M}))=\elb(\psi^{-1}(D_{\tt zip}(v,w)))=D_{\tt zip}^K(v,w)-D_{\tt zip}(v,w).\]
    Since $\ell(w)=\#D_{\tt zip}(v,w)$, the first result follows. By combining this with Corollary~\ref{cor:laddReg} and the fact that ${\wt}(L,\mathcal{M})=\ell(v)-\ell(w)$, the second result follows.
\end{proof}

\begin{example}\label{ex:laddThmIll}
Let $(L,\mathcal{M})$ be as in Example~\ref{ex:NILPtoSEYD}. 
By Corollary~\ref{thm:regAnvLaddElb}, 
\begin{center}
    $\reg (X(L,\mathcal{M}))=\#\elb(P_{{\tt zip}}(L,{\mathcal M}))=7$, and\\
    $a(X(L,\mathcal{M}))=\#\elb(P_{{\tt zip}}(L,{\mathcal M}))-{\wt}(L,{\mathcal{M}})=7-(60-20)=-33$.
\end{center}
\vspace*{-0.5cm}
\end{example}

In general, the lattice path constructed for Corollary~\ref{thm:regAnvLaddElb} differs from the outputted lattice path in \cite[Lemma~14]{GK15} that maximizes unforced elbows. Applying Construction~\ref{alg:LaddPointCompute}, it is straightforward to extend \cite{GK15} to all minimal two-sided ladders.

\section*{Acknowledgements}
The author would like to thank Elisa Gorla, Patricia Klein, Jenna Rajchgot, Ada Stelzer, Anna Weigandt, and Alexander Yong for helpful comments and conversations. We would also like to thank the anonymous reviewer who noted errors in the earlier version of this manuscript.

\bibliographystyle{plainurl}
\bibliography{2sidedLadd}

\begin{thebibliography}{10}

\bibitem{Abhyankar}
Shreeram~S. Abhyankar.
\newblock {\em Enumerative combinatorics of {Y}oung tableaux}, volume 115 of
  {\em Monographs and Textbooks in Pure and Applied Mathematics}.
\newblock Marcel Dekker, Inc., New York, 1988.

\bibitem{Benedetti.Varbaro}
Bruno Benedetti and Matteo Varbaro.
\newblock On the dual graphs of {C}ohen-{M}acaulay algebras.
\newblock {\em Int. Math. Res. Not. IMRN}, 2015(17):8085--8115, 2015.

\bibitem{BergeronBilley}
Nantel Bergeron and Sara Billey.
\newblock R{C}-graphs and {S}chubert polynomials.
\newblock {\em Experiment. Math.}, 2(4):257--269, 1993.

\bibitem{BH92}
Winfried Bruns and J\"urgen Herzog.
\newblock On the computation of a-invariants.
\newblock {\em Manuscripta Math.}, 77:201--213, 12 1992.

\bibitem{ConcaAinv}
Aldo Conca.
\newblock The a-invariant of determinantal ideals.
\newblock {\em Math. J. Toyama Univ.}, 18:47--–63, 1995.

\bibitem{Conca}
Aldo Conca.
\newblock Ladder determinantal rings.
\newblock {\em J. Pure Appl. Algebra}, 98(2):119--134, 1995.

\bibitem{DMS}
Matt Dreyer, Karola Me\'sz\'aros, and Avery St.~Dizier.
\newblock On the degree of {G}rothendieck polynomials.
\newblock {\em Algebr. Comb.}, 7(3):627--658, 2024.

\bibitem{EFRW}
Laura Escobar, Alex Fink, Jenna Rajchgot, and Alexander Woo.
\newblock {2-sided mixed ladder determinantal ideals via Schubert varieties}.
\newblock In preparation.

\bibitem{Fulton.Flags}
William Fulton.
\newblock Flags, {S}chubert polynomials, degeneracy loci, and determinantal
  formulas.
\newblock {\em Duke Math. J.}, 65(3):381--420, 1992.

\bibitem{Gao}
Yibo Gao.
\newblock Principal specializations of {S}chubert polynomials and pattern
  containment.
\newblock {\em Eur. J. Comb.}, 94:103291, 2021.

\bibitem{GK15}
Sudhir~R. Ghorpade and Christian Krattenthaler.
\newblock Computation of the {$a$}-invariant of ladder determinantal rings.
\newblock {\em J. Algebra Appl.}, 14(9):1540014, 24, 2015.

\bibitem{GM}
Nicolae Gonciulea and Claudia Miller.
\newblock Mixed ladder determinantal varieties.
\newblock {\em J. Algebra}, 231(1):104--137, 2000.

\bibitem{GotoWat}
Shiro Goto and Keiichi Watanabe.
\newblock On graded rings. {I}.
\newblock {\em J. Math. Soc. Japan}, 30(2):179--213, 1978.

\bibitem{Graham.Kreiman}
William Graham and Victor Kreiman.
\newblock Excited {Y}oung diagrams, equivariant {$K$}-theory, and {S}chubert
  varieties.
\newblock {\em Trans. Amer. Math. Soc.}, 367(9):6597--6645, 2015.

\bibitem{Hafner}
Elena Hafner.
\newblock {Vexillary Grothendieck Polynomials via Bumpless Pipe Dreams}, 2022.
\newblock \href {http://arxiv.org/abs/2201.12432} {\path{arXiv:2201.12432}}.

\bibitem{HT}
J\"urgen Herzog and Ngô~Viêt Trung.
\newblock Gr\"{o}bner bases and multiplicity of determinantal and {P}faffian
  ideals.
\newblock {\em Adv. Math.}, 96(1):1--37, 1992.

\bibitem{IN}
Takeshi Ikeda and Hiroshi Naruse.
\newblock Excited {Y}oung diagrams and equivariant {S}chubert calculus.
\newblock {\em Trans. Amer. Math. Soc.}, 361(10):5193--5221, 2009.

\bibitem{KazhdanLusztig}
David Kazhdan and George Lusztig.
\newblock Representations of {C}oxeter groups and {H}ecke algebras.
\newblock {\em Invent. Math.}, 53(2):165--184, 1979.

\bibitem{Knutson-Frob}
Allen Knutson.
\newblock Frobenius splitting, point-counting, and degeneration, 2009.
\newblock \href {http://arxiv.org/abs/0911.4941} {\path{arXiv:0911.4941}}.

\bibitem{KM}
Allen Knutson and Ezra Miller.
\newblock Gr\"obner geometry of {S}chubert polynomials.
\newblock {\em Ann. of Math.}, 161(3):1245--1318, 2005.

\bibitem{KMY}
Allen Knutson, Ezra Miller, and Alexander Yong.
\newblock Gr\"obner geometry of vertex decompositions and of flagged tableaux.
\newblock {\em J. Reine Angew. Math.}, 630:1--31, 2009.

\bibitem{KreimanEYD}
Victor Kreiman.
\newblock {Schubert Classes in the Equivariant K-Theory and Equivariant
  Cohomology of the Grassmannian}, 2006.
\newblock arXiv:math/0602245.

\bibitem{LS82}
Alain Lascoux and Marcel-Paul Sch\"{u}tzenberger.
\newblock Structure de {H}opf de l'anneau de cohomologie et de l'anneau de
  {G}rothendieck d’une vari\'et\'e de drapeaux.
\newblock {\em C.~R.~Acad.~Sci. Paris S\'{e}r.~I Math.}, 295(11):629--633,
  1982.

\bibitem{Manivel}
Laurent Manivel.
\newblock {\em Symmetric functions, {S}chubert polynomials and degeneracy
  loci}.
\newblock SMF/AMS Texts and Monographs. American Mathematical Society,
  Providence, 2001.
\newblock Translated from the 1998 French original by John R. Swallow.

\bibitem{MPP}
Alejandro~H. Morales, Igor Pak, and Greta Panova.
\newblock Hook formulas for skew shapes {I}. {$q$}-analogues and bijections.
\newblock {\em J. Combin. Theory Ser. A}, 154:350--405, 2018.

\bibitem{Neye}
Emmanuel Neye.
\newblock A {G}r\"{o}bner basis for {S}chubert patch ideals.
\newblock {\em J. Algebra}, 634:165--208, 2023.

\bibitem{PanYu}
Jianping Pan and Tianyi Yu.
\newblock Top-degree components of {G}rothendieck and {L}ascoux polynomials.
\newblock {\em Algebr. Comb.}, 7(1):109--135, 2024.

\bibitem{PSW}
Oliver Pechenik, David~E Speyer, and Anna Weigandt.
\newblock Castelnuovo-{M}umford regularity of matrix {S}chubert varieties.
\newblock {\em Selecta Math. (N.S.)}, 30(4):Paper No. 66, 44, 2024.

\bibitem{RRRSW}
Jenna Rajchgot, Yi~Ren, Colleen Robichaux, Avery St.~Dizier, and Anna Weigandt.
\newblock Degrees of symmetric {G}rothendieck polynomials and
  {C}astelnuovo-{M}umford regularity.
\newblock {\em Proc. Amer. Math. Soc.}, 149(4):1405--1416, 2021.

\bibitem{RRW}
Jenna Rajchgot, Colleen Robichaux, and Anna Weigandt.
\newblock {Castelnuovo-Mumford regularity of ladder determinantal varieties and
  patches of Grassmannian Schubert varieties}.
\newblock {\em J. Algebra}, 617:160--191, 2023.

\bibitem{Ramanathan}
Annamalai Ramanathan.
\newblock Schubert varieties are arithmetically {C}ohen-{M}acaulay.
\newblock {\em Invent. Math.}, 80(2):283--294, 1985.

\bibitem{StelzerYong}
Ada Stelzer and Alexander Yong.
\newblock {Schubert determinantal ideals are Hilbertian}.
\newblock {\em J. Algebra}, 677:278--293, 2025.

\bibitem{Weigandt.BPD}
Anna Weigandt.
\newblock Bumpless pipe dreams and alternating sign matrices.
\newblock {\em J. Combin. Theory Ser. A}, 182:Paper No. 105470, 52, 2021.

\bibitem{WooYongSings}
Alexander Woo and Alexander Yong.
\newblock Governing singularities of {S}chubert varieties.
\newblock {\em J. Algebra}, 320(2):495--520, 2008.

\bibitem{WooYongGrobner}
Alexander Woo and Alexander Yong.
\newblock A {G}r\"obner basis for {K}azhdan-{L}usztig ideals.
\newblock {\em Amer. J. Math.}, 134(4):1089--1137, 2012.

\bibitem{YongCM}
Alexander Yong.
\newblock {Castelnuovo-Mumford regularity and Schubert geometry}, 2022.
\newblock \href {http://arxiv.org/abs/2202.06362} {\path{arXiv:2202.06362}}.

\end{thebibliography}

\end{document}